\documentclass[11pt,reqno]{amsart}
\usepackage[pagewise]{lineno}
\usepackage{pgfplots}
\pgfplotsset{compat=1.18}
\usetikzlibrary{patterns}

\usepackage{amsmath}
\usepackage{mathrsfs}
\usepackage{amsfonts}
\usepackage{amssymb}
\usepackage[pagewise]{lineno}
\usepackage{url}
\theoremstyle{plain}
\newtheorem{athm}{Theorem}

\newtheorem{theorem}{Theorem}[section]
\newtheorem{proposition}[theorem]{Proposition}
\newtheorem{corollary}[theorem]{Corollary}
\newtheorem{lemma}[theorem]{Lemma}

\theoremstyle{definition}

\newtheorem{example}{Example}[section]
\newtheorem{definition}[theorem]{Definition}

\newtheorem{remark}[theorem]{Remark}

\DeclareMathOperator{\esssup}{ess \ sup}

\DeclareMathOperator{\ho}{H}

\input{tcilatex}

\begin{document}

\title[Stability and limit theorems in RDS]{Stability and limit theorems in random dynamical systems}

\author[Davi Lima]{Davi Lima}
\author[Rafael Lucena]{Rafael Lucena}

\date{\today }
\keywords{Statistical Stability, Transfer
Operator, Equilibrium States, Skew Product.}

\address[Davi Lima]{Universidade Federal de Alagoas, Instituto de Matemática - UFAL, Av. Lourival Melo Mota, S/N Tabuleiro dos Martins, Maceio - AL, 57072-900, Brasil}
\email{davi.santos@im.ufal.br}

\address[Rafael Lucena]{Universidade Federal de Alagoas, Instituto de Matemática - UFAL, Av. Lourival Melo Mota, S/N
	Tabuleiro dos Martins, Maceio - AL, 57072-900, Brasil}
\email{rafael.lucena@im.ufal.br}
\urladdr{www.im.ufal.br/professor/rafaellucena}

\maketitle

\begin{abstract}
    The robust statistical description of dynamical systems under perturbations is a central problem in ergodic theory. In this paper, we investigate the statistical properties of skew-product maps driven by a subshift of finite type with contracting fiber maps, a setting that naturally encompasses Iterated Function Systems (IFS) and Random Dynamical Systems (RDS). Diverging from the classical perturbative frameworks that rely on the compact embedding of anisotropic Banach spaces, we employ a flexible operator approach based on the Lipschitz regularity of the invariant measure's disintegrations with respect to the Wasserstein metric. Our main results are threefold: first, we prove the quantitative statistical stability of the unique invariant measure under admissible deterministic perturbations, obtaining an explicit modulus of continuity of the form $O(R(\delta) \log \delta)$. Second, we establish the exponential decay of correlations on new pair of spaces of observables. Finally, leveraging this exponential decay and Gordin's method, we prove the Central Limit Theorem for the fluctuations of Birkhoff averages of Lipschitz observables.
\end{abstract}

\section{Introduction}

The qualitative theory of smooth dynamical systems and differential equations is fundamentally concerned with understanding the long-term behavior of orbits and the robustness of their statistical descriptions. A paramount theme in this arena is statistical stability, the study of how invariant measures, particularly equilibrium states and physical measures, respond to small perturbations of the underlying evolution law. 

In recent years, the study of skew-product maps and Random Dynamical Systems (RDS) has garnered significant attention due to their ability to model complex, stochastically driven phenomena across various scientific disciplines. In physics, these systems are essential for describing non-equilibrium statistical mechanics and the transport properties of disordered media. In chemistry, they model reaction kinetics subject to fluctuating environmental parameters. Furthermore, in computer graphics and fractal geometry, Iterated Function Systems (IFS), which naturally embed into the skew-product framework, are the standard mathematical tool for generating complex topological structures and textures. Understanding the exact statistical limits of these systems when subjected to perturbations is therefore not only a deep mathematical question but a broadly applicable one.

Since the foundational works of Alves, Bonatti, and Viana \cite{ABV}, the qualitative continuity of invariant measures has been established for various hyperbolic and partially hyperbolic systems. However, obtaining quantitative statistical stability, explicit rates of convergence for the variation of these measures, remains a challenging frontier. A landmark in this direction is the spectral framework developed by Keller and Liverani \cite{KL}. Their approach establishes that the stability of the transfer operator's leading eigenvector can be controlled by the closeness of the unperturbed and perturbed operators in a suitable "weak" norm, provided they satisfy a uniform Lasota-Yorke inequality. 

While exceptionally powerful, the classical perturbative framework typically operates on a pair of Banach spaces $(B_s, B_w)$ under the crucial topological assumption that the strong space $B_s$ is compactly embedded into the weak space $B_w$. This compactness is the fundamental pillar for invoking the Hennion--Ionescu-Tulcea and Marinescu theorems, which guarantee the quasi-compactness of the transfer operator. Nevertheless, in the presence of complex singularity sets, infinite-dimensional fibers, or non-smooth dynamics, constructing spaces that admit such compact embeddings often constitutes an insurmountable technical barrier.

In this work, we bypass this classical bottleneck. We investigate skew-product maps characterized by a base dynamics modeled by a subshift of finite type and fiber maps that exhibit contraction. Following the novel approach to the thermodynamic formalism introduced in \cite{GLu} and recently expanded in \cite{RRR, RRRSTAB} and \cite{RR}, we evaluate the regularity of the system's invariant measure by analyzing the Lipschitz continuity of its disintegrations. By utilizing the Wasserstein metric to measure the distance between fiber distributions, we construct suitable anisotropic spaces that do not require compact inclusion arguments. Instead, we achieve uniqueness and regularity by directly lifting a known spectral gap from the base system to the transfer operator of the total skew-product.

This paper provides a significant leap forward by utilizing these flexible spectral gap results to yield a complete statistical description of perturbed skew-products. Specifically, our contributions push the boundaries of the existing literature in the following directions:

\begin{itemize}
    \item \textbf{Quantitative Statistical Stability:} We establish that the invariant measure $\mu_\delta$ of the perturbed system converges to the unperturbed measure $\mu_0$ with an explicit, quantitative modulus of continuity. Specifically, for an admissible $R(\delta)$-perturbation, the variation in the $L^\infty$ norm is bounded by $D R(\delta) \log \delta$.
    
    \item \textbf{Exponential Decay of Correlations:} We extend the statistical analysis to a broad class of observables defined on the total space. By carefully projecting these observables onto the base $\sigma$-algebra using the regularity of the disintegrations, we prove that the system exhibits exponential decay of correlations.
    
    \item \textbf{The Central Limit Theorem (CLT):} Most notably, we demonstrate how our non-compact operator approach seamlessly integrates with classical probability theory. By utilizing the exponential decay of correlations to verify the summability of conditional expectations, we apply Gordin's Theorem to prove that the fluctuations of Birkhoff averages for Lipschitz observables satisfy the Central Limit Theorem.
\end{itemize}

The remainder of this paper is organized as follows. In Section 2, we define the setting for the skew-product maps and introduce the necessary functional spaces and transfer operators. Section 3 is devoted to establishing the regularity of the disintegration of the invariant measure using the Wasserstein metric and appropriate spaces of signed measures. In Section 4, we define the admissible perturbations and prove the quantitative statistical stability of the system, as stated in Theorem \ref{d}. Finally, Section 5 presents the proofs of the exponential decay of correlations (Theorems \ref{çljghhjçh} and \ref{athmc}) and the Central Limit Theorem (Theorem \ref{central}).

\textbf{Acknowledgment} This work was partially supported by CNPq (Brazil) Grants 446515/2024-8 and CNPq (Brazil) Grants 420353/2025-9.

\section{Foundational Framework and Notation}\label{sec1}

In this section, we establish the structural assumptions and notation that will be employed throughout this work. For any Lipschitz mapping $\varphi:(X,d_X)\rightarrow (Y,d_Y)$ between two metric spaces, we define its Lipschitz constant as:
\begin{equation}\label{lnot}
    L_{X,Y}(\varphi) := \sup_{x \neq y} \frac{d_Y(\varphi(x), \varphi(y))}{d_X(x, y)}.
\end{equation}
In the specific case where the codomain is the real line ($Y = \mathbb{R}$), we simplify this notation to $L_X(\varphi)$. 

While $L_X(\varphi)$ remains our default convention, we introduce two primary exceptions for the sake of clarity and alignment with the literature. First, when considering real-valued functions defined on sub-shifts of finite type $(\Sigma^{+}_A, d_{\theta})$ (to be detailed below), we represent the Lipschitz constant by $|\varphi|_{\theta} := L_{\Sigma^{+}_A}(\varphi)$. Second, specialized notation will be introduced in Section \ref{lip} to handle the specific requirements of that context.

\subsection{Sub-shifts of Finite Type and Transfer Operators}\label{sec2}

Let $A$ denote an aperiodic matrix. We focus on the associated sub-shift of finite type, represented by the space of admissible sequences:
	$$\Sigma^+_A = \left\{ \underline{x} = (x_i)_{i \in \mathbb{N}} : A_{x_i, x_{i+1}} = 1, \, \forall i \ge 1 \right\},$$
where each symbol $x_i$ belongs to the finite alphabet $\{1, \dots, N\}$. The topology on $\Sigma^+_A$ is induced by the metric $d_\theta$, defined for a fixed parameter $\theta \in (0,1)$ as:
    $$d_\theta(\underline{x}, \underline{y}) = \sum_{i=0}^{\infty} \theta^i (1 - \delta_{x_i, y_i}),$$
where $\delta_{x,y}$ represents the standard Kronecker delta. We fix a Markov measure $m$ on this space, which is invariant under the shift map $\sigma: \Sigma^+_A \to \Sigma^+_A$, where $(\sigma(\underline{x}))_i = x_{i+1}$.

The dynamics of the shift are closely linked to the properties of its Perron-Frobenius operator\footnote{Defined as the unique operator $P_{\sigma}: L_1(m) \to L_1(m)$ satisfying the duality relation $\int \varphi \cdot (\psi \circ \sigma) \, dm = \int P_{\sigma}(\varphi) \cdot \psi \, dm$ for all $\varphi \in L_1(m)$ and $\psi \in L_{\infty}(m)$.}, denoted by $P_\sigma$. We are particularly interested in the action of $P_\sigma$ on the Banach space of Lipschitz functions $\mathcal{F}_{\theta}(\Sigma^+_A) := \{ \varphi: \Sigma^+_A \to \mathbb{R} : |\varphi|_\theta < \infty \}$, equipped with the norm $\|\varphi\|_{\theta} = |\varphi|_\infty + |\varphi|_{\theta}$.

A fundamental property of this operator in our context is the existence of a spectral gap. Specifically, $P_\sigma$ acting on $\mathcal{F}_{\theta}(\Sigma^+_A)$ admits a spectral decomposition (refer to \cite{PB, RE}):
	$$P_{\sigma} = \Pi_{\sigma} + \mathcal{N}_\sigma,$$
where $\Pi_\sigma$ is a projection operator ($\Pi_\sigma^2 = \Pi_\sigma$) and $\mathcal{N}_\sigma$ is a complementary operator with spectral radius $\rho(\mathcal{N}_\sigma) < 1$. Consequently, there exist constants $D > 0$ and $r \in (0,1)$ such that the iterations of the operator on the kernel of $\Pi_\sigma$ satisfy:
	$$\|P_{\sigma}^n \varphi\|_{\theta} \le D r^n \|\varphi\|_{\theta}, \quad \forall n \geq 0.$$

The next result establishes the exponential mixing properties of the shift map, which are a standard implication of the spectral gap of $P_\sigma$ acting on $\mathcal{F}_\theta (\Sigma _A^+)$. This decay of correlations is a well-documented feature in the thermodynamic formalism of sub-shifts (refer to \cite{PB} for a comprehensive treatment); consequently, we state it here without proof.

\begin{proposition}\label{iutryrt}
	One can find a constant $0 < \tau_2 < 1$ such that, for every $\psi \in L^1_{m}(\Sigma_A^+)$ and $\varphi \in \mathcal{F}_\theta(\Sigma_A^+)$, the following correlation estimate holds:
	\[
	\left| 
	\int (\psi \circ \sigma^n) \, \varphi \, dm 
	- \int \psi \, dm \int \varphi \, dm 
	\right| 
	\le \tau_2^n D(\psi, \varphi), \quad \forall\, n \ge 1,
	\]
	where $D(\psi, \varphi) > 0$ is a coefficient determined by the choice of $\psi$ and $\varphi$.
\end{proposition}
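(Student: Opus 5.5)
The plan is to move the composition onto the transfer operator via duality and then use the spectral gap of $P_\sigma$ on $\mathcal{F}_\theta(\Sigma_A^+)$ stated above. Since $m$ is $\sigma$-invariant, $P_\sigma 1 = 1$. Hence $\Pi_\sigma \varphi = \left(\int \varphi\, dm\right) 1$, because the leading eigenvalue $1$ is simple for the aperiodic matrix $A$ (this is the content of the decomposition cited from \cite{PB, RE}). Set
\[
\varphi_0 := \varphi - \int \varphi\, dm .
\]
Then $\varphi_0 \in \mathcal{F}_\theta(\Sigma_A^+)$, $\varphi_0 \in \ker \Pi_\sigma$, and $\|\varphi_0\|_\theta \le 2\|\varphi\|_\theta$.

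Next, rewrite the correlation. Iterating the duality relation in the footnote gives
\[
\int (\psi\circ\sigma^n)\,\varphi\, dm = \int \psi \, P_\sigma^n\varphi\, dm .
\]
Since $P_\sigma^n 1 = 1$, we have $P_\sigma^n \varphi = \int\varphi\,dm + P_\sigma^n\varphi_0$. Therefore
\[
\int (\psi\circ\sigma^n)\,\varphi\, dm - \int\psi\,dm\int\varphi\,dm = \int \psi\, P_\sigma^n \varphi_0\, dm .
\]
One caveat: the footnote states duality for $\psi\in L_\infty(m)$, while here $\psi \in L^1_m$ and $\varphi$ is bounded. Duality still holds in this setting, by truncating $\psi$ and passing to the limit with dominated convergence, or directly from the change of variables $\int (\psi\circ\sigma^n) \varphi\, dm = \int \psi\, P^n_\sigma \varphi \,dm$, valid whenever $\varphi\in L^\infty$ and $\psi\in L^1$.

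Finally, estimate. Using $|\cdot|_\infty \le \|\cdot\|_\theta$ and the spectral gap bound on $\ker\Pi_\sigma$,
\[
\left|\int \psi\, P_\sigma^n\varphi_0\, dm\right| \le |\psi|_{L^1_m}\, |P_\sigma^n\varphi_0|_\infty \le |\psi|_{L^1_m}\, D r^n \|\varphi_0\|_\theta \le 2D\, r^n |\psi|_{L^1_m}\|\varphi\|_\theta .
\]
So the statement holds with $\tau_2 := r$ and $D(\psi,\varphi) := 2D\,|\psi|_{L^1_m}\|\varphi\|_\theta$, where one may take any $\tau_2\in(\rho(\mathcal{N}_\sigma),1)$.

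The main point needing care is the identification $\Pi_\sigma\varphi = \int\varphi\,dm$, i.e., that the eigenvalue $1$ is simple with eigenvector $1$ and dual eigenfunctional $m$. This follows from the duality relation together with mixing: $\int \Pi_\sigma \varphi\, dm = \int \varphi\, dm$, and the range of $\Pi_\sigma$ is one-dimensional by aperiodicity of $A$ (Ruelle--Perron--Frobenius theorem). Everything else is routine.
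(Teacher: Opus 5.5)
Your proof is correct and follows the route the paper has in mind. The paper gives no proof of its own: it calls the estimate a standard consequence of the spectral gap of $P_\sigma$ on $\mathcal{F}_\theta(\Sigma_A^+)$ and refers to \cite{PB}. Your duality step, subtraction of the mean and $L^1$--$L^\infty$ bound supply exactly the details left implicit there: you extend duality to $\psi\in L^1_m$ by truncation, using $|P_\sigma^n\varphi|_\infty\le|\varphi|_\infty$, and you identify $\Pi_\sigma$ with integration against $m$, which the paper also uses tacitly in Remark \ref{rem123}.
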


\subsection{Dynamics of Contracting Fiber Maps}\label{sec22}

Consider a compact metric space $(K, d)$ endowed with its natural Borel $\sigma$-algebra. We are interested in the transformation $F: \Sigma^+_A \times K \to \Sigma^+_A \times K$, defined as a skew-product of the form:
\begin{equation}\label{cccccc}
F(\underline{x}, z) = (\sigma(\underline{x}), G(\underline{x}, z)),
\end{equation}
where $G: \Sigma^+_A \times K \to K$ represents a measurable fiber map. The product space $\Sigma^+_A \times K$ is assumed to carry the corresponding product $\sigma$-algebra.

For the sake of clarity in our exposition, we occasionally employ the shorthand $\gamma$ to refer to the fiber $\gamma_{\underline{x}}$. However, we maintain a rigorous distinction between these notations within the technical proofs to ensure precision. Furthermore, to streamline our estimates and eliminate redundant multiplicative constants, we adopt the normalization $\text{diam}(K) = 1$. Throughout the remainder of this work, $\pi_1$ and $\pi_2$ shall denote the canonical projections onto the base space $\Sigma^+_A$ and the fiber $K$, respectively.

Finally, we establish a standard notation for measures. Given a measurable space $(X, \mathcal{X})$ and a measure $\mu$ acting upon it, any integrable function $f \in L^1_\mu$ induces a signed measure $f\mu$ on $(X, \mathcal{X})$ through the integration:
$$f\mu(E) := \int_E f \, d\mu, \quad \forall E \in \mathcal{X}.$$

\subsubsection{Assumptions on the Fiber Map $G$}

We now introduce the two core hypotheses regarding the coordinate function $G$, which govern the dynamics along the fibers and their dependence on the base space.

\begin{description}
\item[G1] Let $\mathcal{F}^{s} = \{ \gamma_{\underline{x}} \}_{\underline{x} \in \Sigma_A^+}$ denote the $F$-invariant lamination, where each leaf is defined as $\gamma_{\underline{x}} := \{ \underline{x} \} \times K$. We assume that this lamination is strictly contracting: there exists a contraction ratio $0 < \alpha < 1$ such that, for $m$-almost every $\underline{x} \in \Sigma_A^+$, the following inequality holds for all $y_1, y_2 \in K$:
\begin{equation}
d(G(\underline{x}, y_1), G(\underline{x}, y_2)) \le \alpha d(y_1, y_2). \label{contracting1}
\end{equation}

\item[G2] We assume that the map $G$ exhibits Lipschitz regularity with respect to the base coordinate, uniformly across the fiber. Specifically, for each $y \in K$, there is a constant $k_y \ge 0$ satisfying
$$d(G(\underline{x}^1, y), G(\underline{x}^2, y)) \le k_y d_{\theta}(\underline{x}^1, \underline{x}^2),$$
with the supremum 
\begin{equation}
H := \sup_{y \in K} k_y < \infty \label{sup}    
\end{equation}being finite.
\end{description}

To illustrate the applicability of these conditions, we examine a canonical model based on a contracting Iterated Function System (IFS).

\begin{example}\label{kjfhjsfg}
Consider a collection of pairwise disjoint closed intervals $I_1, \dots, I_d$ within $\mathbb{R}$, and let $I$ be their convex hull. Let $g: \bigcup_{i=1}^d I_i \to I$ be a map such that $|g'| \ge \lambda > 1$, where $g$ maps each $I_i$ diffeomorphically onto $I$. We define the inverse branches $\varphi_i := (g|_{I_i})^{-1}: I \to I_i$. Let $A$ be an all-ones $d \times d$ matrix, and define the fiber map $G: \Sigma_A^+ \times I \to I$ by $G(\underline{x}, y) := \varphi_{x_0}(y)$, where $x_0$ is the first symbol of $\underline{x}$. The resulting skew-product is $F(\underline{x}, y) = (\sigma(\underline{x}), G(\underline{x}, y))$.

The Mean Value Theorem, combined with the expansion of $g$, directly implies that:
$$d(G(\underline{x}, y_1), G(\underline{x}, y_2)) = d(\varphi_{x_0}(y_1), \varphi_{x_0}(y_2)) \le \lambda^{-1} d(y_1, y_2).$$
Thus, \textbf{G1} is satisfied with $K = I$ and $\alpha = \lambda^{-1}$.

To verify \textbf{G2}, consider $\underline{x}^1, \underline{x}^2 \in \Sigma_A^+$. If both sequences lie in the same initial cylinder $[j]$, then $x_0^1 = x_0^2$, which implies $G(\underline{x}^1, y) = G(\underline{x}^2, y)$, making the distance zero. Otherwise, $x_0^1 \neq x_0^2$, and we have:
$$d(G(\underline{x}^1, y), G(\underline{x}^2, y)) = d(\varphi_{x_0^1}(y), \varphi_{x_0^2}(y)) \le \text{diam}(I).$$
Since $x_0^1 \neq x_0^2$ implies $d_{\theta}(\underline{x}^1, \underline{x}^2) \ge 1 > \theta$, we can write:
$$d(G(\underline{x}^1, y), G(\underline{x}^2, y)) \le \frac{\text{diam}(I)}{\theta} d_{\theta}(\underline{x}^1, \underline{x}^2).$$
Setting $R = \text{diam}(I)/\theta$, it follows that $H \le R < \infty$, confirming \textbf{G2}.
\end{example}

As established in the classical work of Hutchinson \cite{Hc}, such systems possess an invariant measure $\mu$ on $K$ characterized by the fixed-point relation:
\begin{equation}\label{nvbbjdjfdf}
\mu = \sum_{i=1}^{N} p_i \varphi_{i}^{\ast} \mu,
\end{equation}
where $(p_1, \dots, p_N)$ is a probability vector. If $m$ represents the Bernoulli measure associated with this vector, then the product measure $m \times \mu$ is $F$-invariant. This specific construction serves as a foundational prototype for the more general results discussed in Section \ref{lp}.

\section{Functional Framework and Measure Disintegration}

\subsection{Foundations of Rokhlin's Disintegration} 

In this section, we review the essential theory of measure disintegration, which allows for the decomposition of a global measure into a family of conditional measures supported on the atoms of a partition.

Consider a probability space $(\Sigma, \mathcal{B}, \mu)$ and let $\Gamma$ be a partition of $\Sigma$ into measurable sets $\gamma \in \mathcal{B}$. We denote by $\pi: \Sigma \to \Gamma$ the canonical projection mapping each point $u \in \Sigma$ to the unique element $\gamma_u \in \Gamma$ that contains it. The quotient space $\Gamma$ is equipped with the $\sigma$-algebra $\widehat{\mathcal{B}}$, defined such that $\mathcal{Q} \subset \Gamma$ is measurable if and only if its preimage $\pi^{-1}(\mathcal{Q})$ belongs to $\mathcal{B}$. This structure induces a quotient measure $\hat{\mu}$ on $(\Gamma, \widehat{\mathcal{B}})$, given by the push-forward $\hat{\mu} = \pi_* \mu$, i.e., $\hat{\mu}(\mathcal{Q}) = \mu(\pi^{-1}(\mathcal{Q}))$.

The following fundamental result establishes the existence of a fiberwise decomposition of $\mu$ (a detailed proof is available in \cite{Kva}, Theorem 5.1.11).

\begin{theorem}(Rokhlin's Disintegration Theorem) \label{rok}
Let $\Sigma$ be a complete and separable metric space, and let $\Gamma$ be a measurable partition\footnote{Recall that a partition $\Gamma$ is measurable if there exists a set of full measure $M_0 \subset \Sigma$ such that, on $M_0$, $\Gamma$ coincides with the limit $\bigvee_{n=1}^{\infty} \Gamma_n$ of an increasing sequence of countable partitions $\Gamma_1 \prec \Gamma_2 \prec \dots \prec \Gamma_n \prec \dots$. Here, $\Gamma_i \prec \Gamma_{i+1}$ indicates that every element of $\Gamma_{i+1}$ is a refinement of some element in $\Gamma_i$.} of $\Sigma$. For any probability measure $\mu$ on $\Sigma$, there exists a disintegration of $\mu$ with respect to $\Gamma$. This consists of a family of probability measures $\{\mu_\gamma\}_{\gamma \in \Gamma}$ on $\Sigma$ and a quotient measure $\hat{\mu} = \pi^* \mu$ satisfying:
		
		\begin{enumerate}
			\item [(a)] The measure $\mu_\gamma$ is concentrated on the atom $\gamma$ (i.e., $\mu_\gamma(\gamma) = 1$) for $\hat{\mu}$-almost every $\gamma \in \Gamma$;
			\item [(b)] For any measurable set $E \in \mathcal{B}$, the mapping $\gamma \longmapsto \mu_\gamma(E)$ is $\widehat{\mathcal{B}}$-measurable; 
			\item [(c)] The global measure of any set $E \in \mathcal{B}$ can be recovered via the integral $\mu(E) = \int_{\Gamma} \mu_\gamma(E) \, d\hat{\mu}(\gamma)$.
		\end{enumerate}
\end{theorem}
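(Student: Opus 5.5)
The plan is to build the conditional measures from conditional expectations along the increasing sequence of countable partitions $\Gamma_n$ in the definition of measurability, and then pass to the limit using separability and compactness arguments. First reduce to the case where $\Sigma$ is compact, or at least to working with a countable algebra: since $\Sigma$ is complete and separable, fix a countable base and let $\mathcal{A}$ be the countable algebra it generates, so that $\mathcal{A}$ generates $\mathcal{B}$. By discarding a null set we may assume $\Gamma=\bigvee_n \Gamma_n$ on all of $\Sigma$. For each $n$ and each atom $P\in\Gamma_n$ with $\mu(P)>0$ define the elementary conditional measure $\mu_P(E)=\mu(E\cap P)/\mu(P)$. For $u\in\Sigma$ let $P_n(u)$ be the atom of $\Gamma_n$ containing $u$. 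For each fixed $E\in\mathcal{A}$, the sequence $u\mapsto \mu_{P_n(u)}(E)$ is exactly the conditional expectation $\mathbb{E}_\mu[\mathbf{1}_E\mid\sigma(\Gamma_n)]$, a bounded martingale. By the martingale convergence theorem it converges $\mu$-a.e. to $\mathbb{E}_\mu[\mathbf{1}_E\mid\sigma(\Gamma)]$. Since $\mathcal{A}$ is countable, there is one full-measure set $M_1$ on which convergence holds simultaneously for all $E\in\mathcal{A}$.

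On $M_1$ the limit $\nu_u(E):=\lim_n \mu_{P_n(u)}(E)$ is a finitely additive set function on $\mathcal{A}$ with values in $[0,1]$ and $\nu_u(\Sigma)=1$. The main obstacle is upgrading this to countable additivity, so that $\nu_u$ extends by Carathéodory to a probability on $\mathcal{B}$. I would handle it by tightness. Since $\mu$ is a Borel probability on a Polish space, it is inner regular: pick compacts $K_j$ with $\mu(K_j)>1-2^{-j}$. Enlarge $\mathcal{A}$ (still countably) so that it contains the $K_j$ and finite unions of basic balls, and apply the martingale argument to $\mathbf{1}_{K_j}$ as well. Then $\int\nu_u(K_j)\,d\mu=\mu(K_j)$, so by Borel–Cantelli-type reasoning, $\nu_u(K_j)\to1$ for a.e. $u$. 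On such $u$ one can show that $\nu_u$ is inner-approximated by compacts on $\mathcal{A}$. Compact classes give countable additivity by the standard Marczewski argument. An alternative I would try if this gets messy is to embed $\Sigma$ Borel-isomorphically into $[0,1]$, build a distribution function from rational intervals, and invoke monotone right-continuous regularisation, which yields a genuine measure directly.

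Next, with $\nu_u$ defined on a full-measure set $M_2$, I check the properties. Each $\nu_u(E)$ is $\sigma(\Gamma)$-measurable, first for $E\in\mathcal{A}$ as a limit of $\Gamma_n$-measurable functions, and then for all $E\in\mathcal{B}$ by a monotone class argument. Hence $\nu_u$ depends only on $\gamma_u$, which defines $\mu_\gamma$ and gives (b). Property (c) holds on $\mathcal{A}$ by the tower property, $\int \mathbb{E}[\mathbf{1}_E\mid\Gamma]\,d\mu=\mu(E)$, and it passes to $\mathcal{B}$ by monotone class again. Rewriting the integral over $\Gamma$ via $\hat\mu=\pi_*\mu$ completes (c).

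Finally, for (a), note that $\gamma_u=\bigcap_n P_n(u)$. Since $P_n(u)\in\sigma(\Gamma_n)$, we have $\mathbb{E}[\mathbf{1}_{P_k(u)}\mid\Gamma]=1$ a.e. on $P_k(u)$. Countably many atoms are involved, so a.e. $u$ satisfies $\nu_u(P_k(u))=1$ for all $k$. Continuity from above then gives $\nu_u(\gamma_u)=1$. Undefined $u$ in the null complement receive an arbitrary fixed probability on $\gamma_u$ when measurability permits, or else are simply excluded, since the statement is only up to $\hat\mu$-null sets.
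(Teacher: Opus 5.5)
The paper does not prove this theorem; it quotes it from Oliveira--Viana (Theorem 5.1.11). The proof there first treats compact $\Sigma$. For $\psi$ in a countable dense subset of $C^0(\Sigma)$, the averages $e_n(\psi)(x)=\mu(P_n(x))^{-1}\int_{P_n(x)}\psi\,d\mu$ converge a.e. by martingale convergence. The limit extends to a positive normalized linear functional on $C^0(\Sigma)$, and Riesz--Markov produces $\mu_x$; the complete separable case is then reduced to the compact one.

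Your route is genuinely different. You stay on the Polish space, work with set functions on a countable algebra, and prove $\sigma$-additivity by hand, either via compact classes or via a Borel isomorphism with $[0,1]$ and distribution functions. This is exactly the step Riesz--Markov packages in the compact case. In exchange you need no compactification, and $u\mapsto\nu_u(E)$ is directly a version of $\mathbb{E}[\mathbf{1}_E\mid\bigvee_n\sigma(\Gamma_n)]$ for every Borel $E$. Also, (a) becomes immediate if you put the countably many atoms of all $\Gamma_k$ into $\mathcal{A}$, since then $\nu_u(P_k(u))=\lim_n\mu_{P_n(u)}(P_k(u))=1$. As written, you instead use $\nu_u(P)=\mathbb{E}[\mathbf{1}_P\mid\cdot\,](u)$ for atoms $P\notin\mathcal{A}$. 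That needs the conditional form of (c), namely $\int_G\nu_u(E)\,d\mu=\mu(E\cap G)$ for $G$ in the limit $\sigma$-algebra, extended by monotone class, not just the integrated form you state.

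One step is not correct as stated: ``on such $u$ one can show that $\nu_u$ is inner-approximated by compacts on $\mathcal{A}$.'' Global tightness $\nu_u(K_j)\to1$ does not give this. First, $A\cap K_j$ is not compact for open $A$. Second, a finitely additive ``point mass at $0^+$'' on the algebra of finite unions of rational intervals in $[0,1]$ gives full mass to the compact set $[0,1]$, yet it is not $\sigma$-additive.

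Inner regularity must be transferred from $\mu$ to $\nu_u$ set by set.
\begin{itemize}
\item By a countable iteration, choose $\mathcal{A}$ so that each $A\in\mathcal{A}$ contains compacts $C_{A,k}\in\mathcal{A}$, increasing in $k$, with $\mu(A\setminus C_{A,k})\to0$.
\item For each $u$, $k\mapsto\nu_u(A\setminus C_{A,k})$ is non-increasing, and $\int\nu_u(A\setminus C_{A,k})\,d\mu(u)=\mu(A\setminus C_{A,k})$. Hence $\nu_u(A\setminus C_{A,k})\to0$ for a.e. $u$.
\item Discard these countably many null sets. They are automatically $\Gamma$-saturated, because $\nu_u|_{\mathcal{A}}$ depends only on $(P_n(u))_n$. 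Every remaining $\nu_u$ is then inner compact regular on $\mathcal{A}$, and Marczewski's compact-class theorem gives $\sigma$-additivity.
\end{itemize}
With this repair, or with your distribution-function alternative, which is a complete standard argument, the proof goes through. Finally, on the exceptional saturated null set assign one fixed probability rather than one ``on $\gamma_u$'', so that the measurability in (b) is not endangered.
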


The uniqueness of such a decomposition is guaranteed under standard separability conditions (see \cite{Kva}, Proposition 5.1.7).

\begin{lemma} \label{kv}
Assume that the $\sigma$-algebra $\mathcal{B}$ possesses a countable generator. If both $(\{\mu_\gamma\}_{\gamma \in \Gamma}, \hat{\mu})$ and $(\{\mu'_\gamma\}_{\gamma \in \Gamma}, \hat{\mu})$ are disintegrations of $\mu$ relative to the same partition $\Gamma$, then $\mu_\gamma = \mu'_\gamma$ for $\hat{\mu}$-almost every $\gamma \in \Gamma$. 
\end{lemma}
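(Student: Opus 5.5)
The plan is to compare the two families on a countable generating algebra and then extend to all of $\mathcal{B}$ by a monotone class argument. Let $\mathcal{A} = \{E_1, E_2, \dots\}$ be a countable algebra generating $\mathcal{B}$. Such an algebra exists: take the algebra generated by the given countable generator, which is still countable.

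First, fix $E \in \mathcal{B}$ and $\mathcal{Q} \in \widehat{\mathcal{B}}$. I apply property (c) of Theorem \ref{rok} to the set $E \cap \pi^{-1}(\mathcal{Q})$. By property (a), for $\hat{\mu}$-a.e. $\gamma$ the measure $\mu_\gamma$ is concentrated on $\gamma$. Since $\pi^{-1}(\mathcal{Q})$ is a union of atoms, this gives $\mu_\gamma(E \cap \pi^{-1}(\mathcal{Q})) = \mu_\gamma(E)\,\chi_{\mathcal{Q}}(\gamma)$ for a.e. $\gamma$, and the same holds for $\mu'_\gamma$. Therefore
\[
\int_{\mathcal{Q}} \mu_\gamma(E)\, d\hat{\mu}(\gamma) = \mu(E \cap \pi^{-1}(\mathcal{Q})) = \int_{\mathcal{Q}} \mu'_\gamma(E)\, d\hat{\mu}(\gamma)
\quad \text{for all } \mathcal{Q} \in \widehat{\mathcal{B}}.
\]
By property (b), both $\gamma \mapsto \mu_\gamma(E)$ and $\gamma \mapsto \mu'_\gamma(E)$ are bounded and $\widehat{\mathcal{B}}$-measurable. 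I then choose $\mathcal{Q} = \{\gamma : \mu_\gamma(E) > \mu'_\gamma(E)\}$ and afterwards the reverse inequality set. This shows $\mu_\gamma(E) = \mu'_\gamma(E)$ outside a $\hat{\mu}$-null set $N_E$.

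Second, set $N = \bigcup_{i} N_{E_i}$. This is a countable union of null sets, so $\hat{\mu}(N) = 0$. For $\gamma \notin N$, the probability measures $\mu_\gamma$ and $\mu'_\gamma$ agree on the algebra $\mathcal{A}$. The collection of sets on which they agree is a monotone class, by continuity of measures along monotone sequences. It therefore contains $\sigma(\mathcal{A}) = \mathcal{B}$ by the monotone class theorem, or equivalently by Carathéodory uniqueness or Dynkin's $\pi$–$\lambda$ theorem. Hence $\mu_\gamma = \mu'_\gamma$ for every $\gamma \notin N$.

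The main obstacle is the order of the quantifiers. The first step only yields an exceptional set that depends on $E$, and there are uncountably many $E$. Countability of the generator is exactly what allows these sets to be collected into a single null set, and it must be arranged before the measures are extended. Measurability of the comparison sets $\mathcal{Q}$ is not a difficulty, since property (b) provides it.
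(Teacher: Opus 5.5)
Your proof is correct and is essentially the standard argument. The paper gives no proof of its own and defers to \cite{Kva} (Proposition 5.1.7), which argues the same way: it uses (a)--(c) to show that the sets $\{\gamma : \mu_\gamma(E) > \mu'_\gamma(E)\}$ (and the reverse) are $\hat{\mu}$-null, collects these null sets over the countable algebra generated by the countable generator, and concludes by uniqueness of extension from an algebra.
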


Let $\Sigma = \Sigma_A^+ \times K$ be the product space and denote by $\mathcal{SB}(\Sigma)$ the collection of Borel signed measures acting on $\Sigma$. For any measure $\mu \in \mathcal{SB}(\Sigma)$, we consider its Jordan decomposition $\mu = \mu^+ - \mu^-$, where $\mu^+$ and $\mu^-$ represent the unique positive and negative variations of $\mu$, respectively (refer to Remark \ref{ghtyhh} for further details). Within this framework, we focus on the subspace $\mathcal{AB}_m$, defined as:
\begin{equation}
\mathcal{AB}_m = \left\{ \mu \in \mathcal{SB}(\Sigma) : \pi_1^* \mu^+ \ll m \quad \text{and} \quad \pi_1^* \mu^- \ll m \right\}, \label{thespace1}
\end{equation}
where $m$ is the reference Markov measure and $\pi_1: \Sigma \to \Sigma_A^+$ is the canonical projection onto the base space, defined by $\pi_1(\underline{x}, y) = \underline{x}$.

When dealing with a probability measure $\mu$ within the class $\mathcal{AB}_m$, the application of Rokhlin's Theorem (Theorem \ref{rok}) becomes particularly transparent. For any point $u = (\underline{x}, y) \in \Sigma$, we define the fiber passing through $u$ as $\gamma_u = \{\underline{x}\} \times K$. It follows immediately that if two points $u$ and $\tilde{u}$ share the same projection under $\pi_1$, they belong to the same fiber $\gamma_u = \gamma_{\tilde{u}}$. 

We then consider the collection of all such fibers, $\Gamma = \{ \{\underline{x}\} \times K : \underline{x} \in \Sigma_A^+ \}$, and let $\pi: \Sigma \to \Gamma$ be the associated quotient map. In this setting, $\Gamma$ is naturally identified with the stable lamination $\mathcal{F}^s$ and the quotient map $\pi$ corresponds to the projection $\pi_1$. Furthermore, since the Radon-Nikodym derivative of the quotient measure with respect to $m$ is given by $\phi_1 = \frac{d\hat{\mu}}{dm}$, we can treat $\hat{\mu}$ as the absolutely continuous measure $\phi_1 m$.

To facilitate the analysis of measures within each component of the foliation, we introduce the coordinate projection $\pi_{2,\gamma}: \gamma_{\underline{x}} \to K$, which maps each pair $(\underline{x}, y)$ to its respective $y$-component in $K$. Through this mapping, we can define a collection of measures localized on the reference space $K$.

\begin{definition} 
Let $\mu \in \mathcal{AB}_m$ be a positive measure and $(\{\mu_\gamma\}_\gamma, \hat{\mu} = \phi_1 m)$ its disintegration with respect to the stable lamination $\mathcal{F}^s$. We define the \textbf{restriction of $\mu$ to the fiber $\gamma$} as the positive measure $\mu|_\gamma$ supported on $K$ (as opposed to the leaf $\gamma$ itself), given for each measurable subset $B \subset K$ by:
\begin{equation*}
\mu |_{\gamma}(B) := \pi_{2, \gamma}^* \left( \phi_1(\underline{x}) \mu_\gamma \right)(B), \quad \text{where } \gamma = \gamma_{\underline{x}}. \label{restrictionmeasure}
\end{equation*}
\end{definition}

This construction effectively projects the disintegrated component $\mu_\gamma$, weighted by the base density $\phi_1$, onto the universal fiber $K$. This approach shifts the study of measures on distinct leaves to a single, fixed space, thereby streamlining the application of metrics such as the Wasserstein distance in subsequent sections.

The following result, which establishes the measurability of the fiberwise restriction, was previously demonstrated in \cite{DR}. Consequently, we state it here without its proof.

\begin{lemma}\label{mens}
    Consider $\mu \in \mathcal{AB}_m$. For any measurable subset $B \subset K$, the real-valued mapping $\tilde{c}: \Sigma_A^+ \to \mathbb{R}$ defined by $\tilde{c}(\underline{x}) = \mu |_{\gamma_{\underline{x}}}(B)$ is measurable with respect to the Borel $\sigma$-algebra of $\Sigma_A^+$.
\end{lemma}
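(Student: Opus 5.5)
The measurability will come from identifying $\tilde{c}$ as a Radon--Nikodym derivative. First I handle a positive $\mu \in \mathcal{AB}_m$, with disintegration $(\{\mu_\gamma\}_\gamma, \hat{\mu} = \phi_1 m)$. Identifying $\Gamma$ with $\Sigma_A^+$ via $\pi_1$, the definition of the restriction gives, for $\gamma = \gamma_{\underline{x}}$,
\[
\tilde{c}(\underline{x}) = \phi_1(\underline{x})\,\mu_{\gamma_{\underline{x}}}\bigl(\pi_{2,\gamma}^{-1}(B)\bigr) = \phi_1(\underline{x})\,\mu_{\gamma_{\underline{x}}}\bigl(\Sigma_A^+ \times B\bigr),
\]
where the second equality uses that $\mu_\gamma$ is concentrated on $\gamma$ (Theorem \ref{rok}(a)). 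Thus $\gamma \cap (\Sigma_A^+ \times B) = \{\underline{x}\}\times B$ up to a $\mu_\gamma$-null set, and this holds for $m$-a.e. $\underline{x}$.

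The factor $\phi_1$ is a Radon--Nikodym derivative and so is Borel measurable (after choosing a Borel version). The factor $\underline{x} \mapsto \mu_{\gamma_{\underline{x}}}(E)$ with $E = \Sigma_A^+ \times B \in \mathcal{B}$ is $\widehat{\mathcal{B}}$-measurable by Theorem \ref{rok}(b). I then need $\widehat{\mathcal{B}}$ to correspond to the Borel $\sigma$-algebra of $\Sigma_A^+$ under $\pi_1$. By definition, $\mathcal{Q}$ is measurable iff $\pi_1^{-1}(\mathcal{Q}) = \mathcal{Q}\times K$ is Borel in $\Sigma_A^+\times K$. If $\mathcal{Q}$ is Borel this is clear. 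Conversely, $\mathcal{Q} = \{\underline{x} : (\underline{x}, y_0) \in \mathcal{Q}\times K\}$ is a section of a Borel set, hence Borel, for any fixed $y_0\in K$. The product of the two measurable factors is then measurable.

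For a signed $\mu \in \mathcal{AB}_m$, I apply the above to $\mu^+$ and $\mu^-$. Both have base marginals absolutely continuous with respect to $m$ by the definition (\ref{thespace1}) of $\mathcal{AB}_m$, so $\tilde{c} = \tilde{c}^+ - \tilde{c}^-$ is measurable, reading $\mu|_\gamma$ as $\mu^+|_\gamma - \mu^-|_\gamma$. The case of the zero measure, or of non-normalized positive measures, is handled by normalizing to a probability measure, to which Theorem \ref{rok} applies, and rescaling. This uses that $\Sigma_A^+\times K$ is complete separable and that $\Gamma$ is a measurable partition, being the limit of partitions into cylinders $[x_0\dots x_n]\times K$.

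The main obstacle is the almost-everywhere nature of the disintegration: $\mu_\gamma$ and $\phi_1$ are only defined up to $m$-null sets, so strictly $\tilde{c}$ is measurable after modification on a null set. I would state that a Borel version is chosen, which is harmless since everything is used under integration against $m$. Alternatively, if $m$-completion is acceptable, this suffices directly.
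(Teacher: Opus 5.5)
The paper does not prove this lemma; it defers to \cite{DR}. Your argument is correct and is the natural one: you write $\tilde c(\underline x)=\phi_1(\underline x)\,\mu_{\gamma_{\underline x}}(\Sigma_A^+\times B)$, take measurability of the second factor from Theorem \ref{rok}(b), use your section argument to identify $\widehat{\mathcal B}$ with the Borel $\sigma$-algebra of $\Sigma_A^+$, and pass to signed measures through $\mu^\pm$.

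One small precision. Theorem \ref{rok}(a) gives $\mu_\gamma(\gamma)=1$ only for $\hat\mu$-a.e.\ $\gamma$, not for $m$-a.e.\ $\gamma$. The identity for $\tilde c$ still holds $m$-a.e., however, because $\hat\mu=\phi_1 m$ and both sides vanish on $\{\phi_1=0\}$, so your almost-everywhere caveat covers it.
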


To generalize the notion of restriction to signed measures $\mu \in \mathcal{AB}_m$, we employ the Jordan decomposition $\mu = \mu^+ - \mu^-$. The \textbf{restriction of $\mu$ to the fiber $\gamma$} is thus defined as:
\begin{equation}
\mu |_{\gamma} := \mu^+ |_{\gamma} - \mu^- |_{\gamma}.
\end{equation}

\begin{remark}\label{ghtyhh}
    It is important to emphasize that this definition is well-posed and independent of the specific decomposition of $\mu$. As shown in Proposition 9.12 of \cite{DR}, if $\mu$ is expressed as the difference $\mu_1 - \mu_2$ of any two positive measures, then for $m$-almost every $\underline{x} \in \Sigma_A^+$, the equality $\mu|_\gamma = \mu_1|_\gamma - \mu_2|_\gamma = \mu^+|_\gamma - \mu^-|_\gamma$ holds, where $\gamma = \gamma_{\underline{x}}$.
\end{remark}

To quantify the discrepancy between signed measures on $X$, we introduce a variation of the Kantorovich-Wasserstein metric:

\begin{definition} \label{wasserstein}
    For any two signed measures $\mu$ and $\nu$ defined on $X$, the \textbf{Wasserstein-Kantorovich Like} distance is given by the following dual formulation:
    \begin{equation}
    W_{1}^{0}(\mu, \nu) = \sup \left\{ \left| \int g \, d\mu - \int g \, d\nu \right| : L_X(g) \le 1, \, \|g\|_{\infty} \le 1 \right\}.
    \end{equation}
\end{definition}

In the following, we adopt the notation:
\begin{equation}\label{WW}
\|\mu\|_W := W_{1}^{0}(0, \mu).
\end{equation}
It is well known that $\|\cdot\|_W$ constitutes a well-defined norm on the vector space of signed measures over a compact metric space. Notably, this norm is equivalent to the one induced by the dual of the space of Lipschitz functions.

The efficacy of this metric in establishing limit theorems has been explored in several works, including \cite{GP, ben, RRR, RR}. For instance, the authors in \cite{ben} utilize this framework to investigate systems with shrinking fibers, a context closely related to our own.

\begin{remark}
    For the sake of notation, the norm of the fiberwise restriction of $\mu$ is represented as $\|\mu|_\gamma\|_W := W_{1}^{0}(\mu^+|_\gamma, \mu^-|_\gamma)$.
\end{remark}

\subsection{The Space of Lipschitz Measures: $\mathcal{L}_\theta$}\label{lip}

As previously established, any positive measure $\mu$ on the product space $\Sigma_A^+ \times K$ can be decomposed along the stable lamination $\mathcal{F}^s$. This disintegration allows us to interpret $\mu$ as a collection of localized measures $\{\mu|_\gamma\}_{\gamma \in \mathcal{F}^s}$ on the fiber $K$. Given the natural bijection between the leaves in $\mathcal{F}^s$ and the base space $\Sigma_A^+$, this construction effectively defines a mapping:
\begin{equation*}
    \Sigma_A^+ \longrightarrow \mathcal{SB}(K),
\end{equation*}
where the space of signed measures $\mathcal{SB}(K)$ is equipped with the Wasserstein-Kantorovich Like metric introduced in Definition \ref{wasserstein}.

For analytical convenience, we adopt a functional perspective by representing this assignment as a path $\Gamma_\mu: \Sigma_A^+ \to \mathcal{SB}(K)$. Formally, for a given disintegration $(\{\mu_\gamma\}_{\gamma \in \mathcal{F}^s}, \phi_1)$ of $\mu$, the path is defined almost everywhere by the relation $\Gamma_\mu(\gamma) = \mu|_\gamma$.

It is important to note that since the disintegration is uniquely determined only $\hat{\mu}$-almost everywhere, the representative path $\Gamma_\mu$ is not strictly unique. To ensure a well-defined framework, we treat $\Gamma_\mu$ as an equivalence class of paths that coincide almost everywhere, thereby eliminating ambiguity arising from the choice of disintegration.

\begin{definition}\label{defd}
	Let $\mu$ be a Borel signed measure on $\Sigma$ and consider a disintegration $\omega = (\{\mu_{\gamma_{\underline{x}}}\}_{\underline{x} \in \Sigma_A^+}, \phi_1)$, where the family of measures $\{\mu_{\gamma_{\underline{x}}}\}$ is defined $m$-almost everywhere and $\phi_1: \Sigma_A^+ \to \mathbb{R}$ denotes the marginal density. We define the \textbf{class of equivalent paths} associated with $\mu$, denoted by $\Gamma_{\mu}$, as the collection:
	\begin{equation*}
	\Gamma_{\mu} = \{ \Gamma^\omega_{\mu} \}_\omega,
	\end{equation*}
	where $\omega$ varies over all valid disintegrations of $\mu$. For a specific choice of $\omega$, the representative mapping $\Gamma^\omega_{\mu}: \Sigma_A^+ \to \mathcal{SB}(K)$ is given by the fiberwise restriction:
	$$\Gamma^\omega_{\mu}(\underline{x}) = \mu |_{\gamma} = \pi_{2, \gamma}^* \left( \phi_1(\underline{x}) \mu_{\gamma} \right), \quad \text{with } \gamma = \gamma_{\underline{x}}.$$
\end{definition}

For each representative $\Gamma_{\mu}^\omega$, we denote its domain of definition by $I_{\Gamma_{\mu}^\omega} \subset \Sigma_A^+$. To simplify the notation in the following definitions, we fix $\gamma_1 = \gamma_{\underline{x}^1}$ and $\gamma_2 = \gamma_{\underline{x}^2}$ for any pair $(\underline{x}^1, \underline{x}^2) \in \Sigma_A^+ \times \Sigma_A^+$.

\begin{definition} \label{Lips3}
    Consider a specific disintegration $\omega$ of $\mu$ and its corresponding functional representation $\Gamma_{\mu}^\omega$. The \textbf{Lipschitz constant of $\mu$ relative to $\omega$} is defined as:
    \begin{equation}\label{Lips1}
    |\mu|_\theta^\omega := \operatorname{ess\,sup}_{\underline{x}^1, \underline{x}^2 \in I_{\Gamma_{\mu}^\omega}} \left\{ \frac{\|\mu|_{\gamma_1} - \mu|_{\gamma_2}\|_W}{d_\theta(\underline{x}^1, \underline{x}^2)} \right\},
    \end{equation}
    where the essential supremum is taken with respect to the Markov measure $m$. Building upon this, we define the \textbf{Lipschitz constant} of the measure $\mu$ by taking the infimum over all equivalent representations:
    \begin{equation}\label{Lips2}
    |\mu|_\theta := \inf_{\Gamma_{\mu}^\omega \in \Gamma_{\mu}} \{ |\mu|_\theta^\omega \}.
    \end{equation}
\end{definition}

\begin{remark}
    To maintain a concise notation, we shall write $\mu|_\gamma$ instead of $\Gamma_{\mu}^\omega(\underline{x})$ whenever the specific choice of disintegration $\omega$ is clear from the context.
\end{remark}

\begin{definition} \label{erfcscvdsd}
    Building upon Definition \ref{Lips3}, we introduce the space of Lipschitz measures, denoted by $\mathcal{L}_\theta$, which consists of all measures in $\mathcal{AB}_m$ with a finite Lipschitz constant:
    \begin{equation}
    \mathcal{L}_{\theta} = \{ \mu \in \mathcal{AB}_m : |\mu|_\theta < \infty \}.
    \end{equation}
\end{definition}

In the subsequent analysis, we consider the fiber map $F_\gamma: K \to K$ induced by the global dynamics $F$. This map is defined for each leaf $\gamma = \gamma_{\underline{x}}$ as the composition:
\begin{equation}\label{poier}
    F_\gamma(y) = \pi_2 \circ F \circ (\pi_2|_{\gamma})^{-1}(y),
\end{equation}
where $\pi_2: \Sigma_A^+ \times K \to K$ represents the canonical projection onto the second coordinate, $\pi_2(\underline{x}, y) = y$. Furthermore, in the technical proofs that follow, we shall consistently use the notation $\gamma_1 = \gamma_{\underline{x}^1}$ and $\gamma_2 = \gamma_{\underline{x}^2}$ for any points $\underline{x}^1, \underline{x}^2 \in \Sigma_A^+$.

\begin{proposition}\label{ttty}
    The Lipschitz constant of the marginal density $\phi_1$ is bounded by the Lipschitz constant of the measure $\mu$. Specifically, for every $\mu \in \mathcal{L}_\theta$, we have:
    \begin{equation}\label{uit}
        |\phi_{1}|_\theta \leq |\mu|_\theta.
    \end{equation}
\end{proposition}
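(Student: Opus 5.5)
The idea is to test the Wasserstein norm of the difference of fiber restrictions against the constant function $g\equiv 1$ on $K$. This function is admissible in Definition \ref{wasserstein}, since $L_K(g)=0\le 1$ and $\|g\|_\infty = 1$. Integrating $g$ against a fiber restriction recovers the marginal density, so this single test function turns the Lipschitz control on $\mu$ into Lipschitz control on $\phi_1$.

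First I would fix an arbitrary disintegration $\omega$ of $\mu$ with $|\mu|^\omega_\theta<\infty$. Next I would compute the total mass of the restriction $\mu|_{\gamma}$ for $m$-a.e.\ $\underline{x}$. For the positive parts, $\mu^\pm|_\gamma(K)=\phi_1^\pm(\underline{x})\,\mu^\pm_\gamma(\gamma)=\phi_1^\pm(\underline{x})$, using item (a) of Theorem \ref{rok}. Here $\phi_1^\pm = d\pi_1^*\mu^\pm/dm$.

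For a signed $\mu$, the marginal density is $\phi_1=\phi_1^+-\phi_1^-$, the density of $\pi_1^*\mu$. By Remark \ref{ghtyhh}, for $m$-a.e.\ $\underline{x}$ we then have
\[
\int_K 1\, d(\mu|_{\gamma_{\underline{x}}}) = \phi_1^+(\underline{x})-\phi_1^-(\underline{x}) = \phi_1(\underline{x}).
\]
Applying the dual formula with $g\equiv 1$ at two points then gives, for $m$-a.e.\ $\underline{x}^1,\underline{x}^2$ in $I_{\Gamma^\omega_\mu}$,
\[
|\phi_1(\underline{x}^1)-\phi_1(\underline{x}^2)| = \left|\int 1\, d(\mu|_{\gamma_1}-\mu|_{\gamma_2})\right| \le \|\mu|_{\gamma_1}-\mu|_{\gamma_2}\|_W .
\]
Dividing by $d_\theta(\underline{x}^1,\underline{x}^2)$ and taking the essential supremum yields $|\phi_1|_\theta\le|\mu|^\omega_\theta$. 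Taking the infimum over $\omega$ gives \eqref{uit}.

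The main obstacle is the \emph{almost everywhere} issue. The estimate above holds only on a full-measure set, while $|\phi_1|_\theta$ is a genuine supremum over all pairs. The resolution is that $\phi_1$ is itself defined only $m$-a.e., as a Radon–Nikodym density. I would argue that the estimate makes $\phi_1$ Lipschitz on a full-measure set $I\subset\Sigma_A^+$. Since $m$ is a Markov measure for an aperiodic $A$, it charges every cylinder, so $I$ is dense. The restriction of $\phi_1$ to $I$ is uniformly continuous and therefore extends uniquely to a Lipschitz function on $\Sigma_A^+$ with the same constant. Choosing this representative, the bound holds for every pair of points, which is the natural meaning of $|\phi_1|_\theta$ in this context.

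A secondary point is that the essential supremum in Definition \ref{Lips3} is over pairs, taken with respect to $m\times m$. So the exceptional set of pairs must be handled by the density argument as well. Density of $I\times I$ together with continuity of both sides off the diagonal handles this.
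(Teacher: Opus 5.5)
Your argument is correct and is the same as the paper's: test the Wasserstein norm against $g\equiv 1$ to get $|\phi_1(\underline{x}^1)-\phi_1(\underline{x}^2)|\le\|\mu|_{\gamma_1}-\mu|_{\gamma_2}\|_W$, divide by $d_\theta(\underline{x}^1,\underline{x}^2)$, take the essential supremum, and then take the infimum over $\omega$. Your discussion of representatives goes beyond the paper, which ignores that issue, but one step needs a different justification: to pass from an $m\times m$-a.e.\ bound on pairs to a bound on all pairs of a full-measure set $I$, use Fubini (for a.e.\ $\underline{x}^1$ the exceptional section is $m$-null) and the triangle inequality through an auxiliary point close to $\underline{x}^2$ (which exists because $m$ charges every cylinder), not continuity of $\phi_1$, since that is not yet known at that stage.
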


\begin{proof}
    Consider an arbitrary disintegration $\omega = (\{\mu_\gamma\}_{\gamma \in M}, \phi_1)$ associated with a measure $\mu \in \mathcal{L}_\theta$. By invoking Definition \ref{restrictionmeasure} and the property of the Wasserstein-like norm $\|\cdot\|_W$ established in \eqref{WW}, we observe that:
    \begin{eqnarray*}
        |\phi_{1}(\underline{x}_1) - \phi_{1}(\underline{x}_2)| &=& \left| \mu|_{\gamma_1}(K) - \mu|_{\gamma_2}(K) \right| \\
        &=& \left| \int_K 1 \, d\mu|_{\gamma_1} - \int_K 1 \, d\mu|_{\gamma_2} \right| \\
        &\leq& \|\mu|_{\gamma_1} - \mu|_{\gamma_2}\|_W.
    \end{eqnarray*}
    By normalizing both sides of the inequality by the distance $d_\theta(\underline{x}_1, \underline{x}_2)$ and applying the essential supremum over $\Sigma_A^+$, it follows that:
    $$|\phi_{1}|_\theta \leq |\mu|_\theta^\omega.$$
    Since this bound holds for any choice of disintegration, taking the infimum over the set of all valid $\omega \in \Gamma_\mu$ yields the desired result $|\phi_1|_\theta \leq |\mu|_\theta$, concluding the proof.
\end{proof}

\subsection{The spaces $\mathcal{L}^\infty$ and $S^\infty$}\label{lp}

While the space $\mathcal{L}_\theta$ focuses on the regularity of the paths in the space of measures, it is equally necessary to quantify the uniform boundedness of the fiberwise restrictions. In what follows, we introduce the space $\mathcal{L}^\infty$ of measures with bounded total variation across the fibers, and the space $S^\infty$, which combines this boundedness with the Lipschitz regularity of the marginal density.

\begin{definition}\label{L}
We define the space $\mathcal{L}^{\infty} \subseteq \mathcal{AB}_m(\Sigma)$ as the set of measures whose fiberwise Wasserstein norm is essentially bounded:
\begin{equation}
\mathcal{L}^{\infty} := \left\{ \mu \in \mathcal{AB}_m : \operatorname{ess\,sup}_{\underline{x} \in \Sigma_A^+} \{ \|\mu|_{\gamma_{\underline{x}}}\|_W \} < \infty \right\},
\end{equation}
where the essential supremum is taken over $\Sigma^+_A$ with respect to the Markov measure $m$. This space is equipped with the mapping $\|\cdot\|_\infty: \mathcal{L}^\infty \to \mathbb{R}$, defined as:
\begin{equation}
\|\mu\|_\infty = \operatorname{ess\,sup}_{\underline{x} \in \Sigma_A^+} \{ W_1^0(\mu^+|_{\gamma_{\underline{x}}}, \mu^-|_{\gamma_{\underline{x}}}) \}.
\end{equation}
\end{definition}

Finally, we characterize a refined class of signed measures that integrate both the fiberwise magnitude and the regularity of the base marginal.

\begin{definition}\label{iuytietrsdf}
The space $S^\infty$ is defined as the intersection:
\begin{equation}\label{si}
S^{\infty} = \left\{ \mu \in \mathcal{L}^{\infty} : \phi_1 \in \mathcal{F}_{\theta}(\Sigma^+_A) \right\}, 
\end{equation}
where $\phi_1 = \frac{d(\pi_1^* \mu)}{dm}$ is the Radon-Nikodym derivative of the marginal. We endow $S^\infty$ with the norm:
\begin{equation}
\|\mu\|_{S^{\infty}} = \|\phi_1\|_{\theta} + \|\mu\|_{\infty}.
\end{equation}
\end{definition}

It can be verified that the pairs $(\mathcal{L}^{\infty}, \|\cdot\|_\infty)$ and $(S^{\infty}, \|\cdot\|_{S^\infty})$ constitute well-defined normed vector spaces (see, for instance, \cite{L}). Regarding the measurability of the fiberwise assignments, let $\mathcal{SB}(K)$ be equipped with the Borel $\sigma$-algebra induced by the Wasserstein-Kantorovich Like metric. By Lemma \ref{mens}, the mapping $\tilde{c}: \Sigma_A^+ \to \mathcal{SB}(K)$, defined by $\underline{x} \mapsto \mu|_{\gamma_{\underline{x}}}$, is measurable. This follows from Theorem \ref{rok}, noting that the disintegration map $\underline{x} \mapsto \mu_{\gamma_{\underline{x}}}$ from $\Sigma_A^+$ to $\mathcal{SB}(\Sigma)$, the marginal density $\phi_1: \Sigma_A^+ \to \mathbb{R}$, and the push-forward operator $\pi^*_{2,\gamma}: \mathcal{SB}(\Sigma) \to \mathcal{SB}(K)$ are all measurable functions.

\begin{example} \label{dereroidf}
    Let $\mathcal{C} := \{ \varphi_1, \varphi_2, \dots, \varphi_N \}$ be the finite collection of contractions $\varphi_i: K \to K$ introduced in Example \ref{kjfhjsfg}. We consider the associated Hutchinson invariant measure defined by the relation \eqref{nvbbjdjfdf}. Given that the product measure $m \times \mu$ belongs to the space $S^\infty$, Theorem \ref{probun} implies that:
    \begin{equation}\label{oyiuotyu}
    \mu_0 = m \times \mu.
    \end{equation}
    Building upon the framework established in \cite{GLu} and \cite{RRRSTAB}, we anticipate that this formalism allows for the investigation of statistical stability (specifically, the computation of the modulus of continuity) for the Hutchinson measure $\mu$ when the underlying IFS is subject to deterministic perturbations.
\end{example}

\section{The Transfer Operator and its Functional Properties}\label{eryet}

In this section, we investigate the transfer operator $\func{F^*}$ associated with the skew-product $F$. For any signed Borel measure $\mu \in \mathcal{SB}(\Sigma)$, the action of $\func{F^*}$ is defined by the push-forward:
\begin{equation*}
[F^* \mu](E) = \mu(F^{-1}(E)),
\end{equation*}
for every measurable set $E \subset \Sigma$. Our goal is to characterize how this operator acts on the specialized spaces $\mathcal{L}_\theta$, $\mathcal{L}^\infty$, and $S^\infty$.

To facilitate the upcoming analysis, we define the partition elements $P_i := [0; i] = \{ \underline{x} = (x_m)_{m \in \mathbb{N}} \in \Sigma_A^+ : x_0 = i \}$ and let $\sigma_i := \sigma|_{P_i}$ represent the restriction of the shift map to each cylinder, for $i \in \{1, \dots, N\}$. Furthermore, for any measure $\mu \in \mathcal{AB}_m$, we denote its disintegration along the stable lamination $\mathcal{F}^s$ by the pair $(\{\mu_\gamma\}_\gamma, \phi_1)$, where $\phi_1 = \frac{d(\pi_1^* \mu)}{dm}$ is the corresponding marginal density.

The following lemma establishes the fundamental transformation rules for the marginal density and the fiber measures under the action of $\func{F^*}$. As this result was previously demonstrated in \cite{DR}, we omit its proof here.

\begin{lemma}\label{transformula}
If $\mu \in \mathcal{AB}_m$ is a probability measure with marginal density $\phi_1$, then its image $F^* \mu$ also belongs to $\mathcal{AB}_m$, and its new marginal density satisfies:
\begin{equation}
\frac{d\pi_1^*(\func{F^*} \mu)}{dm} = \mathcal{P}_{\sigma}(\phi_1), \label{1}
\end{equation}
where $\mathcal{P}_{\sigma}$ is the Ruelle-Perron-Frobenius operator associated with the shift. Moreover, let $\underline{x} = (x_n)_{n \in \mathbb{Z}^+} \in \Sigma_A^+$ and denote the fibers by $\gamma = \gamma_{\underline{x}}$ and $\gamma_i = \gamma_{\sigma_i^{-1}(\underline{x})}$. The disintegrated measures of the image, $(\func{F^*} \mu)_\gamma = \nu_\gamma$, are given by:
\begin{equation}
\nu_\gamma = \frac{1}{\mathcal{P}_{\sigma}(\phi_1)(\underline{x})} \sum_{i=1}^N \left( \frac{\phi_1}{J_{m, \sigma_i}} \circ \sigma_i^{-1}(\underline{x}) \right) \cdot \chi_{\sigma_i(P_i)}(\underline{x}) \cdot \func{F^*} \mu_{\gamma_i}, \label{2}
\end{equation}
provided that $\mathcal{P}_{\sigma}(\phi_1)(\underline{x}) \neq 0$. In the case where $\mathcal{P}_{\sigma}(\phi_1)(\underline{x}) = 0$, we define $\nu_\gamma$ as the normalized Lebesgue measure on the fiber $\gamma$. The summation is taken over indices $i$ such that the transition $ix_0$ is admissible ($A_{i,x_0} = 1$). Throughout this work, $\chi_E$ denotes the characteristic function of the set $E$.
\end{lemma}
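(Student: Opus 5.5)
The plan is to verify the two claims separately: first the formula for the marginal density, then the formula for the disintegration. In each case we use the uniqueness part of Rokhlin's theorem (Lemma \ref{kv}). The whole argument consists of testing against sets of product form $E=B\times C$, with $B\subset\Sigma_A^+$ and $C\subset K$ measurable. Since such rectangles generate the product $\sigma$-algebra (which has a countable generator), a family of measures that reproduces $F^*\mu$ on all rectangles is the disintegration $m$-a.e.

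\textbf{Marginal.} For $B\subset\Sigma_A^+$ we have $\pi_1\circ F=\sigma\circ\pi_1$, so
\[
\pi_1^*(F^*\mu)(B)=\mu(\pi_1^{-1}\sigma^{-1}B)=\int \chi_B\circ\sigma\,\phi_1\,dm=\int \chi_B\,\mathcal P_\sigma(\phi_1)\,dm,
\]
where the last equality is the duality relation defining $P_\sigma$. This gives absolute continuity, so $F^*\mu\in\mathcal{AB}_m$ (positivity is preserved), together with \eqref{1}.

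\textbf{Fibers.} Fix a rectangle $B\times C$. Decompose $\sigma^{-1}(B)=\bigcup_i \sigma_i^{-1}(B\cap\sigma(P_i))$ over the cylinders $P_i$. Then
\[
F^*\mu(B\times C)=\sum_i\int_{\sigma_i^{-1}(B\cap\sigma(P_i))}\mu_{\gamma_{\underline{y}}}(F^{-1}(B\times C))\,\phi_1(\underline{y})\,dm(\underline{y}).
\]
For $\underline{y}$ in this set, $F$ maps $\gamma_{\underline{y}}$ into $\gamma_{\sigma\underline{y}}$ with $\sigma\underline y\in B$, so $\mu_{\gamma_{\underline y}}(F^{-1}(B\times C))=F^*\mu_{\gamma_{\underline y}}(\Sigma_A^+\times C)$.

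Next change variables $\underline{x}=\sigma_i(\underline y)$. Since $\sigma_i$ is injective with Jacobian $J_{m,\sigma_i}$, we have $dm(\underline y)=\big(1/J_{m,\sigma_i}\circ\sigma_i^{-1}\big)(\underline x)\,dm(\underline x)$ on $\sigma(P_i)$. Substituting,
\[
F^*\mu(B\times C)=\int_B \sum_i\Big(\frac{\phi_1}{J_{m,\sigma_i}}\circ\sigma_i^{-1}\Big)(\underline x)\,\chi_{\sigma_i(P_i)}(\underline x)\,F^*\mu_{\gamma_i}(\gamma_{\underline x}\cap E)\,dm(\underline x).
\]
Here each $F^*\mu_{\gamma_i}$ is supported on $\gamma_{\underline x}$. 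The integrand summed over $i$ without $C$ equals $\mathcal P_\sigma(\phi_1)(\underline x)$, because $\mathcal P_\sigma$ has exactly this expression as a sum over inverse branches. Therefore $\nu_\gamma$ as in \eqref{2} is a probability measure on $\gamma$, and
\[
F^*\mu(E)=\int\nu_{\gamma_{\underline x}}(E)\,\mathcal P_\sigma(\phi_1)\,dm
\]
holds for rectangles.

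\textbf{Conclusion.} On the set where $\mathcal P_\sigma(\phi_1)=0$ the choice of $\nu_\gamma$ is irrelevant, since that set is null for the quotient measure. Measurability of $\underline x\mapsto\nu_\gamma(E)$ follows from the measurability of the $\mu_\gamma$ and of the finitely many branches $\sigma_i^{-1}$. A $\pi$-$\lambda$ argument extends the identity from rectangles to all measurable $E$, and Lemma \ref{kv} then gives the claim.

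The main obstacle I expect is the change-of-variables step. It requires the Jacobian $J_{m,\sigma_i}$ of the Markov measure to exist, and it requires identifying $\mathcal P_\sigma\phi_1(\underline x)=\sum_i (\phi_1/J_{m,\sigma_i})\circ\sigma_i^{-1}(\underline x)\chi_{\sigma_i(P_i)}(\underline x)$. I would justify both from the injectivity of $\sigma$ on cylinders and the Markov property, deriving the transfer operator formula from the duality definition.
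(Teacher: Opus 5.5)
The paper gives no proof of this lemma and defers to \cite{DR}. Your argument is the standard proof of this transfer formula (as in \cite{GLu}) and it is correct: you obtain the marginal from the duality defining $\mathcal{P}_\sigma$, treat the fiber measures by splitting $\Sigma_A^+$ into the cylinders $P_i$ and changing variables $\underline{x}=\sigma_i(\underline{y})$ with Jacobian $J_{m,\sigma_i}$, and conclude by uniqueness of the disintegration (Lemma \ref{kv}).

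One simplification is available: the reduction to rectangles and the $\pi$-$\lambda$ step are unnecessary. Since $\mu_{\gamma_{\underline{y}}}(F^{-1}(E))=F^*\mu_{\gamma_{\underline{y}}}(E)$ holds for every measurable $E$ by definition of the push-forward, the same computation gives $F^*\mu(E)=\int \nu_{\gamma_{\underline{x}}}(E)\,\mathcal{P}_\sigma(\phi_1)(\underline{x})\,dm(\underline{x})$ directly for all $E$.
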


Recall from Remark \ref{ghtyhh} that the fiberwise restriction $\mu|_\gamma$ is well-defined and independent of the choice of decomposition. Consequently, for any $\mu \in \mathcal{L}^\infty$, we may utilize the Jordan decomposition $F^* \mu = F^*(\mu^+) - F^*(\mu^-)$. By applying Lemma \ref{transformula} to each positive component, we derive a refined expression for the action of the transfer operator on the fibers.

\begin{proposition} \label{niceformulaab}
Let $\gamma = \gamma_{\underline{x}}$ be a stable leaf and consider the fiber map $F_\gamma: K \to K$ defined by the conjugacy:
\begin{equation*}
F_{\gamma} = \pi_2 \circ F|_\gamma \circ \pi_{2,\gamma}^{-1}.
\end{equation*}
Then, for every measure $\mu \in \mathcal{L}^\infty$ and for $m$-almost every $\underline{x} \in \Sigma_A^+$, the restricted measure $(F^* \mu)|_\gamma$ satisfies:
\begin{equation} \label{niceformulaa}
(\func{F^*} \mu)|_\gamma = \sum_{i=1}^N \frac{\func{F_{\gamma_i}^*} (\mu|_{\gamma_i})}{J_{m,\sigma_i}(i\underline{x})} \cdot \chi_{\sigma_i(P_i)}(\underline{x}),
\end{equation}
where $i\underline{x} = \sigma_i^{-1}(\underline{x})$ and $\gamma_i = \gamma_{i\underline{x}}$. As before, the sum is taken over all admissible transitions $(ix_0)$, being zero otherwise.
\end{proposition}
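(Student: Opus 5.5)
The plan is to reduce to positive measures via the Jordan decomposition and then read off formula \eqref{niceformulaa} from Lemma \ref{transformula}. By Remark \ref{ghtyhh}, $(F^*\mu)|_\gamma$ may be computed from any decomposition of $F^*\mu$ as a difference of two positive measures. Taking $F^*\mu = F^*\mu^+ - F^*\mu^-$ gives $(F^*\mu)|_\gamma = (F^*\mu^+)|_\gamma - (F^*\mu^-)|_\gamma$ for $m$-a.e. $\underline{x}$. The right-hand side of \eqref{niceformulaa} is linear in $\mu|_{\gamma_i} = \mu^+|_{\gamma_i} - \mu^-|_{\gamma_i}$, since $F_{\gamma_i}^*$ is linear. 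It therefore suffices to prove the formula for a positive $\mu\in\mathcal{AB}_m$. After normalizing by the total mass (both sides are homogeneous of degree one), we may take $\mu$ to be a probability measure, which is exactly the setting of Lemma \ref{transformula}.

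For a probability $\mu$ with marginal density $\phi_1$, the image $F^*\mu$ lies in $\mathcal{AB}_m$ with marginal density $\mathcal{P}_\sigma(\phi_1)$ and fiber measures $\nu_\gamma$ given by \eqref{2}. By the definition of restriction,
\[
(F^*\mu)|_\gamma = \pi_{2,\gamma}^*\bigl(\mathcal{P}_\sigma(\phi_1)(\underline{x})\,\nu_\gamma\bigr).
\]
Where $\mathcal{P}_\sigma(\phi_1)(\underline{x})\neq 0$, the prefactor cancels the normalization in \eqref{2}. Using linearity of $\pi_{2,\gamma}^*$, we obtain
\[
(F^*\mu)|_\gamma=\sum_i \frac{\phi_1(i\underline{x})}{J_{m,\sigma_i}(i\underline{x})}\chi_{\sigma_i(P_i)}(\underline{x})\,\pi_{2,\gamma}^*F^*\mu_{\gamma_i}.
\]
The key identity to check is
\[
\pi_{2,\gamma}^* F^*\mu_{\gamma_i} = F_{\gamma_i}^*\bigl(\pi_{2,\gamma_i}^*\mu_{\gamma_i}\bigr).
\]
This holds because $\mu_{\gamma_i}$ is concentrated on $\gamma_i$ and $F$ maps $\gamma_i$ into $\gamma$, where $F|_{\gamma_i}=\pi_{2,\gamma}^{-1}\circ F_{\gamma_i}\circ\pi_{2,\gamma_i}$. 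Composing push-forwards gives the claim. Multiplying by $\phi_1(i\underline{x})$ turns $\pi_{2,\gamma_i}^*(\phi_1(i\underline{x})\mu_{\gamma_i})$ into $\mu|_{\gamma_i}$, which yields \eqref{niceformulaa}.

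It remains to handle the degenerate set where $\mathcal{P}_\sigma(\phi_1)(\underline{x})=0$. There the left-hand side is zero, because the restriction carries the factor $\mathcal{P}_\sigma(\phi_1)(\underline{x})=0$ regardless of the arbitrary choice of $\nu_\gamma$. Since $\mathcal{P}_\sigma(\phi_1)(\underline{x})=\sum_i \phi_1(i\underline{x})/J_{m,\sigma_i}(i\underline{x})$ over admissible $i$ and each term is nonnegative, every $\phi_1(i\underline{x})$ vanishes. Hence every $\mu|_{\gamma_i}=0$ and the right-hand side is zero as well.

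The main obstacle is the bookkeeping of ``almost everywhere'' statements. The disintegration of $\mu$ is defined only for $m$-a.e. leaf, and we evaluate it at the preimages $i\underline{x}$. We must check that, for $m$-a.e. $\underline{x}$, every admissible $i\underline{x}$ lies in the good set. This follows because $\sigma_i$ is nonsingular with respect to the Markov measure $m$ (it has Jacobian $J_{m,\sigma_i}$), so $\sigma_i$ maps null sets to null sets. We must likewise check that the exceptional sets arising from Remark \ref{ghtyhh} for $\mu^\pm$ and for $F^*\mu^\pm$ form a finite union of null sets. With these points in place, the identity holds for $m$-a.e. $\underline{x}$.
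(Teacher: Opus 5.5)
Your proposal is correct and follows essentially the same route as the paper: the paper obtains \eqref{niceformulaa} by writing $F^*\mu = F^*\mu^+ - F^*\mu^-$, using Remark \ref{ghtyhh} to compute restrictions from this decomposition (which need not be the Jordan decomposition of $F^*\mu$), and applying Lemma \ref{transformula} to each positive part. Your normalization to probability measures, the push-forward identity $\pi_{2,\gamma}^*F^*\mu_{\gamma_i} = F_{\gamma_i}^*(\pi_{2,\gamma_i}^*\mu_{\gamma_i})$, the treatment of the set where $\mathcal{P}_\sigma(\phi_1)$ vanishes, and the null-set bookkeeping via nonsingularity of the branches $\sigma_i$ make explicit details that the paper leaves implicit.
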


The contractive properties of the transfer operator and the existence of its invariant measure, which we present below, were established in \cite{DR}. Consequently, to maintain the focus on our current functional development, we omit the respective proofs.

\begin{lemma}\label{niceformulaac}
For every measure $\mu \in \mathcal{AB}_m$ and for $m$-almost every stable leaf $\gamma \in \mathcal{F}^s$, the following inequality holds:
\begin{equation}\label{weak1}
\| \func{F_\gamma^*} (\mu|_\gamma) \|_W \leq \| \mu|_\gamma \|_W,
\end{equation}
where $F_\gamma: K \to K$ is the fiber map defined in Proposition \ref{niceformulaab}. Furthermore, if $\mu$ is a probability measure on $K$, then:
\begin{equation}\label{simples}
\| \mu \|_W = 1.
\end{equation}
In particular, since the push-forward $\func{F^*} \mu$ of a probability measure remains a probability measure, we have $\| \func{F^*} \mu \|_W = 1$.
\end{lemma}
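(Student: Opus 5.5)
The statement has two parts: the weak contraction \eqref{weak1} on fibers, and the normalization \eqref{simples} for probability measures. Both follow by duality from Definition \ref{wasserstein}. The strategy is to pull test functions back through the fiber map $F_\gamma$ and check that they stay admissible for the supremum defining $\|\cdot\|_W$.

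\textbf{Step 1: fix the leaf.} By hypothesis \textbf{G1}, for $m$-almost every $\underline{x}$ the map $F_\gamma = G(\underline{x},\cdot)$ satisfies $d(F_\gamma(y_1),F_\gamma(y_2)) \le \alpha\, d(y_1,y_2)$. Fix such a leaf $\gamma$. By Remark \ref{ghtyhh} the restriction $\mu|_\gamma$ is a well-defined finite signed measure on $K$ for almost every $\gamma$, so we may also restrict to leaves where this holds.

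\textbf{Step 2: pull back test functions.} Take any $g: K \to \mathbb{R}$ with $L_K(g)\le 1$ and $\|g\|_\infty \le 1$, and set $\tilde g = g\circ F_\gamma$. Then $\|\tilde g\|_\infty \le 1$ trivially, and
$$L_K(\tilde g) \le L_K(g)\,\alpha \le \alpha < 1.$$
So $\tilde g$ is admissible in the supremum defining $\|\mu|_\gamma\|_W$. Using the change of variables for push-forwards, applied to $\mu^+|_\gamma$ and $\mu^-|_\gamma$ separately,
$$\Big|\int g\, d\big(F_\gamma^*(\mu|_\gamma)\big)\Big| = \Big|\int g\circ F_\gamma \, d(\mu|_\gamma)\Big| \le \|\mu|_\gamma\|_W .$$
Taking the supremum over $g$ gives \eqref{weak1}. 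Note that the contraction rate $\alpha$ is not even needed here; it would suffice that $F_\gamma$ is $1$-Lipschitz.

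\textbf{Step 3: normalization.} Let $\mu$ be a probability measure on $K$. For any admissible $g$ we have $|\int g\, d\mu| \le \|g\|_\infty \mu(K) \le 1$. Equality is attained by the constant function $g\equiv 1$, which has $L_K(g)=0$ and $\|g\|_\infty = 1$. Hence $\|\mu\|_W = 1$.

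The final assertion follows by applying the same argument to $F^*\mu$, which is again a probability measure. Here the test class is taken on the relevant compact space; since $\operatorname{diam}(K)=1$, constants remain admissible. The test $g\equiv 1$ again gives the lower bound, and the total-mass bound gives the upper bound.

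The only delicate point is measure-theoretic bookkeeping. We must make sure the full-measure set of leaves where \textbf{G1} holds is intersected with the full-measure set where the fiber restriction is well defined, and that the push-forward identity is applied to the Jordan components separately. Neither step involves a genuine estimate.
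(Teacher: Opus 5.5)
Your proof is correct and is the standard duality argument. Pulling admissible test functions back through the $1$-Lipschitz fiber map $F_\gamma$ proves the contraction, and testing against $g\equiv 1$ gives the normalization; this is the argument of \cite{DR}, which the paper cites instead of reproducing the proof. One small point: the appeal to $\operatorname{diam}(K)=1$ is unnecessary, since a constant $c$ with $|c|\le 1$ is admissible on any metric space because its Lipschitz constant is $0$.
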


\begin{proposition}[Weak Contraction of the Fiberwise Norm] \label{l1}
Suppose that the map $F$ satisfies the contraction hypothesis (G1). If $\mu \in \mathcal{L}^\infty$, then the transfer operator $F^*$ is a weak contraction with respect to the $\|\cdot\|_\infty$ norm:
\begin{equation}\label{weakcontral11234}
\| \func{F^*} \mu \|_\infty \leq \| \mu \|_\infty.
\end{equation}
\end{proposition}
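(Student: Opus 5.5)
The plan is to combine the fiberwise transfer formula of Proposition \ref{niceformulaab} with the fiberwise weak contraction of Lemma \ref{niceformulaac}, and then use that the weights $1/J_{m,\sigma_i}$ sum to one. Fix $\mu \in \mathcal{L}^\infty$. By Remark \ref{ghtyhh}, the restriction $(F^*\mu)|_\gamma$ does not depend on the decomposition of $F^*\mu$ into positive parts, so Proposition \ref{niceformulaab} applies. For $m$-almost every $\underline{x}$, with $\gamma=\gamma_{\underline{x}}$, it gives
\[
(F^*\mu)|_\gamma=\sum_{i=1}^N \frac{F_{\gamma_i}^*(\mu|_{\gamma_i})}{J_{m,\sigma_i}(i\underline{x})}\,\chi_{\sigma_i(P_i)}(\underline{x}).
\]
Since $\|\cdot\|_W$ is a norm on signed measures on $K$, the triangle inequality applied to this finite sum, followed by \eqref{weak1} on each fiber $\gamma_i$, yields
\[
\|(F^*\mu)|_\gamma\|_W\le \sum_{i}\frac{\|\mu|_{\gamma_i}\|_W}{J_{m,\sigma_i}(i\underline{x})}\chi_{\sigma_i(P_i)}(\underline{x})\le \|\mu\|_\infty \sum_i \frac{\chi_{\sigma_i(P_i)}(\underline{x})}{J_{m,\sigma_i}(i\underline{x})}.
\]

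The last sum equals $P_\sigma(1)(\underline{x})$. This follows from the standard expression $P_\sigma\varphi(\underline{x})=\sum_i \frac{\varphi}{J_{m,\sigma_i}}(i\underline{x})\chi_{\sigma_i(P_i)}(\underline{x})$, which is also implicit in \eqref{1}. Because $m$ is $\sigma$-invariant, the duality relation defining $P_\sigma$ gives $P_\sigma 1=1$ $m$-a.e. The bound $\|(F^*\mu)|_\gamma\|_W\le\|\mu\|_\infty$ then holds for a.e. $\underline{x}$, and taking the essential supremum gives \eqref{weakcontral11234}.

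The main obstacle is the null-set bookkeeping. The bound $\|\mu|_{\gamma_i}\|_W\le\|\mu\|_\infty$ is only known for $m$-a.e. $i\underline{x}$, so we must check that the exceptional set pulls back to a null set of $\underline{x}$. Concretely, for a null set $E$ we need $\{\underline{x}: i\underline{x}\in E\}=\sigma_i(E\cap P_i)$ to be $m$-null. This holds because $\sigma_i$ is a bijection from $P_i$ onto $\sigma(P_i)$ with positive Jacobian for the Markov measure, i.e.\ $m(\sigma_i(E\cap P_i))=\int_{E\cap P_i}J_{m,\sigma_i}\,dm=0$. Intersecting the finitely many resulting full-measure sets over $i$ with the full-measure sets from Proposition \ref{niceformulaab} and \eqref{weak1} completes the argument.

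Hypothesis (G1) enters only through \eqref{weak1}. Since push-forward by any measurable map does not increase $\|\cdot\|_W$ when tested against $1$-Lipschitz functions bounded by $1$, under (G1) $g\circ F_\gamma$ remains admissible, so no constant is lost.
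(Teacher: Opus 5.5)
Your proof is correct and is essentially the argument the paper relies on. The paper defers Proposition \ref{l1} to \cite{DR}, but its proof of the perturbed analogue, Proposition \ref{agorasim}, runs exactly as yours: triangle inequality on the fiber formula, then \eqref{weak1}, then the bound by $\|\mu\|_\infty$, then $P_\sigma(1)=1$. Your null-set bookkeeping is a welcome extra that the paper skips.

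One correction to your closing remark: push-forward by an arbitrary measurable map can increase $\|\cdot\|_W$ for signed measures. It is precisely (G1), by keeping $g\circ F_\gamma$ $1$-Lipschitz, that gives \eqref{weak1}.
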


\begin{theorem}\label{probun}
Under the assumption that $F$ satisfies hypothesis (G1), there exists a unique $F$-invariant probability measure $\mu_0 \in S^\infty$. This measure is satisfies the following norm values:
\begin{equation*}
\| \mu_0 \|_\infty = 1 \quad \text{and} \quad \| \mu_0 \|_{S^\infty} = 2.
\end{equation*}
\end{theorem}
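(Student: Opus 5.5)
The plan is to obtain $\mu_0$ as the limit of iterates $\func{F^*}^n\nu$ of a product measure, with existence and uniqueness both driven by the spectral gap of $P_\sigma$ on the base and by the fiber contraction (G1). The invariant measure $m$ is fixed by $P_\sigma$, i.e.\ $P_\sigma 1 = 1$, and I take $\nu = m\times \lambda$ for an arbitrary probability $\lambda$ on $K$. Its marginal density is $\phi_1\equiv 1$, so by Lemma \ref{transformula} every iterate $\func{F^*}^n\nu$ has marginal density $P_\sigma^n 1 = 1$. The disintegrations of all iterates are therefore probability measures on $K$, and the base part of the problem is trivial along this orbit.

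The key step is to show that $(\func{F^*}^n\nu)$ is Cauchy in $\|\cdot\|_\infty$. I would iterate formula \eqref{niceformulaa} $n$ times. Since $\phi_1\equiv1$ and $m$ is invariant, the weights $\sum_i \chi_{\sigma_i(P_i)}/J_{m,\sigma_i}(i\underline{x})$ sum to $P_\sigma 1 = 1$. Hence $(\func{F^*}^n\nu)|_{\gamma_{\underline{x}}}$ is a convex combination, over admissible words $w$ of length $n$, of the measures $\func{F^{*}}_{w}\lambda$, where $F_w$ is the composition of $n$ fiber maps along the preimage $w\underline{x}$. Comparing $\func{F^*}^{n+k}\nu$ with $\func{F^*}^n\nu$ amounts to comparing $F_w^*(\eta)$ with $F_w^*(\lambda)$ for probabilities $\eta$, namely $\eta$ equal to the fiber restrictions of $\func{F^*}^k\nu$. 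For a test function $g$ with $L_K(g)\le1$ and $\|g\|_\infty\le1$, the composition $g\circ F_w$ has Lipschitz constant at most $\alpha^n$ by (G1). Since $\eta-\lambda$ has zero mass, we may subtract a constant and use $\mathrm{diam}(K)=1$ to get $|\int g\circ F_w\, d(\eta-\lambda)|\le \alpha^n$. This yields $\|\func{F^*}^{n+k}\nu-\func{F^*}^n\nu\|_\infty\le \alpha^n$.

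To pass to the limit I would work fiberwise. For $m$-a.e.\ $\underline{x}$ the fiber measures converge in $W^0_1$, and the probabilities on compact $K$ are complete for this metric (it metrizes weak-$*$ convergence). I would then assemble the limits $\mu_{\underline{x}}$ into a measure $\mu_0 := \int \mu_{\underline{x}}\, dm$. Measurability follows as a pointwise limit of measurable maps, using Lemma \ref{mens}. Invariance follows from continuity of $\func{F^*}$ in $\|\cdot\|_\infty$, which is Proposition \ref{l1} applied to differences. This gives $\mu_0\in S^\infty$ with $\phi_1\equiv 1$, so $\|\phi_1\|_\theta = 1$. By \eqref{simples}, $\|\mu_0\|_\infty=1$, and therefore $\|\mu_0\|_{S^\infty}=2$.

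For uniqueness, let $\mu\in S^\infty$ be an invariant probability with marginal density $\phi$. Invariance gives $P_\sigma\phi=\phi$, and the spectral decomposition with $\rho(\mathcal N_\sigma)<1$ (aperiodicity of $A$) makes the fixed space of $P_\sigma$ in $\mathcal F_\theta$ one-dimensional. Hence $\phi\equiv1$. The same $\alpha^n$ estimate then applies with $\nu$ replaced by $\mu$, since only the marginal $1$ and the fiber probabilities were used, and gives $\|\mu-\mu_0\|_\infty = \|\func{F^*}^n\mu-\func{F^*}^n\mu_0\|_\infty\le\alpha^n\to0$. Because $\|\cdot\|_\infty$ is a norm on $\mathcal L^\infty$, this forces $\mu=\mu_0$.

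I expect the main obstacle to be the zero-mass trick together with the iterated formula. The trick is legitimate only because the fiber measures have equal mass, and this equality is exactly what the normalization $\phi_1\equiv 1$ provides. The other delicate point is the measure-theoretic bookkeeping needed to glue the fiberwise limits into a genuine measure in $\mathcal{AB}_m$ whose restrictions are those limits.
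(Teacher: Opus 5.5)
Your argument is correct, and it takes a somewhat different route from the tools the paper relies on. The paper does not prove Theorem \ref{probun} at all; it defers the proof to \cite{DR}.

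\textbf{The route suggested by the paper's own tools.} The natural argument goes through Proposition \ref{5.8}. For uniqueness, if $\mu_1,\mu_2\in S^\infty$ are invariant probabilities, then $\mu_1-\mu_2$ has zero mass, so it lies in $\mathcal V$ by Remark \ref{rem123}. Hence $\|\mu_1-\mu_2\|_\infty=\|F^{*n}(\mu_1-\mu_2)\|_\infty\le D_2\beta_1^n\|\mu_1-\mu_2\|_{S^\infty}\to 0$. For existence, $F^{*n}(m\times\nu)$ is Cauchy because $m\times\nu-F^{*k}(m\times\nu)\in\mathcal V$ and has $S^\infty$-norm at most $2$.

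\textbf{What your route does differently.} You bypass Proposition \ref{5.8} and reprove its mechanism in the special case where the marginal density is already $1$. In that case every fiber restriction is a probability, differences have zero mass fiber by fiber, and (G1) alone gives the explicit rate $\alpha^n$ with constant $1$.

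\textbf{Trade-offs.} The cost is that, for uniqueness, you must first identify the marginal of an arbitrary invariant $\mu\in S^\infty$ as $m$. You can do this via the rank-one projection $\Pi_\sigma$, or more simply because $\pi_1^*\mu\ll m$ is $\sigma$-invariant and $m$ is ergodic. The $\mathcal V$-route does not need this step. The gain is a self-contained proof that uses only (G1), $P_\sigma 1=1$ and Proposition \ref{l1}, with sharp constants.

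\textbf{The limit step.} Your explicit treatment of the limit is a genuine part of the proof, not a formality. You use fiberwise completeness of probabilities on compact $K$ under $W^0_1$, measurability of the limit kernel, and the gluing $\mu_0=\int \mu_{\underline{x}}\,dm$. Some such argument is needed on either route, because the paper only asserts that $(\mathcal L^\infty,\|\cdot\|_\infty)$ is a normed space, not a complete one.

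\textbf{One small correction.} Subtracting an arbitrary constant from $g\circ F_w$ only gives the bound $2\alpha^n$, since the total variation of $\eta-\lambda$ can be $2$. To get $\alpha^n$, subtract the midpoint of the range of $g\circ F_w$, or use Kantorovich--Rubinstein duality together with $\mathrm{diam}(K)=1$. This changes nothing in the argument.
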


In this section, we formalize the asymptotic behavior of the transfer operator. Broadly speaking, we say that $F^*$ exhibits \textbf{convergence to equilibrium} with a decay rate $\Phi$ relative to the norms $\|\cdot\|_{S^{\infty}}$ and $\|\cdot\|_{\infty}$ if, for every measure $\mu$ in the subspace of vanishing marginals $\mathcal{V}$, defined by:
\begin{equation}\label{Vss}
\mathcal{V} = \left\{ \mu \in S^{\infty} : \frac{d\pi_1^* \mu}{dm} \in \ker(\Pi_{\sigma}) \right\},
\end{equation}
the following inequality holds:
\begin{equation}\label{wwe}
\|\func{F^{*n}} \mu \|_{\infty} \leq \Phi(n) \| \mu \|_{S^{\infty}},
\end{equation}
where $\Phi(n) \to 0$ as $n \to \infty$.

\begin{remark}\label{rem123}
    It is important to note that $\mathcal{V}$ naturally encompasses all signed measures with zero total mass (i.e., $\mu(\Sigma) = 0$). To see this, recall that the marginal density is given by $\phi_1 = \frac{d(\pi_1^* \mu)}{dm}$. Thus, for any such measure, the projection $\Pi_\sigma$ satisfies:
    \begin{equation*}
    \Pi_\sigma (\phi_1) = \int_{\Sigma_A^+} \phi_1 \, dm = \int_{\Sigma_A^+} d(\pi_1^* \mu) = \mu(\pi_1^{-1}(\Sigma_A^+)) = \mu(\Sigma) = 0.
    \end{equation*}
\end{remark}

The following result establishes that, under the contraction hypothesis (G1), the convergence rate $\Phi(n)$ is at least exponential. As demonstrated in \cite{DR}, we state the proposition without its proof.

\begin{proposition}[Exponential Convergence to Equilibrium] \label{5.8}
    Assume that the map $F$ satisfies hypothesis (G1). Then, there exist constants $D_2 > 0$ and $0 < \beta_1 < 1$ such that, for every signed measure $\mu \in \mathcal{V}$, the iterates of the transfer operator satisfy:
    \begin{equation*}
    \| \func{F^{*n}} \mu \|_{\infty} \leq D_2 \beta_1^n \| \mu \|_{S^{\infty}},
    \end{equation*}
    for all $n \geq 1$. \label{quasiquasiquasi}
\end{proposition}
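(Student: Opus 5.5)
We will split $\func{F^{*n}}\mu$ into a part that the base spectral gap kills and a part that the fiber contraction kills. The decomposition is at time $n=2k$ (odd $n$ is handled with one extra iterate, using Proposition \ref{l1}):
\[
\func{F^{*2k}}\mu=\func{F^{*k}}\bigl(\func{F^{*k}}\mu-\phi_k\,\mu_0\bigr)+\func{F^{*k}}(\phi_k\,\mu_0),
\qquad \phi_k:=\mathcal{P}_\sigma^k\phi_1 ,
\]
where $\phi_1$ is the marginal density of $\mu$. Here $\phi_k\,\mu_0$ denotes the measure whose fiber restriction is $\phi_k(\underline{x})\,\mu_0|_{\gamma_{\underline{x}}}$, that is, the lift of the base density $\phi_k$ using the fiber probabilities of the invariant measure $\mu_0$ from Theorem \ref{probun}. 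By Lemma \ref{transformula}, $\phi_k$ is the marginal of $\func{F^{*k}}\mu$. Since $\phi_1\in\ker\Pi_\sigma$, the spectral gap gives $\|\phi_k\|_\theta\le D r^k\|\phi_1\|_\theta$.

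\textbf{First term.} The measure $\func{F^{*k}}\mu-\phi_k\mu_0$ has zero marginal. I will show that $\func{F^*}$ contracts $\|\cdot\|_\infty$ by the factor $\alpha$ on measures with zero marginal. For such $\nu$, each fiber restriction $\nu|_\gamma$ has total mass zero. If $\|g\|_\infty\le1$ and $L(g)\le1$, then $g-g(y_0)$ has sup norm at most $\operatorname{diam}K=1$, so $\|\nu|_\gamma\|_W$ equals the Lipschitz-dual norm. Under (G1), $g\circ F_\gamma$ has Lipschitz constant at most $\alpha$, hence $\|\func{F_\gamma^*}\nu|_\gamma\|_W\le\alpha\|\nu|_\gamma\|_W$. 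Combining this with formula \eqref{niceformulaa} and $\sum_i \chi/J_{m,\sigma_i}\circ\sigma_i^{-1}=\mathcal{P}_\sigma1=1$ gives $\|\func{F^*}\nu\|_\infty\le\alpha\|\nu\|_\infty$. The class of zero-marginal measures is preserved by $\func{F^*}$, so
\[
\|\func{F^{*k}}(\func{F^{*k}}\mu-\phi_k\mu_0)\|_\infty\le\alpha^k\bigl(\|\mu\|_\infty+|\phi_k|_\infty\bigr)\le C\alpha^k\|\mu\|_{S^\infty},
\]
using Proposition \ref{l1} and $\|\mu_0\|_\infty=1$.

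\textbf{Second term.} Apply \eqref{niceformulaa} to $\phi_k\mu_0$ and compare it with the same formula for $\mu_0$, using invariance $\func{F^*}\mu_0=\mu_0$. This shows that $\func{F^{*k}}(\phi_k\mu_0)$ is a positive combination of pushed-forward fiber probabilities of $\mu_0$, weighted by $\phi_k$ along preimage branches. Its fiber $W$-norm is therefore bounded by $\mathcal{P}_\sigma^k|\phi_k|$, which is at most $|\phi_k|_\infty$ since $\mathcal{P}_\sigma^k 1=1$. Together with Proposition \ref{l1},
\[
\|\func{F^{*k}}(\phi_k\mu_0)\|_\infty\le\|\phi_k\mu_0\|_\infty\le|\phi_k|_\infty\le Dr^k\|\mu\|_{S^\infty}.
\]

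\textbf{Conclusion.} Adding the two bounds gives the claim with $\beta_1=\max(\alpha,r)^{1/2}$.

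\textbf{Main obstacle.} I expect the main difficulty to be making the lift $\phi_k\mu_0$ rigorous. One must verify that it lies in $\mathcal{L}^\infty$ with a measurable disintegration, via Lemma \ref{mens} and Remark \ref{ghtyhh}. One must also justify the identity $\|\nu|_\gamma\|_W$ equals the Lipschitz-dual norm when $\phi_k$ changes sign, in which case the Jordan parts of the fiber restrictions must be handled fiberwise. If this identity causes trouble, I would instead treat $\phi_k^+\mu_0$ and $\phi_k^-\mu_0$ separately.
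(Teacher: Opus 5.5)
Your proposal is correct and is essentially the argument behind this proposition in the cited source \cite{DR}, which goes back to \cite{GLu}: after roughly $n/2$ iterates you split off a lift of the marginal, contract the zero-marginal remainder by $\alpha$ per step (subtracting constants on zero-mass fiber measures, using $\operatorname{diam}K=1$), and control the lifted part by weak contraction (Proposition \ref{l1}) together with the base spectral gap. The one difference is cosmetic: the standard proof lifts the marginal with a fixed product measure $m\times\nu$ rather than with $\mu_0$, which removes your anticipated obstacle, though your $(\phi_k\circ\pi_1)\,\mu_0$ is equally rigorous, since its fiber restrictions are $\phi_k(\underline{x})\,\mu_0|_{\gamma_{\underline{x}}}$, with total mass $\phi_k(\underline{x})$ and $W$-norm at most $|\phi_k|_\infty$.
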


The following theorem stands as a cornerstone result of \cite{DR} and constitutes one of the primary analytical engines for the developments presented in this paper. Given its central role in establishing the asymptotic properties of our system, we state it below; however, for the sake of brevity, its proof is omitted here, and the interested reader is directed to \cite{DR} for the complete derivation.

\begin{theorem}[Spectral gap]\label{spgap}
    If the map $F$ satisfies the contraction hypothesis (G1), then the transfer operator $\func{F}^{\ast}: S^\infty \to S^\infty$ admits a decomposition of the form:
    \begin{equation*}
    \func{F}^{\ast} = \func{P} + \func{N},
    \end{equation*}
    where the operators $\func{P}$ and $\func{N}$ satisfy the following properties:
    \begin{enumerate}
        \item[a)] $\func{P}$ is a rank-one projection, i.e., $\func{P}^2 = \func{P}$ and $\dim \operatorname{Im}(\func{P}) = 1$;
        \item[b)] there exist constants $M > 0$ and $0 < \xi < 1$ such that, for every $\mu \in S^\infty$ and all $n \geq 1$:
        \begin{equation*}
        \| \func{N}^n(\mu) \|_{S^\infty} \leq M \xi^n \| \mu \|_{S^\infty};
        \end{equation*}
        \item[c)] the operators satisfy the commutativity and orthogonality relations $\func{P}\func{N} = \func{N}\func{P} = 0$.
    \end{enumerate}
\end{theorem}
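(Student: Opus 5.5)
The natural route is to build the spectral decomposition directly from the unique invariant probability $\mu_0 \in S^\infty$ of Theorem \ref{probun} and the exponential convergence to equilibrium of Proposition \ref{5.8}. Define
\[
\func{P}(\mu) := \mu(\Sigma)\,\mu_0, \qquad \func{N} := \func{F}^{\ast} - \func{P}.
\]

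\textbf{Step 1: $\func{P}$ is a bounded rank-one projection.} Its image is spanned by $\mu_0$. Since $\mu_0(\Sigma)=1$, we get $\func{P}^2=\func{P}$. For boundedness, $|\mu(\Sigma)| = |\int \phi_1\, dm| \le |\phi_1|_\infty \le \|\mu\|_{S^\infty}$, and $\|\mu_0\|_{S^\infty}=2$, so $\|\func{P}\mu\|_{S^\infty} \le 2\|\mu\|_{S^\infty}$.

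\textbf{Step 2: the relations in (c).} The push-forward preserves total mass, $(\func{F}^\ast\mu)(\Sigma)=\mu(\Sigma)$, and $\func{F}^\ast\mu_0=\mu_0$. Hence $\func{P}\func{F}^\ast=\func{F}^\ast\func{P}=\func{P}$. It follows that $\func{P}\func{N}=\func{P}-\func{P}^2=0$ and $\func{N}\func{P}=\func{P}-\func{P}=0$. By induction, $\func{N}^n=\func{F}^{\ast n}-\func{P}=\func{F}^{\ast n}(\mathrm{Id}-\func{P})$. For $\mu\in S^\infty$, the measure $\nu:=\mu-\mu(\Sigma)\mu_0$ has zero total mass, so $\nu\in\mathcal V$ by Remark \ref{rem123}, and $\|\nu\|_{S^\infty}\le 3\|\mu\|_{S^\infty}$.

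\textbf{Step 3: exponential decay of $\func{N}^n$ in the $S^\infty$ norm.} We need
\[
\|\func{F}^{\ast n}\nu\|_{S^\infty}=\|P_\sigma^n\phi_1^\nu\|_\theta+\|\func{F}^{\ast n}\nu\|_\infty .
\]
The second term is at most $D_2\beta_1^n\|\nu\|_{S^\infty}$ by Proposition \ref{5.8}. For the first term, Lemma \ref{transformula}, extended to signed measures through the Jordan decomposition, shows that the marginal density of $\func{F}^{\ast n}\nu$ is $P_\sigma^n\phi_1^\nu$. Moreover, $\phi_1^\nu\in\ker\Pi_\sigma$, since $\Pi_\sigma$ is the projection onto constants given by integration against $m$ and $\int\phi_1^\nu\,dm=\nu(\Sigma)=0$. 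The base spectral gap then gives $\|P_\sigma^n\phi_1^\nu\|_\theta\le Dr^n\|\phi_1^\nu\|_\theta\le Dr^n\|\nu\|_{S^\infty}$. Combining both terms yields (b) with $\xi=\max\{r,\beta_1\}$ and $M=3(D+D_2)$.

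\textbf{Main obstacle.} A preliminary point is that $\func{F}^\ast$ must map $S^\infty$ into itself. This follows from Proposition \ref{l1}, which controls the $\|\cdot\|_\infty$ part, together with the fact that $P_\sigma$ preserves $\mathcal F_\theta$, which controls the marginal. The real difficulty is that Proposition \ref{5.8} bounds only the weak norm $\|\cdot\|_\infty$. Upgrading to the full $S^\infty$ norm therefore relies on the splitting in Step 3: the marginal is handled by the base spectral gap, and the fiber part by Proposition \ref{5.8}. This works only because the $S^\infty$ norm involves the Lipschitz norm of the marginal alone, not the Lipschitz regularity of the whole disintegration.
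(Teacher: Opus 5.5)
Your argument is correct. The paper gives no proof of this theorem (it defers to \cite{DR}), and your construction is the standard one used in that line of work: $\func{P}\mu=\mu(\Sigma)\mu_0$ and $\func{N}=\func{F}^{\ast}-\func{P}$, with the relations in (c) coming from mass preservation and $\func{F}^{\ast}\mu_0=\mu_0$, and with $\func{N}^n=\func{F}^{\ast n}(\mathrm{Id}-\func{P})$ where $(\mathrm{Id}-\func{P})S^\infty\subset\mathcal{V}$; your Step 3 is also sound, since $\|\cdot\|_{S^\infty}$ only involves the Lipschitz norm of the marginal plus $\|\cdot\|_\infty$, so the base spectral gap on $\ker\Pi_\sigma$ and Proposition \ref{5.8} give exponential decay of $\func{F}^{\ast n}$ on $\mathcal{V}$ directly, without a separate Lasota--Yorke inequality for $\|\cdot\|_{S^\infty}$.
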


The following results provide the essential Lasota-Yorke type inequalities for the transfer operator, establishing how the Lipschitz regularity is controlled under iteration. We alert the reader that the technical proofs for these estimates are omitted here, as they are developed in detail in \cite{DR}.

\begin{proposition}\label{kjsdkjduidf}
Suppose $F$ satisfies hypotheses (G1) and (G2). For any representation $\Gamma_\mu$ of a positive measure $\mu \in \mathcal{L}_\theta$, the following inequality holds:
\begin{equation}
|\Gamma^{\omega}_{\func{F}^*\mu}|_\theta^\omega \leq \theta |\mu|_\theta^\omega + C_1 \|\mu\|_\infty,
\end{equation}
where the constant is given by $C_1 = \max \{ H\theta + \theta N |g|_\theta, 2 \}$.
\end{proposition}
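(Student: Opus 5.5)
We prove the Lasota--Yorke inequality directly from the fiberwise formula of Proposition \ref{niceformulaab}. In that formula we write the weight as $g := 1/J_{m,\sigma_i}$, viewed as a Lipschitz function on the cylinders, which is how $|g|_\theta$ enters $C_1$. Fix a disintegration $\omega$ of $\mu$ and two points $\underline{x}^1,\underline{x}^2$ in the domain of the induced representative of $F^*\mu$. We split into two cases according to whether $x^1_0 = x^2_0$.

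\textbf{Case $x^1_0 \neq x^2_0$.} Here $d_\theta(\underline{x}^1,\underline{x}^2)\ge 1$. Each restriction satisfies $\|(F^*\mu)|_{\gamma_j}\|_W \le \|F^*\mu\|_\infty \le \|\mu\|_\infty$ by Proposition \ref{l1}. The triangle inequality then bounds the quotient by $2\|\mu\|_\infty$, which is where the entry $2$ in $C_1$ comes from.

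\textbf{Case $x^1_0 = x^2_0$.} The same admissible indices $i$ appear in both sums, and $d_\theta(i\underline{x}^1, i\underline{x}^2) = \theta\, d_\theta(\underline{x}^1,\underline{x}^2)$. For each admissible $i$ we decompose the difference as
\begin{equation*}
g(i\underline{x}^1)F^*_{\gamma^1_i}\mu|_{\gamma^1_i} - g(i\underline{x}^2)F^*_{\gamma^2_i}\mu|_{\gamma^2_i} = \mathrm{I} + \mathrm{II} + \mathrm{III},
\end{equation*}
with the three terms defined as follows.
\begin{itemize}
\item[$\mathrm{I}$] $= g(i\underline{x}^1)F^*_{\gamma^1_i}\bigl(\mu|_{\gamma^1_i} - \mu|_{\gamma^2_i}\bigr)$. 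By \eqref{weak1}, $\|\mathrm{I}\|_W \le g(i\underline{x}^1)\,|\mu|^\omega_\theta\,\theta\, d_\theta(\underline{x}^1,\underline{x}^2)$.
\item[$\mathrm{II}$] $= g(i\underline{x}^1)\bigl(F^*_{\gamma^1_i} - F^*_{\gamma^2_i}\bigr)\mu|_{\gamma^2_i}$. For a test function $h$ with $L(h)\le 1$ and $|h|_\infty\le 1$, we have $\bigl|\int h\circ G(i\underline{x}^1,\cdot) - h\circ G(i\underline{x}^2,\cdot)\,d\nu\bigr| \le H\theta\, d_\theta(\underline{x}^1,\underline{x}^2)\,|\nu|(K)$ by (G2). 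For a positive $\nu$ the total mass is $|\nu|(K) = \|\nu\|_W$ (Lemma \ref{niceformulaac}), so this is at most $H\theta\|\mu\|_\infty\, d_\theta$. This is exactly why positivity of $\mu$ is assumed.
\item[$\mathrm{III}$] $= \bigl(g(i\underline{x}^1)-g(i\underline{x}^2)\bigr)F^*_{\gamma^2_i}\mu|_{\gamma^2_i}$. This is bounded by $|g|_\theta\,\theta\, d_\theta\,\|\mu\|_\infty$.
\end{itemize}

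Summing over the at most $N$ admissible indices, term $\mathrm{III}$ contributes $\theta N|g|_\theta\|\mu\|_\infty$. Term $\mathrm{I}$ contributes $\theta|\mu|^\omega_\theta\sum_i g(i\underline{x}^1)$, and $\sum_i g(i\underline{x}) = P_\sigma 1(\underline{x}) = 1$ since $m$ is $\sigma$-invariant. Term $\mathrm{II}$ also carries the weights $g(i\underline{x}^1)$, which sum to $1$, so it contributes only $H\theta\|\mu\|_\infty$, not $NH\theta\|\mu\|_\infty$.

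Dividing by $d_\theta$ and taking the essential supremum gives $\theta|\mu|^\omega_\theta + (H\theta + \theta N|g|_\theta)\|\mu\|_\infty$ in this case. Combining with the first case yields the stated constant $C_1 = \max\{H\theta + \theta N|g|_\theta,\, 2\}$.

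The main obstacle is the bookkeeping in term $\mathrm{II}$. It requires control of the total mass of fiber measures by their $W$-norm, which holds only for positive measures. One also has to choose the ordering of the decomposition so that the weights attached to the contracting term sum to one.
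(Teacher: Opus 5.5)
Your proof is correct and follows essentially the same route as the argument the paper defers to \cite{DR}. The entry $2$ in $C_1$ comes from the case $x^1_0\neq x^2_0$, using $d_\theta\ge 1$ and Proposition \ref{l1}. For $x^1_0=x^2_0$, your three-term splitting (contracting term, (G2) term using positivity so that total mass equals $\|\cdot\|_W$, and weight-variation term) reproduces exactly $\theta|\mu|^\omega_\theta+(H\theta+\theta N|g|_\theta)\|\mu\|_\infty$. The only minor point is that in term $\mathrm{I}$ you apply \eqref{weak1} to the signed measure $\mu|_{\gamma^1_i}-\mu|_{\gamma^2_i}$ rather than to a restriction $\mu|_\gamma$. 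This is justified because, under (G1), $h\circ G(i\underline{x}^1,\cdot)$ is still an admissible test function.
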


\begin{theorem}\label{hdjfhsdfjsd}
Under the assumptions (G1) and (G2), for every representation $\Gamma_\mu$ of a positive measure $\mu \in \mathcal{L}_\theta$ and for all $n \geq 1$, we have:
\begin{equation}\label{uerjerh}
|\Gamma^{\omega}_{\func{F{^*}}^n\mu}|_\theta^\omega \leq \theta^n |\mu|_\theta^\omega + \frac{C_1}{1 - \theta} \|\mu\|_\infty,
\end{equation}
where $C_1$ is the constant defined in Proposition \ref{kjsdkjduidf}.
\end{theorem}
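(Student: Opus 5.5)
The plan is induction on $n$, using Proposition \ref{kjsdkjduidf} as the one-step inequality together with the weak contraction of Proposition \ref{l1}. Fix a representation $\Gamma^\omega_\mu$ of the positive measure $\mu\in\mathcal{L}_\theta$. At each step we use the representation of $\func{F^*}^{k}\mu$ obtained by pushing $\omega$ forward through the explicit formula \eqref{niceformulaa} of Proposition \ref{niceformulaab}. In this way the chain of representations is canonical and the superscript $\omega$ has a consistent meaning along the iteration. The measures $\func{F^*}^k\mu$ stay positive, since push-forwards of positive measures are positive. They also stay in $\mathcal{L}_\theta$ by the one-step inequality itself, so Proposition \ref{kjsdkjduidf} may be applied at every stage.

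Write $a_k := |\Gamma^{\omega}_{\func{F^*}^k\mu}|^\omega_\theta$. Applying Proposition \ref{kjsdkjduidf} to $\func{F^*}^{k}\mu$ gives
\[
a_{k+1} \le \theta a_k + C_1 \|\func{F^*}^k \mu\|_\infty .
\]
By Proposition \ref{l1}, iterated $k$ times, $\|\func{F^*}^k\mu\|_\infty \le \|\mu\|_\infty$. Hence $a_{k+1}\le \theta a_k + C_1\|\mu\|_\infty$ for all $k\ge 0$.

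Unrolling this recursion from $a_0=|\mu|^\omega_\theta$ gives
\[
a_n \le \theta^n |\mu|_\theta^\omega + C_1\|\mu\|_\infty \sum_{j=0}^{n-1}\theta^j \le \theta^n|\mu|^\omega_\theta + \frac{C_1}{1-\theta}\|\mu\|_\infty ,
\]
which is \eqref{uerjerh}. A clean induction hypothesis is $a_k \le \theta^k a_0 + C_1\|\mu\|_\infty\sum_{j<k}\theta^j$; the inductive step is the one-line computation above.

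The main obstacle is bookkeeping rather than analysis. The ess-sup defining $a_{k+1}$ must be taken over the domain $I_{\Gamma^\omega}$ of the pushed-forward representation, and that domain has full $m$-measure. This follows because $\sigma_i^{-1}$ preserves $m$-null sets, as the Jacobians $J_{m,\sigma_i}$ are positive for the Markov measure. We must also check that Proposition \ref{kjsdkjduidf} is stated for exactly this induced representation, so that the estimate composes. If needed, one can instead take the infimum over representations only at the end, which is harmless since the right-hand side depends on $\omega$ only through $|\mu|^\omega_\theta$.
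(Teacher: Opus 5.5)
Your proof is correct and follows the standard route. The paper omits this proof and defers to \cite{DR}, where the estimate comes from iterating the one-step inequality of Proposition \ref{kjsdkjduidf} along the induced representations, bounding $\|F^{*k}\mu\|_\infty \le \|\mu\|_\infty$ by Proposition \ref{l1}, and using $\sum_{j<n}\theta^j \le 1/(1-\theta)$.

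One small bookkeeping fix: for the pushed-forward representation to be defined $m$-a.e., you need $\sigma_i$ to send $m$-null sets to $m$-null sets. That is the existence of the Jacobian, $m(\sigma_i A)=\int_A J_{m,\sigma_i}\,dm$, not its positivity.
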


\begin{remark}\label{kjedhkfjhksjdf}
By taking the infimum over all possible disintegrations $\omega$ on both sides of inequality \eqref{uerjerh}, it follows that for each positive measure $\mu \in \mathcal{L}_\theta$:
\begin{equation}\label{kuyhj}
|\func{F{^*}^n}\mu|_\theta \leq \theta^n |\mu|_\theta + \frac{C_1}{1 - \theta} \|\mu\|_\infty,
\end{equation}
for all $n \geq 1$.
\end{remark}

The third theorem provides an estimate for the Lipschitz constant (see equation (\ref{Lips2}) in Definition \ref{Lips3}) associated with the disintegration of the unique $F$-invariant measure $\mu_0 \in S^{\infty}$. This type of result has numerous applications, and similar estimates for other systems can be found in \cite{RRR}, \cite{BM} and \cite{GLu}. For instance, in \cite{GLu}, the regularity of the disintegration is used to demonstrate the stability of the $F$-invariant measure under a particular type of \textit{ad-hoc} perturbation.

In the following result, we denote by $\mathcal{L}_\theta ^{+}$ the space of positive measures on $\Sigma_A^+ \times K$ for which the Lipschitz constant of their disintegration along $\mathcal{F}^s$, denoted by $|\mu|_\theta$, is finite (see Definitions \ref{Lips3} and \ref{erfcscvdsd}).

\begin{theorem}\label{regg}
Suppose that $F$ satisfies (G1) and (G2). Let $\mu_0$ be the unique $F$-invariant probability measure in $S^{\infty}$. Then $\mu _{0}\in \mathcal{L}_\theta ^{+}$ and 
\begin{equation*}
|\mu _{0}|_\theta \leq \dfrac{C_1}{1-\theta},
\end{equation*}where $C_1>0$ is a constant. 
\end{theorem}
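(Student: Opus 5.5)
We plan to obtain $\mu_0$ as a limit of iterates of a regular initial measure, and to pass the uniform Lasota--Yorke bound of Theorem \ref{hdjfhsdfjsd} to the limit.

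\textbf{Choice of starting measure.} Take $\nu := m\times \lambda$, where $\lambda$ is any fixed probability measure on $K$. Its marginal density is $\phi_1\equiv 1$, and for every $\underline{x}$ we have $\nu|_{\gamma_{\underline{x}}}=\lambda$. Hence $|\nu|_\theta=0$, and by \eqref{simples} we get $\|\nu\|_\infty=1$. In particular $\nu\in S^\infty\cap\mathcal{L}_\theta^+$.

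\textbf{Uniform regularity of the iterates.} Set $\nu_n:=F^{*n}\nu$. By Remark \ref{kjedhkfjhksjdf} and Proposition \ref{l1},
$$|\nu_n|_\theta\le \theta^n\cdot 0+\frac{C_1}{1-\theta}\|\nu\|_\infty=\frac{C_1}{1-\theta}\qquad\text{for all } n.$$
So for each $n$ there is a representation $\Gamma^{\omega_n}_{\nu_n}$ with constant at most $\frac{C_1}{1-\theta}+\frac1n$.

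\textbf{Convergence to $\mu_0$.} The measure $\nu-\mu_0$ has zero mass, so by Remark \ref{rem123} it lies in $\mathcal{V}$. Proposition \ref{5.8} then gives
$$\|\nu_n-\mu_0\|_\infty=\|F^{*n}(\nu-\mu_0)\|_\infty\le D_2\beta_1^n\|\nu-\mu_0\|_{S^\infty}\to 0.$$
It follows that, for $m$-a.e. $\underline{x}$ (after discarding a countable union of null sets), $\|\nu_n|_{\gamma_{\underline{x}}}-\mu_0|_{\gamma_{\underline{x}}}\|_W\to0$, where the restrictions of $\nu_n$ are taken in the chosen representatives $\omega_n$.

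\textbf{Passing to the limit.} Fix a full-measure set $I$ on which all representatives are defined, all the Lipschitz estimates hold, and the convergence above holds. For $\underline{x}^1,\underline{x}^2\in I$, the triangle inequality gives
$$\|\mu_0|_{\gamma_1}-\mu_0|_{\gamma_2}\|_W\le \|\nu_n|_{\gamma_1}-\nu_n|_{\gamma_2}\|_W+2\|\nu_n-\mu_0\|_\infty\le \Big(\tfrac{C_1}{1-\theta}+\tfrac1n\Big)d_\theta(\underline{x}^1,\underline{x}^2)+2D_2\beta_1^n\|\nu-\mu_0\|_{S^\infty}.$$
Letting $n\to\infty$ yields $|\mu_0|_\theta^\omega\le \frac{C_1}{1-\theta}$ for the representation $\omega$ of $\mu_0$ used on $I$. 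Since $\mu_0$ is positive, this gives $\mu_0\in\mathcal{L}^+_\theta$ with the stated bound.

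\textbf{Main obstacle.} The delicate step is handling the essential supremum and the non-uniqueness of representatives. The ess sup in Definition \ref{Lips3} ranges over pairs, so we must check that pointwise control on a full-measure set $I$ controls the ess sup over $I\times I$. This holds because $I\times I$ has full $m\times m$ measure. We must also make sure that the $\|\cdot\|_\infty$-convergence, which is itself an ess sup statement, is taken with respect to the same representatives $\omega_n$. Since representatives differ only on null sets, we intersect countably many full-measure sets and proceed as above.
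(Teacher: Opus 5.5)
Your argument is correct and is essentially the one the paper relies on. The paper omits the proof here and cites \cite{DR}, but it runs exactly this scheme in Remark \ref{riirorpdf} and Lemma \ref{thshgf}, the $\delta$-uniform version of this theorem: start from a product measure $m\times\lambda$ with constant path, bound $|F^{*n}(m\times\lambda)|_\theta\le C_1/(1-\theta)$ via Theorem \ref{hdjfhsdfjsd}, use exponential convergence in $\|\cdot\|_\infty$ to get a.e.\ convergence of the fiber restrictions, and pass the Lipschitz bound to the limit. One small tidy-up: the Lipschitz bounds for $\nu_n$ hold for $m\times m$-a.e.\ pair rather than for all pairs of a fixed full-measure set $I$, so intersect these countably many full-measure sets of pairs with $I\times I$, after which your triangle-inequality estimate goes through unchanged.
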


\section{Statistical Stability}\label{loeritu}

In this section, we investigate the response of the invariant measure $\mu_0$ to small variations in the dynamics. We begin by defining the class of perturbations under consideration.
\subsection{Admissible $R(\delta)$-Perturbations}\label{kjrthkje}

We define an \textbf{admissible $R(\delta)$-perturbation} as a family $\{F_{\delta }\}_{\delta \in [0,1)}$, where each map $F_{\delta } = (\sigma_\delta, G_\delta)$ satisfies conditions (G1) and (G2), as well as the following properties (U1), (U2), and (U3).

\begin{enumerate}
   \item[\textbf{(U1)}] There exists $\delta_1 > 0$ small enough such that for all $\delta \in (0, \delta_1)$, the base map is given by $(\sigma_\delta, m_\delta) = (\sigma, m_\delta)$, where $m_\delta$ is the Markov measure defined by the probability vector $p_\delta = (p_{1,\delta}, \dots, p_{N,\delta})$. In particular, the degree remains constant:
    \begin{equation*}
        N := \deg(\sigma) = \deg(\sigma_\delta), \quad \forall \delta \in [0, \delta_1).
    \end{equation*}
    Note that, in this case, for all $\delta \in (0, \delta_1)$, it holds $d_\theta(i\underline{x}_0, i\underline{x}_\delta) = 0$ for all $i=1, \dots, N$, where $i\underline{x}_\delta := \sigma_{\delta,i}^{-1}(\underline{x})$ denotes the $i$-th pre-image of $\underline{x}$ by $\sigma_\delta$. Moreover, it is important to clarify that $i\underline{x}_0 := i\underline{x} = i\underline{x}_\delta$ for all $\delta \in [0, \delta_1)$.

    \item[\textbf{(U2)}] There exists a real-valued function $\delta \mapsto R(\delta) \in \mathbb{R}^+$ such that $$\lim_{\delta \to 0^+} R(\delta) \log \delta = 0,$$ satisfying the following two conditions:
    \begin{enumerate}
        \item[\textbf{(U2.1)}] Let $J_{\sigma_\delta}$ denote the Jacobian of $(\sigma, m_\delta)$ and define the potential $g_\delta(\underline{x}) := 1 / J_{\sigma_\delta}(\underline{x})$. We require that:
        \begin{equation}\label{iueyrtfd}
            \sum_{i=1}^{N} \left| g_\delta(i\underline{x}_\delta) - g_0(i\underline{x}_0) \right| \leq R(\delta), \quad \forall \underline{x} \in \Sigma_A^+,
        \end{equation}where $i\underline{x}_\delta := \sigma_{\delta,i}^{-1}(\underline{x})$ denotes the $i$-th pre-image of $\underline{x}$ by $\sigma_\delta$;
        
        \item[\textbf{(U2.2)}] The fiber maps $G_0$ and $G_\delta$ are $R(\delta)$-close in the supremum norm:
        \begin{equation*}
            d_2(G_0(\underline{x}, y), G_\delta(\underline{x}, y)) \leq R(\delta), \quad \forall (\underline{x}, y) \in \Sigma_A^+ \times K.
        \end{equation*}
    \end{enumerate}

    \item[\textbf{(U3)}] The $\sigma_\delta$-invariant probability measure $m_\delta$ is equivalent to $m$ for all $\delta \in [0, \delta_1)$ ($m \ll m_\delta$ and $m_\delta \ll m$). Moreover, the Radon-Nikodym derivative is uniformly bounded:
    \begin{equation}\label{nmbcvd}
        \func{J} := \sup_{\delta \in [0, \delta_1)} \left\| \frac{dm_\delta}{dm} \right\|_\infty < \infty,
    \end{equation}
    where the $L^\infty$ norm is calculated with respect to $m$.
\end{enumerate}

\begin{remark}\label{toyiout}
	By (U3), since $m \ll m_{\delta}$ for all $\delta$ and $m_{\delta}$ is $\sigma_\delta$-invariant, it follows that $\sum_{i=1}^{\deg(\sigma)}  g_\delta(i\underline{x}_\delta)=1$ $m$-almost everywhere. 
\end{remark}
\begin{enumerate}
	\item [(A1)] (Uniform Lasota-Yorke inequality) There exist constants $B_3>0$ and $0<\beta _3 <1$ such that for all $u \in \mathcal{F}_{\theta}(\Sigma^+_A)$,  all $\delta \in [0,1)$,  and all $n \geq 1$, the following inequality holds:
		
	\begin{equation*}
			||\operatorname{P}_{\sigma_\delta}^n(u)||_{\theta} \leq B_3 \beta _3 ^n || u||_{\theta} + B_3|u|_{\infty},
	\end{equation*}where $||u||_\theta := |u|_\theta + |u|_{\infty}$ and $\operatorname{P}_{\sigma_\delta}$ is the Ruelle-Perron-Frobenius operator of $\sigma_\delta$.
\end{enumerate}

\begin{enumerate}
	\item [(A2)] For all $\delta \in [0,1), $ let $\alpha _\delta$ and $H_\delta$ be the contraction rate $\alpha$ given by Equation (\ref{contracting1}) for $G_\delta$ (see G1) and the constant $H$ defined by Equation (\ref{sup}) (see G2), respectively. Set $$C_{1, \delta} = \max \{ H_\delta \theta +\theta \deg(\sigma) |g_\delta|_\theta , 2\}.$$ Suppose that, $$\sup _ \delta C_{1, \delta} < \infty.$$ 
\end{enumerate}

\begin{remark}
	Let $\{F_{\delta }\}_{\delta \in [0,\delta_1)}$ an admissible $R(\delta)$-perturbation, $i\underline{x}_\delta:=\sigma_{\delta,i}^{-1}(\underline{x})$ and $\gamma _{\delta, i} := \gamma _{\sigma _{\delta, i} ^{-1}(\underline{x})}$ for all $\underline{x} \in \Sigma _A ^+$. Then, for all positive measure $\mu \in \mathcal{L}_\theta$, the following inequality holds: 
    \begin{equation}\label{nslfdflsdjlf}
        \left\vert \left\vert ({\func{F}_{0,\gamma _{0,i} }{_\ast }}- \func{F}_{0,\gamma _{\delta,i} }{_\ast })\mu |_{\gamma _{0,i}}\right\vert \right\vert _{W} =0,
    \end{equation}where $F_{\delta,\gamma _{\delta,i}}$ is defined by Equation (\ref{poier}), for all $\delta \in [0,1)$.
\end{remark}
\begin{lemma}\label{çhjghljk}
	Let $\{F_{\delta }\}_{\delta \in [0,\delta_1)}$ an admissible $R(\delta)$-perturbation, $i\underline{x}_\delta:=\sigma_{\delta,i}^{-1}(\underline{x})$ and $\gamma _{\delta, i} := \gamma _{\sigma _{\delta, i} ^{-1}(\underline{x})}$ for all $\underline{x} \in \Sigma _A ^+$. Then, the following inequality holds: $$\left\vert \left\vert ({\func{F}_{0,\gamma _{\delta,i} }{_\ast }}- \func{F}_{\delta,\gamma _{\delta,i} }{_\ast })\mu |_{\gamma _{0,i}}\right\vert \right\vert _{W} \leq ||\mu|_{\gamma_{0,i}}||_W R(\delta), \forall i=1, \cdots, \deg(\sigma),$$ where $F_{\delta,\gamma _{\delta,i}}$ is defined by Equation (\ref{poier}), for all $\delta \in [0,1)$.
\end{lemma}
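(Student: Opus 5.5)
We bound the Wasserstein norm of the difference of two push-forwards by testing against functions $g$ with $L_K(g)\le 1$ and $\|g\|_\infty\le 1$, and using the sup-closeness (U2.2) of the fiber maps. The fiber $\gamma_{\delta,i}=\gamma_{0,i}$ is the same leaf $\gamma_{i\underline{x}}$ by (U1), since $i\underline{x}_\delta=i\underline{x}_0$. Hence $F_{0,\gamma_{\delta,i}}(y)=G_0(i\underline{x},y)$ and $F_{\delta,\gamma_{\delta,i}}(y)=G_\delta(i\underline{x},y)$ for all $y\in K$, by the definition \eqref{poier}.

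Set $\nu:=\mu|_{\gamma_{0,i}}$, which is a signed measure on $K$ with Jordan decomposition $\nu=\nu^+-\nu^-$. For any admissible test function $g$ we get
\begin{equation*}
\left|\int g\,d(F_{0,\gamma_{\delta,i}*}\nu)-\int g\,d(F_{\delta,\gamma_{\delta,i}*}\nu)\right|
=\left|\int_K \bigl(g\circ G_0(i\underline{x},\cdot)-g\circ G_\delta(i\underline{x},\cdot)\bigr)\,d\nu\right| .
\end{equation*}
The integrand $h:=g\circ G_0(i\underline{x},\cdot)-g\circ G_\delta(i\underline{x},\cdot)$ satisfies $|h|\le L_K(g)\, d(G_0,G_\delta)\le R(\delta)$ by (U2.2).

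Since $\mu$ is a positive measure, $\nu$ is positive. For a positive $\nu$, $\left|\int h\,d\nu\right|\le R(\delta)\,\nu(K)$. Testing $\|\nu\|_W$ against $g\equiv 1$ shows $\|\nu\|_W\ge \nu(K)$, and the reverse inequality holds because $\|g\|_\infty\le 1$. So $\nu(K)=\|\nu\|_W$, and taking the supremum over $g$ yields the claim.

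The main obstacle appears if one wants the estimate for general signed $\nu$, where $\nu(K)$ is replaced by the total variation $|\nu|(K)$, which is not controlled by $\|\nu\|_W$. This is why the argument should be carried out only for positive $\mu$, matching how the lemma is applied to positive measures in $\mathcal{L}_\theta$, as in the preceding remark. If the lemma is needed for a signed $\mu$, the fallback is to apply the bound to $\mu^\pm$ separately and accept $\|\mu^+|_\gamma\|_W+\|\mu^-|_\gamma\|_W$ on the right-hand side.

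A secondary point is to check that the bound $|h|\le R(\delta)$ holds uniformly in $y$. This is immediate, because (U2.2) holds for all $(\underline{x},y)$, in particular at the base point $i\underline{x}$.
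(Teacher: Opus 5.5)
Your proof is correct and is essentially the paper's argument. You test against $u$ with $L_K(u)\le 1$ and $\|u\|_\infty\le 1$, and rewrite the difference as $\int_K \bigl(u\circ G_0(i\underline{x},\cdot)-u\circ G_\delta(i\underline{x},\cdot)\bigr)\,d\mu|_{\gamma_{0,i}}$ on the common leaf. You then bound the integrand by $R(\delta)$ via (U2.2), and bound $R(\delta)\int 1\,d\mu|_{\gamma_{0,i}}$ by $R(\delta)\|\mu|_{\gamma_{0,i}}\|_W$.

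Restricting explicitly to positive $\mu$ is the right call. The paper's proof uses positivity of $\mu|_{\gamma_{0,i}}$ implicitly in those same last steps, and the lemma is only applied to the probability measures $\mu_\delta$. For genuinely signed $\mu$ the stated bound can fail, for instance when the fiber restriction is a dipole $\delta_a-\delta_b$ with $a,b$ close, and $G_\delta$ is a small translate of $G_0$.
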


\begin{proof}
	To simplify the notation, we denote $\gamma:=\gamma_{\delta,i}$. Thus, by Definition (\ref{wasserstein}) and (U2.2), we have

	\begin{eqnarray*}
		\left\vert \left\vert ({\func{F}_{0,\gamma }{_\ast }}- \func{F}_{\delta,\gamma }{_\ast })\mu |_{\gamma _{0,i}}\right\vert \right\vert _{W} &=& \left\vert \left\vert ({\func{F}_{0, \gamma }{_\ast }}- \func{F}_{\delta, \gamma}{_\ast })\mu |_{\gamma _{0,i}}\right\vert \right\vert _{W} \\&=& \sup _{L_K(u)\leq 1,|u|_{\infty }\leq 1} \left\vert \int {u}  d ({\func{F}_{0, \gamma }{_\ast }}\mu |_{\gamma _{0,i}} - \func{F}_{\delta, \gamma}{_\ast }\mu |_{\gamma _{0,i}}) \right\vert  \\&=& \sup _{L_K(u)\leq 1,|u|_{\infty }\leq 1} \left\vert \int {u (G_0(\gamma,y))  - u (G_\delta(\gamma,y))}  d \mu |_{\gamma _{0,i}} \right\vert \\&\leq & \sup _{L_K(u)\leq 1,|u|_{\infty }\leq 1}  \int { \left\vert u (G_0(\gamma,y))  - u (G_\delta(\gamma,y) ) \right\vert}  d \mu |_{\gamma _{0,i}}  \\&\leq&   \int {d(G_0(\gamma,y),G_\delta(\gamma,y))}  d \mu |_{\gamma _{0,i}}  \\&\leq& R(\delta) \left\vert \int {1}  d \mu |_{\gamma _{0,i}} \right\vert \\&\leq& R(\delta)||\mu |_{\gamma _{0,i}}||_W.
	\end{eqnarray*}
	
\end{proof}

In what follows, for a given admissible $R(\delta)$-perturbation $\{F_\delta \}_{\delta \in [0,\delta_1)}$, we denote by $\func{F_\delta}_{\ast}$ the transfer operator of $F_\delta$. Then, for a given $\delta \in [0,\delta_1)$ we have 

\begin{equation}\label{good}
	(\func{F_\delta}_{\ast}\mu)|_\gamma:=\sum _{i=1}^{\deg(\sigma)}{\func {F}_{\delta, \gamma_i*}\mu|_{\gamma_i}g_\delta(i\underline{x})}\chi_{P_i}(\underline{x}),
\end{equation}where $i\underline{x}_\delta:=\sigma_{\delta,i}^{-1}(\underline{x})$ and $\gamma _{\delta, i} := \gamma _{\sigma _{\delta, i} ^{-1}(\underline{x})}$ for $m$-a.e. $\underline{x} \in \Sigma _A^+$ and all $\mu \in \mathcal{AB}_m$, where $g_\delta(\underline{x})$ is defined in Equation (\ref{iueyrtfd}).

 The next result shows that a statement similar to Proposition \ref{l1} still holds uniformly for $\func{F_\delta}_{\ast}$ for all $\delta$. 

\begin{proposition}\label{agorasim}
	The operator $\func{F_\delta}_{\ast}: \mathcal{L}^{\infty} \longrightarrow \mathcal{L}^{\infty}$ is a weak contraction for all $\delta \in [0,\delta_1)$. It holds, $||\func{F_\delta}_{\ast} \mu ||_\infty \leq ||\mu||_\infty$, for all $\mu \in \mathcal{L}^{\infty}$ and all $\delta \in [0,\delta_1)$.
\end{proposition}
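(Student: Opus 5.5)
The plan is to repeat the argument behind Proposition \ref{l1} for each fixed $\delta$, using the explicit fiberwise formula (\ref{good}) for $\func{F_\delta}_{\ast}$ and Remark \ref{toyiout} to normalize the weights. Fix $\delta\in[0,\delta_1)$ and $\mu\in\mathcal{L}^\infty$. By Remark \ref{ghtyhh}, the restriction $(\func{F_\delta}_{\ast}\mu)|_\gamma$ may be computed through the Jordan decomposition $\func{F_\delta}_{\ast}\mu = \func{F_\delta}_{\ast}\mu^+ - \func{F_\delta}_{\ast}\mu^-$. Each positive part then obeys the transformation rule of Lemma \ref{transformula}, applied to $F_\delta$ and the Markov measure $m_\delta$; this rule is what yields (\ref{good}). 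For $m$-a.e. $\underline{x}$ we therefore get
\begin{equation*}
(\func{F_\delta}_{\ast}\mu)|_{\gamma}=\sum_{i=1}^{N} g_\delta(i\underline{x}_\delta)\,\func{F}_{\delta,\gamma_{\delta,i}*}\big(\mu|_{\gamma_{\delta,i}}\big)\chi_{\sigma_i(P_i)}(\underline{x}).
\end{equation*}
Because of (U3), the sets of full $m_\delta$-measure and of full $m$-measure coincide. Hence the a.e. statements transfer between the two measures, and the $\operatorname{ess\,sup}$ in $\|\cdot\|_\infty$ is the same whether we use $m$ or $m_\delta$.

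Next, apply the triangle inequality for $\|\cdot\|_W$ together with the positivity of $g_\delta$:
\begin{equation*}
\|(\func{F_\delta}_{\ast}\mu)|_\gamma\|_W\le\sum_{i} g_\delta(i\underline{x}_\delta)\,\|\func{F}_{\delta,\gamma_{\delta,i}*}(\mu|_{\gamma_{\delta,i}})\|_W .
\end{equation*}
Inequality (\ref{weak1}) of Lemma \ref{niceformulaac} holds for any measurable fiber map, since push-forward by a $1$-Lipschitz contraction preserves the test class $\{u: L_K(u)\le1,\ |u|_\infty\le1\}$. Indeed, $u\circ F_{\delta,\gamma}$ satisfies $L_K(u\circ F_{\delta,\gamma})\le\alpha_\delta\le1$ and $|u\circ F_{\delta,\gamma}|_\infty\le 1$ by (G1) for $G_\delta$. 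Each term is therefore bounded by $g_\delta(i\underline{x}_\delta)\|\mu|_{\gamma_{\delta,i}}\|_W \le g_\delta(i\underline{x}_\delta)\|\mu\|_\infty$.

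Finally, Remark \ref{toyiout} gives $\sum_i g_\delta(i\underline{x}_\delta)=1$ for $m$-a.e. $\underline{x}$. Summing the previous bounds yields $\|(\func{F_\delta}_{\ast}\mu)|_\gamma\|_W\le\|\mu\|_\infty$ a.e., and taking the essential supremum gives the claim. This also shows that $\func{F_\delta}_{\ast}$ maps $\mathcal{L}^\infty$ into itself.

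The main obstacle is the null-set bookkeeping. The estimate $\|\mu|_{\gamma_{\delta,i}}\|_W\le\|\mu\|_\infty$ holds only off an $m$-null set $E$, so we must check that the set of $\underline{x}$ with some preimage $i\underline{x}_\delta\in E$ is also null. This follows because $\sigma$ is nonsingular with respect to $m$ (a Markov measure with positive Jacobian): $m(\sigma(E))\le \sum_i m(\sigma_i(E\cap P_i))=0$. We also need that $\func{F_\delta}_{\ast}\mu\in\mathcal{AB}_m$, which follows from the same nonsingularity together with the equivalence of $m$ and $m_\delta$ in (U3).
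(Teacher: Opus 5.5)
Your proposal is correct and follows the same route as the paper. You expand $((F_\delta)_{\ast}\mu)|_\gamma$ via \eqref{good}, apply the triangle inequality and the fiberwise weak contraction \eqref{weak1}, bound each $\|\mu|_{\gamma_{\delta,i}}\|_W$ by $\|\mu\|_\infty$, and use $\sum_i g_\delta(i\underline{x}_\delta)=1$ from Remark~\ref{toyiout} before taking the essential supremum. Your null-set bookkeeping and your explicit check that \eqref{weak1} holds for $F_{\delta,\gamma}$ because $G_\delta$ satisfies (G1) are refinements the paper leaves implicit. One wording fix: you say \eqref{weak1} holds for any measurable fiber map, but it genuinely needs the fiber maps to be $1$-Lipschitz, which is exactly what your own justification uses.
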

\begin{proof}
	
	The first thing to note is that $\operatorname{P}_{\sigma_\delta} (1) = 1$ $m$-almost everywhere and for all $\delta \in [0,\delta_1)$ (see Remark \ref{toyiout}).

	Now let us complete the proof by applying Lemma \ref{niceformulaac} and Proposition \ref{niceformulaab} in the sequence of inequalities below: for $m$-almost every $\gamma \in \mathcal{F}^s (\approxeq \Sigma _A^+)$, we have (bellow we denote $i\underline{x}_\delta:=\sigma_{\delta,i}^{-1}(\underline{x})$ and $\gamma _{\delta, i} := \gamma _{\sigma _{\delta, i} ^{-1}(\underline{x})}$)
	
	\begin{eqnarray*}
		||(\func{F_\delta}_{\ast} \mu)|_\gamma ||_W 
		&\leq& \sum _{i=1}^{\deg(\sigma)}{|| \func {F_{\delta,\gamma_{\delta,i}}}_{*}\mu|_{\gamma_{\delta, i}}g_\delta(i\underline{x}_\delta)||_W}
		\\ &\leq&  \sum _{i=1}^{\deg(\sigma)}{g_\delta(i\underline{x}_\delta)}||\mu|_{\gamma_{\delta,i}}||_W
		\\ &\leq& ||\mu||_\infty \sum _{i=1}^{\deg(\sigma)}{g_\delta(i\underline{x}_\delta)} 
		\\ &=& ||\mu||_\infty \operatorname{P}_{\sigma_\delta} (1)\\ &=& ||\mu||_\infty.
	\end{eqnarray*} Taking the essential supremum over $\gamma$, with respect to $m$, we have $$||\func{F_\delta}_{\ast} \mu ||_\infty \leq ||\mu||_\infty,$$which is the desired relation.
	
\end{proof}

\begin{lemma}\label{UF2ass}
	Suppose that $\{F_\delta \}_{\delta \in [0,1)}$ is an admissible $R(\delta)$-perturbation, and let $\func{F_\delta}_{\ast}$ and $%
	\mu_{\delta }$ denote the corresponding transfer operators and fixed points (i.e., the $F_\delta$ invariant probability measures in $\mathcal{S}^\infty$), respectively. If the family $\{\mu_{\delta }\}_{\delta \in [0,1)}$ satisfies 
	\begin{equation}\label{new1}
		|\mu_{\delta }|_\theta \leq B_u,
	\end{equation}for all $\delta \in [0,\delta_1)$,
	then there exists a constant $C_{2}$ such that
	\begin{equation}\label{oiuteiyoey}
	||(\func{F_0}_{\ast}-\func{F_\delta}_{\ast})\mu_{\delta }||_{{\infty}}\leq
	C_{1}R(\delta),
	\end{equation}
	for all $\delta \in [0,\delta_1)$, where $C_2:= 2 +B_u$.
\end{lemma}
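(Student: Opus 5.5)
The plan is to compare the two operators fiber by fiber through formula \eqref{good}, telescoping the difference into a "base weight" part and a "fiber map" part. Fix $\underline{x}$ in a full $m$-measure set where \eqref{good} holds for both $\delta$ and $0$, and write $\gamma=\gamma_{\underline{x}}$. By (U1) the preimages agree, $i\underline{x}_\delta=i\underline{x}_0=i\underline{x}$, so the leaves $\gamma_{\delta,i}=\gamma_{0,i}=:\gamma_i$ coincide. This leaves two discrepancies: the weights $g_\delta$ versus $g_0$, and the fiber maps $F_{\delta,\gamma_i}$ versus $F_{0,\gamma_i}$. I would write
\begin{equation*}
((\func{F_0}_{\ast}-\func{F_\delta}_{\ast})\mu_\delta)|_\gamma=\sum_{i}\big(g_0(i\underline{x})-g_\delta(i\underline{x})\big)\func{F}_{0,\gamma_i*}\mu_\delta|_{\gamma_i}+\sum_i g_\delta(i\underline{x})\big(\func{F}_{0,\gamma_i*}-\func{F}_{\delta,\gamma_i*}\big)\mu_\delta|_{\gamma_i}.
\end{equation*}

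For the first sum, apply the triangle inequality for $\|\cdot\|_W$ together with \eqref{weak1} of Lemma \ref{niceformulaac}. This bounds it by $\sum_i|g_0(i\underline{x})-g_\delta(i\underline{x})|\,\|\mu_\delta|_{\gamma_i}\|_W$. Since $\mu_\delta$ is a probability measure in $S^\infty$, the proof of Theorem \ref{probun} applied to $F_\delta$ gives $\|\mu_\delta\|_\infty=1$. Then (U2.1) bounds the first sum by $R(\delta)$.

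For the second sum, Lemma \ref{çhjghljk} gives each term $\le g_\delta(i\underline{x})\|\mu_\delta|_{\gamma_i}\|_W R(\delta)$. Remark \ref{toyiout} gives $\sum_i g_\delta(i\underline{x})=1$, so this sum is at most $\|\mu_\delta\|_\infty R(\delta)\le R(\delta)$. Together the two sums give $2R(\delta)$. Taking the essential supremum over $\underline{x}$ yields $\|(\func{F_0}_{\ast}-\func{F_\delta}_{\ast})\mu_\delta\|_\infty\le 2R(\delta)\le (2+B_u)R(\delta)=C_2R(\delta)$, which is \eqref{oiuteiyoey}. I read the constant $C_1$ printed in the statement as $C_2$, consistent with the closing sentence of the statement.

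The main obstacle is where $B_u$ would actually be needed. If the setting only gives $d_\theta(i\underline{x}_\delta,i\underline{x}_0)$ small rather than zero, the leaves differ. In that case one inserts the intermediate term $\func{F}_{0,\gamma_{0,i}*}\mu_\delta|_{\gamma_{0,i}}-\func{F}_{0,\gamma_{\delta,i}*}\mu_\delta|_{\gamma_{\delta,i}}$. It is controlled by \eqref{weak1} and the Lipschitz regularity \eqref{new1}, giving at most $B_u\, d_\theta(i\underline{x}_\delta,i\underline{x}_0)$ per term, while \eqref{nslfdflsdjlf} handles the change of fiber map across leaves. Under (U1) this extra term vanishes, so the bound $C_2=2+B_u$ holds with room to spare.

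The remaining care point is measure-theoretic. One must ensure that the identities \eqref{good} and the bound $\|\mu_\delta|_{\gamma_i}\|_W\le 1$ hold simultaneously on a full-measure set of $\underline{x}$ and all its preimages. This follows from the nonsingularity of $\sigma$ with respect to $m$ and from (U3).
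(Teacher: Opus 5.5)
Your argument is correct and is essentially the paper's proof. It uses the same fiberwise telescoping of \eqref{good}: the weight difference is controlled by (U2.1) and \eqref{weak1}, and the fiber-map difference by the (U2.2)-based comparison lemma together with Remark \ref{toyiout}. The only cosmetic change is that you take the two telescoping steps in the opposite order.

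The one real difference is your observation that under (U1) the leaves $\gamma_{0,i}$ and $\gamma_{\delta,i}$ coincide. The paper keeps a third term and bounds it by $B_u R(\delta)$ using \eqref{new1} and $d_\theta(i\underline{x}_\delta,i\underline{x}_0)\le R(\delta)$; by your observation that term vanishes identically. This gives the sharper bound $2R(\delta)$ and shows that hypothesis \eqref{new1} is not actually needed for this lemma.
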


\begin{proof}

Our approach is to estimate the following quantity:
	
	\begin{equation}\label{12112}
		||(\func{F_0}_{\ast}-\func{F_\delta}_{\ast})\mu_{\delta }||_{{\infty}}= \esssup_{M}{||(\func{F_0}_{\ast}\mu_{\delta})|}_{\gamma }-(\func{F_\delta}_{\ast}\mu_{\delta})|_{\gamma}||_{W}.
	\end{equation}

	Let $\sigma_{\delta,i}$ denote ($1\leq i\leq \deg(\sigma)$) the branches of $\sigma_{\delta}$ defined on the cylinders $P_{i} \in \mathcal{P}$. That is, each branch is given by $\sigma_{\delta,i}=\sigma_{\delta }|_{P_{i}}$. Moreover, recall that we denote $i\underline{x}_\delta:=\sigma_{\delta,i}^{-1}(\underline{x})$ and $\gamma _{\delta, i} := \gamma _{\sigma _{\delta, i} ^{-1}(\underline{x})}$ for all $\underline{x} \in \Sigma _A ^+$. Furthermore, by (U1), there exists $R(\delta)$ such that  
	
	\begin{equation}
		d_\theta(i\underline{x}_0,i\underline{x}_\delta) \leq R(\delta) \ \ \forall i=1 \cdots \deg(f).
	\end{equation}Remember that, by (U1) $\deg (\sigma_\delta) = \deg (\sigma)$ for all $\delta \in [0,\delta_1)$.

Denoting $\func{F}_{\delta,\gamma_{\delta,i}}:=\func{F}_{\delta ,f_{\delta ,i}^{-1}(\gamma )}$ and setting $\mu := \mu_\delta$, we obtain from equation (\ref{good}) that for $\mu _{1}$-almost every $\gamma \in \Sigma_A^+ $, the following holds:

	\[
	(\func{F_0}_{\ast}\mu-\func{F_\delta}_{\ast}\mu )|_{\gamma }=\sum_{i=1}^{\deg(\sigma)}%
	\func{F}_{0,\gamma _{0,i}}{_\ast}\mu |_{\gamma _{0,i}}g_0(i\underline{x}_0)%
	-\sum_{i=1}^{\deg(\sigma)}{\func{F}_{\delta,\gamma _{\delta,i} }{_\ast }}\mu |_{\gamma _{\delta,i}}g_\delta (i\underline{x}_\delta). 
	\]%
	Therefore, we obtain
	
	\begin{equation*}
		||(\func{F_0}_{\ast}\mu-\func{F_\delta}_{\ast})\mu||_{\infty} \leq \func{A}    +   \func{B},
	\end{equation*}where 
	
	\begin{equation}\label{A}
		\func{A} := \esssup_{M}\left\vert \left\vert \sum_{i=1}^{\deg(\sigma)}%
		{\func{F}_{0,\gamma _{0,i} }{_\ast }}\mu |_{\gamma _{0,i}}g_0(i\underline{x}_0)%
		-\sum_{i=1}^{\deg(\sigma)}{\func{F}_{\delta,\gamma _{\delta,i} }{_\ast }}\mu |_{\gamma _{0,i}}g_\delta (i\underline{x}_\delta)\right\vert \right\vert _{W} 
	\end{equation}and 
	
	\begin{equation}	\label{B}
		\func{B} := \esssup_{M}\left\vert \left\vert \sum_{i=1}^{\deg(\sigma)}%
		{\func{F}_{\delta,\gamma _{\delta,i} }{_\ast }}\mu |_{\gamma _{0,i}}g_{\delta}(i\underline{x}_\delta)%
		-\sum_{i=1}^{\deg(\sigma)}%
	{\func{F}_{\delta,\gamma _{\delta,i} }{_\ast }}\mu |_{\gamma _{\delta,i}}g_{\delta}(i\underline{x}_\delta)\right\vert \right\vert _{W}.
	\end{equation}
	
	We now proceed to bound	$\func {A}$ from equation (\ref{A}). By applying the triangle inequality in a similar manner, we obtain
	
	$$ \func{A} \leq \esssup_{M}\func{A}_1(\gamma) + \esssup_{M}\func{A}_2(\gamma),$$ where

	\begin{equation}
		\func{A}_1(\gamma) := \left\vert \left\vert \sum_{i=1}^{\deg(\sigma)}%
		{\func{F}_{0,\gamma _{0,i} }{_\ast }}\mu |_{\gamma _{0,i}}g_{0}(i\underline{x}_0)
		-\sum_{i=1}^{\deg(\sigma)}{\func{F}_{\delta,\gamma _{\delta,i} }{_\ast }}\mu |_{\gamma _{0,i}}g_{0}(i\underline{x}_0)\right\vert \right\vert _{W}
	\end{equation}and

	\begin{equation}
		\func{A}_2(\gamma) := \left\vert \left\vert \sum_{i=1}^{\deg(\sigma)}%
		{\func{F}_{\delta,\gamma _{\delta,i} }{_\ast }}\mu |_{\gamma _{0,i}}g_{0}(i\underline{x}_0)
		-\sum_{i=1}^{\deg(\sigma)}{\func{F}_{\delta,\gamma _{\delta,i} }{_\ast }}\mu |_{\gamma _{0,i}}g_{\delta}(i\underline{x}_\delta)\right\vert \right\vert _{W}.
	\end{equation}Each summand will be analyzed individually. For $\func{A}_1 $, we observe that

	\begin{eqnarray*}
		\func{A}_1(\gamma) &\leq &  \sum_{i=1}^{\deg(\sigma)}%
		\left\vert \left\vert {\func{F}_{0,\gamma _{0,i} }{_\ast }}\mu |_{\gamma _{0,i}}g_{0}(i\underline{x}_0)
		-\sum_{i=1}^{\deg(\sigma)}{\func{F}_{\delta,\gamma _{\delta,i} }{_\ast }}\mu |_{\gamma _{0,i}}g_{0}(i\underline{x}_0)\right\vert \right\vert _{W}
		\\&\leq &  \sum_{i=1}^{\deg(\sigma)}%
		\left\vert \left\vert ({\func{F}_{0,\gamma _{0,i} }{_\ast }}- \func{F}_{\delta,\gamma _{\delta,i} }{_\ast })\mu |_{\gamma _{0,i}}\right\vert \right\vert _{W}g_{0}(i\underline{x}_0)
		\\&\leq &  \sum_{i=1}^{\deg(\sigma)}%
		\left\vert \left\vert ({\func{F}_{0,\gamma _{0,i} }{_\ast }}- \func{F}_{0,\gamma _{\delta,i} }{_\ast })\mu |_{\gamma _{0,i}}\right\vert \right\vert _{W}g_{0}(i\underline{x}_0) \\&+&  \sum_{i=1}^{\deg(\sigma)}%
		\left\vert \left\vert ({\func{F}_{0,\gamma _{\delta,i} }{_\ast }}- \func{F}_{\delta,\gamma _{\delta,i} }{_\ast })\mu |_{\gamma _{0,i}}\right\vert \right\vert _{W}g_{0}(i\underline{x}_0).
	\end{eqnarray*}For $\mu=\mu_{ \delta}$, applying Remark \ref{toyiout}, Equation (\ref{nslfdflsdjlf}), and Lemma \ref{çhjghljk} to the above, we obtain
	
	\begin{eqnarray*}
		\func{A}_1(\gamma) &\leq &  \left(\sum_{i=1}^{\deg(\sigma)}%
		g_{0}(i\underline{x}_0)\right)  R(\delta) ||\mu |_{\gamma _{0,i}}||_W
		\\&\leq & R(\delta). 
	\end{eqnarray*}For $\func{A}_2(\gamma)$, by (U2.1) we have
	\begin{eqnarray*}
		\func{A}_2(\gamma) &\leq&  \sum_{i=1}^{\deg(\sigma)}%
		\left\vert \left\vert {\func{F}_{\delta,\gamma _{\delta,i} }{_\ast }}\mu |_{\gamma _{0,i}}g_{0}(i\underline{x}_0)
		-{\func{F}_{\delta,\gamma _{\delta,i} }{_\ast }}\mu |_{\gamma _{0,i}}g_{\delta}(i\underline{x}_\delta)\right\vert \right\vert _{W}
		\\&\leq& \sum_{i=1}^{\deg(\sigma)}%
		\left\vert g_{0}(i\underline{x}_0)
		-g_{\delta}(i\underline{x}_\delta)\right\vert  \left\vert \left\vert {\func{F}_{\delta,\gamma _{\delta,i} }{_\ast }}\mu |_{\gamma _{0,i}}\right\vert \right \vert _{W}
		\\&\leq& \sum_{i=1}^{\deg(\sigma)}%
		\left\vert g_{0}(i\underline{x}_0)
		-g_{\delta}(\gamma _{\delta,i})\right\vert
		\\&\leq&  R(\delta).
	\end{eqnarray*}We now estimate $\func{B}$. By Remark \ref{toyiout}, we observe that $\sum_{i=1}^{\deg(\sigma)} \left\vert g_{\delta}(i\underline{x}_\delta)\right\vert =1$ $m$-almost every point. Consequently, we obtain
		\begin{eqnarray*}
		\func{B} &\leq&  \esssup_{\Sigma_A^+} \sum_{i=1}^{\deg(\sigma)}%
		\left\vert \left\vert {\func{F}_{\delta,\gamma _{\delta,i} }{_\ast }}\mu |_{\gamma _{0,i}}g_{\delta}(i\underline{x}_\delta)
		-{\func{F}_{\delta,\gamma _{\delta,i} }{_\ast }}\mu |_{\gamma _{\delta,i}}g_{\delta}(i\underline{x}_\delta)\right\vert \right\vert _{W}
		\\&\leq&  \esssup_{\Sigma_A^+} \sum_{i=1}^{\deg(\sigma)} \left\vert g_{\delta}(i\underline{x}_\delta)\right\vert
		\left\vert \left\vert {\func{F}_{\delta,\gamma _{\delta,i} }{_\ast }}(\mu |_{\gamma _{0,i}}-\mu |_{\gamma _{\delta,i}})\right\vert \right\vert _{W}
		\\&\leq&  \esssup_{\Sigma_A^+} \sum_{i=1}^{\deg(\sigma)} \left\vert g_{\delta}(i\underline{x}_\delta)\right\vert
		\left\vert \left\vert \mu |_{\gamma _{0,i}}-\mu |_{\gamma _{\delta,i}}\right\vert \right\vert _{W}
		\\&\leq&  \esssup_{\Sigma_A^+} \sum_{i=1}^{\deg(\sigma)} \left\vert g_{\delta}(i\underline{x}_\delta)\right\vert
		d_\theta(i\underline{x}_\delta,i\underline{x}_0)|\mu|_\theta
		\\&\leq& \esssup_{\Sigma_A^+} \sum_{i=1}^{\deg(\sigma)} \left\vert g_{\delta}(i\underline{x}_\delta)\right\vert
		R(\delta)  |\mu|_\theta
		\\&\leq&  R(\delta)  B_u.
	\end{eqnarray*}Combining these observations, we obtain
	\begin{eqnarray*}
		||(\func{F_0{_\ast }}-\func{F_\delta{_\ast }})\mu_{\delta }||_{\infty} & \leq & \func{A}  +   \func{B}
		\\& \leq & \esssup_{M} \func{A}_1(\gamma)    + \esssup_{M} \func{A}_2(\gamma) +   \func{B}
		\\& \leq & R(\delta) +  R(\delta) + R(\delta)  B_u
		\\& \leq &C_2 R(\delta),
	\end{eqnarray*}where $C_2:=B_u +2.$
\end{proof}

The following result is a key tool in proving Theorem \ref{d}. It establishes that the function $$\delta \longmapsto |\mu_{\delta }|_\theta$$(see Definition \ref{Lips3}) is uniformly bounded, where  $\{\mu_\delta\}_{\delta \in [0,1)}$ denotes the family of $F_{\delta }$-invariant probability measures associated with an admissible perturbation $\{F_{\delta }\}_{\delta \in [0,1)}$ of $F(=F_0)$. 

We begin with a preliminary lemma.

\begin{lemma}	Given an admissible $R(\delta)$-perturbation $\{F_{\delta }\}_{\delta \in [0,1)}$, there exists a constant $C_{1,u}>0$ such that for every positive measure $\mu \in \mathcal{L}_\theta$ the following inequality holds:
	\begin{equation}\label{er}
		|\Gamma^{\omega}_{\func{F{^*}}^n\mu}|_\theta ^\omega \leq \theta ^n |\Gamma _\mu^\omega|_\theta + \dfrac{C_{1,u}}{1-\theta}||\mu||_\infty,
	\end{equation}for all $\delta \in [0,1)$ and all $n \geq 0$.
	\label{las123rtryrdfd2}
\end{lemma}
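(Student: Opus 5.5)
The plan is to follow the proof of Theorem \ref{hdjfhsdfjsd} for each fixed $\delta$, and to check that every constant appearing there can be chosen independently of $\delta$; this uniformity is what hypothesis (A2) supplies. Concretely, I define $C_{1,u} := \sup_{\delta} C_{1,\delta}$, which is finite by (A2). Note that the operator in \eqref{er} must be read as $\func{F_\delta}_{\ast}$. The argument has two steps: a one-step Lasota--Yorke inequality uniform in $\delta$, followed by iteration.

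\textbf{Step 1 (one-step inequality).} Fix $\delta\in[0,1)$. The map $F_\delta$ satisfies (G1) and (G2), with constants $\alpha_\delta$ and $H_\delta$ and potential $g_\delta$. Proposition \ref{kjsdkjduidf} therefore applies to $F_\delta$ and gives, for every positive $\mu\in\mathcal{L}_\theta$ and every representation $\omega$,
\begin{equation*}
|\Gamma^{\omega}_{\func{F_\delta}_{\ast}\mu}|_\theta^\omega \leq \theta |\mu|_\theta^\omega + C_{1,\delta}\|\mu\|_\infty \leq \theta |\mu|_\theta^\omega + C_{1,u}\|\mu\|_\infty .
\end{equation*}
If one wants to be careful, the proof of Proposition \ref{kjsdkjduidf} can be rerun using formula \eqref{good}. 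One splits $(\func{F_\delta}_\ast\mu)|_{\gamma_1}-(\func{F_\delta}_\ast\mu)|_{\gamma_2}$ into three kinds of terms. The first is the variation of the fiber measures $\mu|_{\gamma_{\delta,i}}$ between the two preimage fibers; since $\sigma_i^{-1}$ contracts $d_\theta$ by the factor $\theta$, this term produces $\theta|\mu|^\omega_\theta$. The second is the variation of the fiber maps $F_{\delta,\gamma_{\delta,i}}$, which is controlled by $H_\delta$ through (G2). The third is the variation of the weights $g_\delta(i\underline{x})$, which is controlled by $|g_\delta|_\theta$. In the last two terms, Lemma \ref{niceformulaac} together with $\sum_i g_\delta(i\underline{x}_\delta)=1$ (Remark \ref{toyiout}) bounds the fiberwise masses by $\|\mu\|_\infty$. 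Adding up reproduces exactly the constant $C_{1,\delta}$.

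\textbf{Step 2 (iteration).} Positivity of $\mu$ is preserved by $\func{F_\delta}_\ast$, so Step 1 can be applied to $\func{F_\delta}_\ast^{\,k}\mu$ for every $k$. By Proposition \ref{agorasim}, $\|\func{F_\delta}_\ast^{\,k}\mu\|_\infty\leq\|\mu\|_\infty$ uniformly in $\delta$. Inducting on $n$ gives
\begin{equation*}
|\Gamma^{\omega}_{\func{F_\delta}_\ast^{\,n}\mu}|_\theta^\omega \leq \theta^n|\mu|^\omega_\theta + C_{1,u}\|\mu\|_\infty\sum_{k=0}^{n-1}\theta^k \leq \theta^n|\mu|^\omega_\theta + \frac{C_{1,u}}{1-\theta}\|\mu\|_\infty .
\end{equation*}
The case $n=0$ is trivial. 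Here $\omega$ denotes the disintegration of the iterate induced from $\omega$ via formula \eqref{good}, exactly as in Theorem \ref{hdjfhsdfjsd}.

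\textbf{Main obstacle.} The difficulty lies in Step 1: one must confirm that the constant in Proposition \ref{kjsdkjduidf} depends on $F_\delta$ only through $H_\delta$, $|g_\delta|_\theta$ and $\deg(\sigma)$. In particular it must not depend on $\alpha_\delta$ or on the measure $m_\delta$; since $\alpha_\delta<1$, the contraction factor is at most $1$ in any case. Once this is verified, (A2) makes the bound uniform in $\delta$, and the remainder of the argument is the routine geometric-series iteration above.
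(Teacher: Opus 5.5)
Your proposal is correct and takes the paper's route. The paper applies the already-iterated inequality of Theorem \ref{hdjfhsdfjsd} to each $F_\delta$ and sets $C_{1,u}:=\sup_\delta C_{1,\delta}<\infty$ by (A2); you instead re-derive that iteration from the one-step Proposition \ref{kjsdkjduidf} plus weak contraction, which is harmless but unnecessary (and for $\delta\ge\delta_1$ you should cite Proposition \ref{l1} applied to $F_\delta$ rather than Proposition \ref{agorasim}, which is only stated on $[0,\delta_1)$).
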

\begin{proof}
By applying Theorem \ref{hdjfhsdfjsd} to each mapping $F_\delta$, we obtain
	\begin{equation*}
	|\Gamma^{\omega}_{\func{F{^*}}^n\mu}|_\theta ^\omega \leq \theta^n |\mu|_\theta ^\omega + \dfrac{C_{1,\delta}}{1- \theta} ||\mu||_\infty,
	\end{equation*}where $C_{1,\delta}$ was defined in Proposition \ref{kjsdkjduidf} by $C_{1, \delta} = \max \{ H_\delta \theta +\theta \deg(\sigma) |g_\delta|_\theta , 2\}$. Using assumption (A2), we then define $C_{1,u}:= \displaystyle{\sup_\delta C_{1,\delta}},$ which completes the proof.
	
\end{proof}

\begin{remark}\label{riirorpdf}
    Fix a probability measure $\nu$ on $K$ and define the product measure $m_1 = m \times \nu$, where $m$ is the Markov measure established in Subsection \ref{sec1}. We consider the trivial disintegration $\omega_0 = (\{m_{1, \gamma}\}_{\gamma}, \phi_1)$, where the fiber measures are given by the push-forward $m_{1, \gamma} = \func{\pi_{2, \gamma}^{-1 *}} \nu$ for all $\gamma$, and the marginal density is $\phi_1 \equiv 1$. Under this construction, it follows that the fiberwise restriction satisfies:
    \begin{equation*}
    m_1|_\gamma = \nu, \quad \forall \, \gamma.
    \end{equation*}
    Consequently, the associated path $\Gamma^{\omega_0}_{m_1}$ is constant, i.e., $\Gamma^{\omega_0}_{m_1}(\gamma) = \nu$ for all $\gamma \in \mathcal{F}^s$, which directly implies:
    \begin{equation}\label{oiyiye}
    |m_1|_\theta^{\omega_0} = 0.
    \end{equation}
    For each $n \in \mathbb{N}$, let $\omega_n$ be the disintegration of the iterated measure $\func{F}^{*n} m_1$ induced by $\omega_0$ through the recursive application of Lemma \ref{transformula}.

   To describe the path $\Gamma^{\omega_n}_{\func{F}_\delta^{*n} m_1}$ associated with a fixed parameter $\delta \in [0, 1)$, we recall some combinatorial notation. A finite string $\underline{a} = (a_0, a_1, \dots, a_{n-1}) \in \{1, 2, \dots, N\}^n$ is said to be \textit{allowed} if the transitions are admissible, i.e., $A_{a_{i-1}, a_i} = 1$ for all $i = 1, \dots, n-1$. We denote the set of all such allowed strings of length $n$ by $\mathcal{A}_n$. 

For any $\underline{a} \in \mathcal{A}_n$, the inverse branch $\sigma_{\underline{a}}^{-n}$ is well-defined on the set $\{ \underline{x} \in \Sigma_A^+ : A_{a_{n-1}, x_0} = 1 \}$, mapping into the cylinder $[0; \underline{a}] := \{ \underline{y} \in \Sigma_A^+ : y_0 = a_0, \dots, y_{n-1} = a_{n-1} \}$. This map is explicitly given by:
\begin{equation*}
\sigma^{-n}_{\underline{a}}(\underline{x}) = \sigma^{-1}_{a_0} \circ \sigma^{-1}_{a_1} \circ \dots \circ \sigma^{-1}_{a_{n-1}}(\underline{x}) = \underline{a}\underline{x}.
\end{equation*}

Following the construction in Proposition \ref{niceformulaab}, let us define the iterated fiber operator for the map $\func{F}_\delta$ and the corresponding Jacobian:
\begin{equation*}
\func{F}_{\delta, \gamma_{\underline{a}}}^{*n} := \func{F}_{\delta, \gamma_{a_0}}^* \circ \func{F}_{\delta, \gamma_{a_1}}^* \circ \dots \circ \func{F}_{\delta, \gamma_{a_{n-1}}}^*
\end{equation*}
and
\begin{equation*}
J_{m, \underline{a}}(\sigma_{\underline{a}}^{-n}(\underline{x})) := J_{m, \sigma_{a_0}}(\sigma^{-1}_{a_0}(\underline{x})) \cdot J_{m, \sigma_{a_1}}(\sigma^{-2}_{a_0 a_1}(\underline{x})) \cdots J_{m, \sigma_{a_{n-1}}}(\sigma_{\underline{a}}^{-n}(\underline{x})).
\end{equation*}
Then, for $m$-almost every $\underline{x} = (x_j)_{j \in \mathbb{Z}^+}$, the path of the $n$-th iterate under the perturbed operator $\func{F}_\delta^*$ satisfies:
\begin{equation}\label{oiyiy}
\Gamma^{\omega_n}_{\func{F}_\delta^{*n} m_1}(\underline{x}) = \sum_{\underline{a}x_0 \in \mathcal{A}_{n+1}} \frac{\func{F}_{\delta, \gamma_{\underline{a}}}^{*n} \nu}{J_{m, \underline{a}}(\sigma_{\underline{a}}^{-n}(\underline{x}))}.
\end{equation}
By applying Theorem \ref{hdjfhsdfjsd} to the initial condition established in \eqref{oiyiye}, we conclude that the Lipschitz constant of the iterated path for the system $F_\delta$ remains uniformly bounded:
\begin{equation}\label{nbmjsdfjf}
|\Gamma^{\omega_n}_{\func{F}_\delta^{*n} m_1}|_\theta^\omega \leq \frac{C_1}{1 - \theta}, \quad \forall \, n \geq 1.
\end{equation}
\end{remark}

The following lemma is central to our stability analysis, as it establishes that the family of invariant measures inherits a uniform spatial regularity that is independent of the perturbation parameter $\delta$. This uniform bound is the key ingredient for proving the statistical stability of the system.

\begin{lemma} \label{thshgf}
    Consider an admissible $R(\delta)$-perturbation $\{F_\delta\}_{\delta \in [0,1)}$ and let $\{\mu_\delta\}_{\delta \in [0,1)}$ be the corresponding family of unique $F_\delta$-invariant probability measures in $S^\infty$. Then, there exists a uniform constant $B_u > 0$ such that:
    \begin{equation*}
        |\mu_\delta|_\theta \leq B_u,
    \end{equation*}
    for all $\delta \in [0,1)$. Consequently, the family $\{\mu_\delta\}_{\delta \in [0,1)}$ satisfies Equation \eqref{new1}, and the perturbation $\{F_\delta\}_{\delta \in [0,1)}$ satisfies Equation \eqref{oiuteiyoey}.
\end{lemma}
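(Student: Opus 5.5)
The plan is to realize each $\mu_\delta$ as the limit of the iterates $\func{F}_\delta^{*n} m_1$ of the product measure $m_1 = m\times\nu$ from Remark \ref{riirorpdf}, whose Lipschitz constants are controlled uniformly in $n$ and $\delta$. We then pass the bound to the limit using lower semicontinuity of the Lipschitz seminorm.

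\textbf{Step 1: uniform bound on the iterates.} Fix $\delta\in[0,1)$ and use the trivial disintegration $\omega_0$ of $m_1$, for which $|m_1|^{\omega_0}_\theta=0$ by \eqref{oiyiye}. Since $m_1$ is a probability measure, $\|m_1\|_\infty = 1$ by \eqref{simples}. Lemma \ref{las123rtryrdfd2}, which relies on (A2), then gives
\[
|\Gamma^{\omega_n}_{\func{F}_\delta^{*n}m_1}|^{\omega_n}_\theta \le \frac{C_{1,u}}{1-\theta}
\]
for all $n\ge 1$ and all $\delta$. Note that the constant is $C_{1,u}$, not $C_1$ as in \eqref{nbmjsdfjf}.

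\textbf{Step 2: convergence of the iterates to $\mu_\delta$.} We need $\func{F}_\delta^{*n}m_1 \to \mu_\delta$ fiberwise. Since $m_1 \in S^\infty$ with $\phi_1\equiv1$, the difference $m_1-\mu_\delta$ has zero total mass. By Remark \ref{rem123} it lies in $\mathcal V$ for the system $F_\delta$. Proposition \ref{5.8}, applied to $F_\delta$ (which satisfies (G1)), yields
\[
\|\func{F}_\delta^{*n}m_1-\mu_\delta\|_\infty \le D_{2,\delta}\beta_{1,\delta}^n\|m_1-\mu_\delta\|_{S^\infty} \to 0 .
\]
The constants may depend on $\delta$. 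This is harmless, because we only need convergence for each fixed $\delta$; uniformity enters only through Step 1. Consequently there is a full $m$-measure set $\Omega$ on which $\|(\func{F}_\delta^{*n}m_1)|_{\gamma_{\underline x}}-\mu_\delta|_{\gamma_{\underline x}}\|_W\to0$. To get this, take the countable intersection of the full measure sets where each ess sup is attained, and fix a representative $\omega$ of $\mu_\delta$.

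\textbf{Step 3: passage to the limit.} For $\underline x^1,\underline x^2\in\Omega$, the triangle inequality gives
\[
\|\mu_\delta|_{\gamma_1}-\mu_\delta|_{\gamma_2}\|_W \le 2\|\func{F}_\delta^{*n}m_1-\mu_\delta\|_\infty + \frac{C_{1,u}}{1-\theta}d_\theta(\underline x^1,\underline x^2).
\]
Letting $n\to\infty$ shows $|\mu_\delta|_\theta^\omega\le C_{1,u}/(1-\theta)=:B_u$. We take $\mu_\delta$ positive, so positivity causes no issue. The final claims then follow directly: \eqref{new1} holds with this $B_u$, and Lemma \ref{UF2ass} gives \eqref{oiuteiyoey}.

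\textbf{Main obstacle.} The delicate point is the bookkeeping of representatives. The bound in Step 1 holds for the specific disintegrations $\omega_n$, while the limit is taken for some representative of $\mu_\delta$. Both $\|\cdot\|_\infty$ and the fiber restrictions are independent of the representative off a null set (Lemma \ref{kv}, Remark \ref{ghtyhh}), so the difference estimate above may be applied with $\omega_n$ on one side and $\omega$ on the other, excluding a countable union of null sets. The key condition is that the exceptional set must not depend on the pair $(\underline x^1,\underline x^2)$, which matches the essential supremum over pairs in Definition \ref{Lips3}.
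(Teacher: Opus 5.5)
Your proposal is correct and follows essentially the same route as the paper. You approximate $\mu_\delta$ by the iterates $\func{F}_\delta^{*n} m_1$ of the product measure from Remark \ref{riirorpdf}, bound their Lipschitz constants by $C_{1,u}/(1-\theta)$ uniformly in $n$ and $\delta$ via (A2), pass this bound to the limit on a full-measure set, and finish with Lemma \ref{UF2ass}. The only cosmetic differences are that you get the convergence from Proposition \ref{5.8} applied to the zero-mass measure $m_1-\mu_\delta$ instead of from Theorem \ref{spgap}, and that you pass to the limit through an explicit triangle inequality rather than a pointwise limit of the difference quotients.
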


\begin{proof}
    By Theorem \ref{probun}, for each $\delta \in [0,1)$, there exists a unique $F_\delta$-invariant probability measure $\mu_\delta \in S^\infty$. Following the framework of Definition \ref{defd}, we fix a representative path $\Gamma^\omega_{\mu_\delta}|_{\widehat{M_\delta}}$ within the equivalence class $\Gamma_{\mu_\delta}$. Now, consider the reference measure $m_1$ and its iterates $\func{F}_\delta^{*n} m_1$ as defined in Remark \ref{riirorpdf}. The application of Remark \ref{riirorpdf} to these iterates generates a sequence of representations $\{\Gamma^{\omega_n}_{\func{F}_\delta^{*n} m_1}\}_{n \in \mathbb{N}}$. For brevity, we denote:
    \begin{equation*}
    \Gamma_\delta^n := \Gamma_{\func{F}_\delta^{*n} m_1}^{\omega_n}|_{\widehat{M_\delta}} \quad \text{and} \quad \Gamma_\delta := \Gamma^\omega_{\mu_\delta}|_{\widehat{M_\delta}}.
    \end{equation*}
    
    According to the Spectral Gap Theorem (Theorem \ref{spgap}), the sequence of measures $\{\func{F}_\delta^{*n} m_1\}_{n \in \mathbb{N}}$ converges to $\mu_\delta$ in the $L^\infty$ sense. This convergence implies that the sequence of fiber paths $\{\Gamma_\delta^n\}_{n \in \mathbb{N}}$ converges $m$-a.e. to $\Gamma_\delta$ in the space $\mathcal{SB}(K)$ with respect to the Wasserstein metric. In particular, we have pointwise convergence $\Gamma_\delta^n(x) \to \Gamma_\delta(x)$ on a subset of full measure $\widehat{M_\delta} \subset \Sigma_A^+$.
    
    We now show that the Lipschitz regularity is preserved in the limit. For any pair of points $x, y \in \widehat{M_\delta}$, the continuity of the norm implies:
    \begin{equation*}
        \lim_{n \to \infty} \frac{\| \Gamma_\delta^n(x) - \Gamma_\delta^n(y) \|_W}{d_\theta(x, y)} = \frac{\| \Gamma_\delta(x) - \Gamma_\delta(y) \|_W}{d_\theta(x, y)}.
    \end{equation*}
    By Theorem \ref{hdjfhsdfjsd} (as invoked in Remark \ref{riirorpdf}), the left-hand side is uniformly bounded: $|\Gamma_\delta^n|_\theta \leq \frac{C_{1,u}}{1-\theta}$ for all $n \geq 1$, where $C_{1,u}$ is a constant independent of $\delta$. It follows that:
    \begin{equation*}
        \frac{\| \Gamma_\delta(x) - \Gamma_\delta(y) \|_W}{d_\theta(x, y)} \leq \frac{C_{1,u}}{1-\theta},
    \end{equation*}
    which yields the estimate $|\Gamma^\omega_{\mu_\delta}|_\theta \leq \frac{C_{1,u}}{1-\theta}$. Taking the infimum over all valid representations $\omega \in \Gamma_{\mu_\delta}$, we obtain $|\mu_\delta|_\theta \leq B_u$, where $B_u := \frac{C_{1,u}}{1-\theta}$. This completes the proof of the first assertion. The second part of the lemma follows immediately from Lemma \ref{UF2ass}.
\end{proof}

\subsection{Perturbation Theory for Operators}\label{perturbationoperators}

In this section, we provide the abstract framework necessary to establish the statistical stability of our system. By viewing the transfer operators as elements of a family of perturbed operators, we can derive quantitative estimates for the variation of their fixed points.

\begin{definition}
    Let $(B_{w}, \|\cdot\|_{w})$ and $(B_{s}, \|\cdot\|_{s})$ be two normed vector spaces such that $B_{s} \subset B_{w}$ and $\|\cdot\|_{s} \geq \|\cdot\|_{w}$. Suppose that $\func{T}_{\delta}: B_{w} \to B_{w}$ and $\func{T}_{\delta}: B_{s} \to B_{s}$ are well-defined operators for each $\delta \in [0, 1)$, and let $\mu_{\delta} \in B_{s}$ denote a fixed point for $\func{T}_{\delta}$. We say that $\{\func{T}_{\delta}\}_{\delta \in [0, 1)}$ is a \textbf{$R(\delta)$-family of operators} if the following conditions are satisfied:
    
    \begin{enumerate}
        \item[(O1)] There exist a constant $C > 0$ and a function $\delta \mapsto R(\delta) \in \mathbb{R}^+$ such that $\lim_{\delta \to 0^+} R(\delta) \log(\delta) = 0$, and the difference between the unperturbed and perturbed operators satisfies:
        \begin{equation*}
            \|(\func{T}_{0} - \func{T}_{\delta})\mu_{\delta}\|_{w} \leq R(\delta) C, \quad \forall \delta \in [0, 1);
        \end{equation*}
        
        \item[(O2)] The fixed points are uniformly bounded in the strong norm, i.e., there exists $\func{Y} > 0$ such that:
        \begin{equation*}
            \|\mu_{\delta}\|_{s} \leq \func{Y}, \quad \forall \delta \in [0, 1);
        \end{equation*}
        
        \item[(O3)] The unperturbed operator $\func{T}_0$ exhibits a spectral gap in the sense that there exist constants $0 < \rho_2 < 1$ and $C_2 > 0$ such that:
        \begin{equation*}
            \|\func{T}^{n}_0 \mu\|_{w} \leq \rho_2^n C_2 \|\mu\|_{s}, \quad \forall \mu \in \mathcal{V}_s := \{ \mu \in B_{s} : \mu(\Sigma) = 0 \};
        \end{equation*}
        
        \item[(O4)] The family is uniformly power-bounded in the weak norm: there exists $M_2 > 0$ such that for all $\delta \in [0, 1)$, $n \in \mathbb{N}$, and $\nu \in B_{s}$:
        \begin{equation*}
            \|\func{T}_{\delta}^n \nu\|_{w} \leq M_2 \|\nu\|_{w}.
        \end{equation*}
    \end{enumerate}
    The spaces $(B_{w}, \|\cdot\|_{w})$ and $(B_{s}, \|\cdot\|_{s})$ are referred to as the \textbf{weak} and \textbf{strong} spaces of the family, respectively.
\end{definition}

The following lemma establishes a quantitative relationship between the variation of the fixed points $\{\mu_\delta\}_{\delta \in [0, 1)}$ and the perturbation parameter $\delta$. Specifically, it demonstrates that the mapping $\delta \mapsto \mu_{\delta}$ is continuous at $\delta = 0$ with respect to the weak norm and provides an explicit modulus of continuity. For a detailed proof of this result, we refer the reader to \cite{RRRSTAB}, noting that the estimate presented here corresponds to the specific case where $\zeta = 1$.

\begin{lemma}[Quantitative Stability for Fixed Points] \label{dlogd}
    Suppose $\{\func{T}_{\delta}\}_{\delta \in [0, 1)}$ is a $R(\delta)$-family of operators, where $\mu_{0}$ is the unique fixed point of $\func{T}_{0}$ in $B_{w}$ and $\mu_{\delta}$ is a fixed point of $\func{T}_{\delta}$. Then, there exist constants $D_1 < 0$ and $\delta_0 \in (0, 1)$ such that for all $\delta \in [0, \delta_0)$, it holds:
    \begin{equation*}
        \|\mu_{\delta} - \mu_{0}\|_{w} \leq D_1 R(\delta) \log \delta.
    \end{equation*}
\end{lemma}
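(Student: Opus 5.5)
The plan is the standard telescoping argument. Fix $\delta$ and $N\ge 1$ to be chosen later. Since $\mu_\delta=\func{T}_\delta\mu_\delta$ and $\mu_0=\func{T}_0\mu_0$, write
\begin{equation*}
\mu_\delta-\mu_0=\func{T}_0^N(\mu_\delta-\mu_0)+(\mu_\delta-\func{T}_0^N\mu_\delta),
\end{equation*}
and expand the second term as a telescoping sum,
\begin{equation*}
\mu_\delta-\func{T}_0^N\mu_\delta=\func{T}_\delta^N\mu_\delta-\func{T}_0^N\mu_\delta=\sum_{k=0}^{N-1}\func{T}_0^{k}(\func{T}_\delta-\func{T}_0)\func{T}_\delta^{N-1-k}\mu_\delta=\sum_{k=0}^{N-1}\func{T}_0^{k}(\func{T}_\delta-\func{T}_0)\mu_\delta .
\end{equation*}
By (O4) applied to $\func{T}_0$ and by (O1), each summand has weak norm at most $M_2 C R(\delta)$. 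Hence the second term is bounded by $N M_2 C R(\delta)$.

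For the first term, observe that $\mu_\delta-\mu_0\in\mathcal{V}_s$, because both are probability measures and their difference has zero total mass. By (O3) and (O2),
\begin{equation*}
\|\func{T}_0^N(\mu_\delta-\mu_0)\|_w\le C_2\rho_2^N\|\mu_\delta-\mu_0\|_s\le 2C_2\func{Y}\rho_2^N .
\end{equation*}
We use (O2) for both $\delta$ and $0$. Combining the two pieces gives
\begin{equation*}
\|\mu_\delta-\mu_0\|_w\le 2C_2\func{Y}\rho_2^N+NM_2CR(\delta).
\end{equation*}

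Next we choose $N=\lceil \log\delta/\log\rho_2\rceil$, so that $\rho_2^N\le\delta$ and $N\le \log\delta/\log\rho_2+1$. This yields
\begin{equation*}
\|\mu_\delta-\mu_0\|_w\le 2C_2\func{Y}\delta+M_2CR(\delta)\Big(\frac{\log\delta}{\log\rho_2}+1\Big).
\end{equation*}
Both $\log\delta$ and $\log\rho_2$ are negative. So the main term is $\frac{M_2C}{|\log\rho_2|}R(\delta)|\log\delta|$, that is, a negative constant times $R(\delta)\log\delta$. This matches the sign convention $D_1<0$ in the statement.

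The remaining step, which I expect to be the main obstacle, is absorbing the leftover terms $2C_2\func{Y}\delta$ and $M_2CR(\delta)$ into a multiple of $R(\delta)|\log\delta|$ for small $\delta$.
\begin{itemize}
\item For $R(\delta)$ this is immediate, since $|\log\delta|\ge1$ once $\delta<e^{-1}$.
\item For the term $\delta$, one needs $\delta\lesssim R(\delta)|\log\delta|$. This is automatic when $R(\delta)\ge\delta$, which is the intended situation; in \cite{RRRSTAB} it is effectively assumed through the admissibility conditions.
\end{itemize}
If $R$ could be much smaller than $\delta$, I would instead choose $N$ with $\rho_2^N\approx R(\delta)$, namely $N\approx \log R(\delta)/\log\rho_2$. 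Then the bound is $O(R(\delta)|\log R(\delta)|)$, and I would compare this with $R(\delta)|\log\delta|$ using the hypothesis on $R$. So I would first handle the case $R(\delta)\ge\delta$, which covers the applications, and then pick $\delta_0$ small enough and $D_1$ accordingly negative to finish.
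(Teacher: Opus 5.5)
Your argument is the standard telescoping proof behind the result the paper cites from \cite{RRRSTAB}; the paper gives no proof of its own. With $Y$ the constant of (O2), everything up to
\[
\|\mu_\delta-\mu_0\|_w\le 2C_2Y\delta+M_2CR(\delta)\Big(\frac{\log\delta}{\log\rho_2}+1\Big)
\]
is correct, given that $\mu_\delta$ and $\mu_0$ are normalized so that $\mu_\delta-\mu_0\in\mathcal{V}_s$. The gap is the last step, which you flag and then defer. To absorb $2C_2Y\delta$ into $|D_1|\,R(\delta)|\log\delta|$ you need $\delta=O(R(\delta)|\log\delta|)$, i.e.\ a \emph{lower} bound on $R$. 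The only hypothesis on $R$, namely $R(\delta)\log\delta\to0$, is an upper bound. Your fallback $\rho_2^N\approx R(\delta)$ gives $O(R(\delta)|\log R(\delta)|)$, and the ``comparison using the hypothesis on $R$'' fails for the same reason: nothing bounds $|\log R(\delta)|$ by a multiple of $|\log\delta|$. Handling only the case $R(\delta)\ge\delta$ proves a weaker lemma, not the stated one.

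No sharper argument can close this, because the lemma is false in the stated generality. Already with $R(\epsilon)=\epsilon$, the rate $\epsilon|\log\epsilon|$ is attained under (O1)--(O4). Work on $\mathbb{N}_0$ with $\|x\|_w=\sum_j|x_j|$ and $\|x\|_s=\sum_j\lambda^j|x_j|$, where $\lambda>1$. Let $\widetilde T_0$ move the mass at each $j\ge1$ to $j-1$ and keep the mass at $0$. Let $\widetilde T_\epsilon$ do the same, but also send a fraction $\epsilon$ of the mass at $0$ to $L=\lfloor\log(1/\epsilon)/\log\lambda\rfloor$. Then (O1)--(O4) hold with $C=2$, $Y=1+\lambda/(\lambda-1)$, $C_2=2$, $\rho_2=1/\lambda$ and $M_2=1$. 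Writing $e_0$ for the unit mass at $0$, one computes $\|\widetilde\mu_\epsilon-e_0\|_w=2\epsilon L/(1+\epsilon L)\approx 2\epsilon|\log\epsilon|/\log\lambda$. Now reparametrize by $T_\delta:=\widetilde T_{e^{-1/\delta}}$. This family satisfies (O1)--(O4) with $R(\delta)=e^{-1/\delta}$, and $R(\delta)\log\delta\to0$. Yet $\|\mu_\delta-\mu_0\|_w\gtrsim e^{-1/\delta}/\delta$, which is not $O(e^{-1/\delta}|\log\delta|)$.

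So an extra hypothesis is required, for instance $R(\delta)\ge\delta^k$ for some $k>0$. Then $N=\lceil k\log\delta/\log\rho_2\rceil$ gives $\rho_2^N\le R(\delta)$. For $\delta<e^{-1}$ your estimate becomes $\|\mu_\delta-\mu_0\|_w\le 2C_2YR(\delta)+M_2CR(\delta)\big(k\log\delta/\log\rho_2+1\big)\le D_1R(\delta)\log\delta$, with $D_1=-\big(2C_2Y+M_2C+kM_2C/|\log\rho_2|\big)$. This covers the linear case $R(\delta)=K_5\delta$ of Corollary \ref{htyttigui}.
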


The following lemma bridges our specific dynamical system with the abstract perturbation framework developed in the previous section.

\begin{lemma}\label{rrr}
    Let $\{F_\delta\}_{\delta \in [0,1)}$ be an admissible $R(\delta)$-perturbation and let $\{\func{F}^*_\delta\}_{\delta \in [0,1)}$ be the induced family of transfer operators. Then, $\{\func{F}^*_\delta\}_{\delta \in [0,1)}$ constitutes an $R(\delta)$-family of operators, where $(\mathcal{L}^{\infty}, \|\cdot\|_\infty)$ and $(S^\infty, \|\cdot\|_{S^\infty})$ serve as the weak and strong spaces of the family, respectively.
\end{lemma}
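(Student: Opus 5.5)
The proof consists of checking conditions (O1)--(O4) one at a time. We take $B_w=\mathcal{L}^\infty$ with $\|\cdot\|_\infty$ and $B_s=S^\infty$ with $\|\cdot\|_{S^\infty}$, and $\func{T}_\delta=\func{F}^*_\delta$. First we confirm the structural requirements. Clearly $S^\infty\subset\mathcal{L}^\infty$, and by definition $\|\mu\|_{S^\infty}=\|\phi_1\|_\theta+\|\mu\|_\infty\geq\|\mu\|_\infty$. Each $\func{F}^*_\delta$ maps $\mathcal{L}^\infty$ into itself by Proposition \ref{agorasim}. It maps $S^\infty$ into itself because, by Lemma \ref{transformula}, the new marginal density is $\operatorname{P}_{\sigma_\delta}\phi_1$, which lies in $\mathcal{F}_\theta(\Sigma_A^+)$ by (A1). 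The fixed points $\mu_\delta\in S^\infty$ are provided by Theorem \ref{probun} applied to each $F_\delta$, since each $F_\delta$ satisfies (G1).

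\textbf{(O1).} We combine Lemma \ref{thshgf} with Lemma \ref{UF2ass}. Lemma \ref{thshgf} gives the uniform bound $|\mu_\delta|_\theta\le B_u$. Lemma \ref{UF2ass} then yields $\|(\func{F}^*_0-\func{F}^*_\delta)\mu_\delta\|_\infty\le C_2R(\delta)$ with $C_2=2+B_u$. The requirement $R(\delta)\log\delta\to 0$ is exactly (U2). So (O1) holds with $C=C_2$.

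\textbf{(O4).} Iterating Proposition \ref{agorasim} gives $\|\func{F}^{*n}_\delta\nu\|_\infty\le\|\nu\|_\infty$ for every $n$ and every $\delta$. Hence $M_2=1$ works.

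\textbf{(O3).} For $\mu\in S^\infty$ with $\mu(\Sigma)=0$, Remark \ref{rem123} shows that $\mu\in\mathcal{V}$. Proposition \ref{5.8}, applied to $F_0$, then gives
\[
\|\func{F}^{*n}_0\mu\|_\infty\le D_2\beta_1^n\|\mu\|_{S^\infty},
\]
so we may take $\rho_2=\beta_1$ and $C_2=D_2$.

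\textbf{(O2)} is the only step that needs real work, and we expect it to be the main obstacle. The fibre part is immediate: each $\mu_\delta$ is a probability measure, so Lemma \ref{niceformulaac} gives $\|\mu_\delta|_\gamma\|_W=\phi_{1,\delta}(\underline{x})$. It therefore suffices to bound $\|\phi_{1,\delta}\|_\theta$ uniformly in $\delta$. We plan to do this in two steps.

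First, the Lipschitz part. Proposition \ref{ttty} together with Lemma \ref{thshgf} gives $|\phi_{1,\delta}|_\theta\le|\mu_\delta|_\theta\le B_u$.

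Second, the sup part. The density $\phi_{1,\delta}$ is nonnegative and has $m$-integral $1$. On a space of diameter at most $1/(1-\theta)$ in the metric $d_\theta$, this forces $|\phi_{1,\delta}|_\infty\le 1+B_u/(1-\theta)$. Indeed, $\phi_{1,\delta}$ attains a value at most $1$ somewhere, and it can grow by at most $B_u$ times the diameter from there.

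Alternatively, we can argue through the marginal fixed-point equation $\operatorname{P}_{\sigma_\delta}\phi_{1,\delta}=\phi_{1,\delta}$. Combined with the uniform Lasota--Yorke inequality (A1) and $|\phi_{1,\delta}|_1=1$, this gives $\|\phi_{1,\delta}\|_\theta\le B_3$ after letting $n\to\infty$, if we use $|\cdot|_\infty$ control. Either route gives a uniform $\func{Y}$. With this, all four conditions hold, and the lemma follows.
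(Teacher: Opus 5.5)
Your proposal is correct. For (O1), (O3) and (O4) it follows the paper. (O1) comes from feeding the uniform bound $|\mu_\delta|_\theta\le B_u$ of Lemma \ref{thshgf} into Lemma \ref{UF2ass}. (O3) comes from Proposition \ref{5.8}; your appeal to Remark \ref{rem123} makes explicit that zero-mass measures lie in $\mathcal{V}$, which the paper leaves implicit. (O4) comes from Proposition \ref{agorasim} with $M_2=1$.

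The genuine difference is in (O2). Both you and the paper use $\|\mu_\delta\|_\infty=|\phi_{1,\delta}|_\infty$ and $|\phi_{1,\delta}|_\theta\le|\mu_\delta|_\theta\le B_u$ from Proposition \ref{ttty} and Lemma \ref{thshgf}. The paper then controls $|\phi_{1,\delta}|_\infty$ by the constant $\func{J}$ of (U3), read as a uniform bound on the marginal densities. It also runs the estimate for $\|\func{F}_\delta^{*n}\mu_\delta\|_{S^\infty}$ through (A1), ending with $\func{Y}=B_3B_u+(B_3+1)\func{J}$.

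You instead bound the sup norm intrinsically. A nonnegative density with $\int\phi_{1,\delta}\,dm=1$ and Lipschitz constant at most $B_u$, on a space of $d_\theta$-diameter at most $1/(1-\theta)$, satisfies $|\phi_{1,\delta}|_\infty\le 1+B_u/(1-\theta)$. Because the Lipschitz constant here is only an essential supremum, make ``attains a value at most $1$'' precise via Fubini: pick a point of the positive-measure set $\{\phi_{1,\delta}\le 1\}$ at which the Lipschitz estimate holds against $m$-a.e.\ second point.

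What your route buys:
\begin{itemize}
\item It needs neither (U3) nor (A1).
\item It does not require identifying the marginal of $\mu_\delta$, a point the paper's text treats loosely.
\item It gives $\func{Y}=2+B_u+2B_u/(1-\theta)$, depending only on $B_u$ and $\theta$.
\item It makes clear that iterating with (A1) is superfluous, since $\func{F}_\delta^{*n}\mu_\delta=\mu_\delta$.
\end{itemize}
The paper's route instead reuses hypotheses already built into admissibility, and ties $\func{Y}$ to the perturbation data $(B_3,\func{J})$.

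Your ``alternative'' route does not close on its own. The right-hand side of (A1) contains $|u|_\infty$, not $|u|_1$. So letting $n\to\infty$ with $\operatorname{P}_{\sigma_\delta}\phi_{1,\delta}=\phi_{1,\delta}$ only gives $\|\phi_{1,\delta}\|_\theta\le B_3|\phi_{1,\delta}|_\infty$, which presupposes the very sup bound you are after. The normalization $|\phi_{1,\delta}|_1=1$ does not yield $\|\phi_{1,\delta}\|_\theta\le B_3$. That route works only after you supply $|\phi_{1,\delta}|_\infty$ from your first argument or from (U3), as the paper does.
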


\begin{proof}
    We must verify that the family $\{\func{F}^*_\delta\}_{\delta \in [0,1)}$ satisfies conditions (O1)--(O4). To establish (O2), we recall that $\mu_\delta$ is an $F_\delta$-invariant probability measure. By combining Proposition \ref{ttty}, Lemma \ref{thshgf}, and the fact that $\|\mu_\delta\|_\infty = |\phi_{1,\delta}|_\infty$ (since $\pi_1^* \mu_\delta = m$), we observe that the strong norm of the invariant measure satisfies:
    \begin{eqnarray*}
        \|\func{F}_\delta^{*n} \mu_\delta\|_{S^\infty} &=& |\mathcal{P}_{\sigma_\delta}^n(\phi_{1,\delta})|_\theta + \|\func{F}_\delta^{*n} \mu_\delta\|_\infty \\
        &\leq& B_3 \beta_3^n |\phi_{1,\delta}|_\theta + B_3 |\phi_{1,\delta}|_\infty + \|\mu_\delta\|_\infty \\
        &\leq& B_3 \beta_3^n |\mu_\delta|_\theta + (B_3 + 1) \func{J} \\
        &\leq& B_3 B_u + (B_3 + 1) \func{J},
    \end{eqnarray*}
    where we used the uniform bound $B_u$ from Lemma \ref{thshgf} and the bound $\func{J}$ for the marginal densities. Thus, (O2) holds with the constant $\func{Y} = B_3 B_u + (B_3 + 1) \func{J}$.

    Condition (O1) follows directly from the definition of an admissible perturbation and the uniform spatial regularity established in Lemma \ref{thshgf}. Finally, properties (O3) and (O4) are consequences of the exponential convergence to equilibrium (Proposition \ref{5.8}) and the uniform power-boundedness (Proposition \ref{agorasim}) applied to each perturbed map $F_\delta$, respectively.
\end{proof}

We are now in a position to state the main result of this paper, which provides a quantitative bound on the statistical stability of the system under deterministic perturbations.

\begin{athm}[Quantitative Stability for Deterministic Perturbations] \label{d}
    Let $\{F_\delta\}_{\delta \in [0,1)}$ be an admissible $R(\delta)$-perturbation, and let $\mu_0$ and $\mu_\delta$ be the unique invariant probability measures in $S^\infty$ for the maps $F$ and $F_\delta$, respectively. Then, there exist constants $D_2 < 0$ and $\delta_1 \in (0, \delta_0)$ such that, for all $\delta \in [0, \delta_1)$:
    \begin{equation}\label{stabll}
        \|\mu_\delta - \mu_0\|_\infty \leq D_2 R(\delta) \log \delta.
    \end{equation}
\end{athm}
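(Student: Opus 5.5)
The plan is to obtain Theorem \ref{d} as a direct application of the abstract Lemma \ref{dlogd}. Lemma \ref{rrr} tells us that the family of transfer operators $\{\func{F}^*_\delta\}_{\delta\in[0,1)}$ is an $R(\delta)$-family of operators. Its weak space is $(\mathcal{L}^\infty,\|\cdot\|_\infty)$ and its strong space is $(S^\infty,\|\cdot\|_{S^\infty})$. We take $\func{T}_\delta := \func{F}^*_\delta$ and let $\mu_\delta$ be the $F_\delta$-invariant probability measure in $S^\infty$ given by Theorem \ref{probun} applied to $F_\delta$. This is legitimate because each $F_\delta$ satisfies (G1) by the definition of an admissible perturbation.

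The one remaining hypothesis of Lemma \ref{dlogd} is that $\mu_0$ is the unique fixed point of $\func{T}_0$ in the weak space. Theorem \ref{probun} only gives uniqueness among probability measures in $S^\infty$, so I would justify the needed uniqueness as follows.

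\begin{itemize}
\item The inclusion $S^\infty\subset\mathcal{L}^\infty$ holds, and $\|\cdot\|_{S^\infty}\geq\|\cdot\|_\infty$ is immediate from the definition of the norm.
\item For uniqueness, take any other invariant probability $\nu\in S^\infty$. Then $\nu-\mu_0$ has zero total mass, so it lies in $\mathcal{V}$ by Remark \ref{rem123}.
\item Proposition \ref{5.8} then gives $\|\nu-\mu_0\|_\infty=\|\func{F^{*n}}(\nu-\mu_0)\|_\infty\le D_2\beta_1^n\|\nu-\mu_0\|_{S^\infty}\to0$.
\end{itemize}

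This is exactly the sense in which uniqueness is used in the proof of Lemma \ref{dlogd}. There, one compares $\mu_\delta-\mu_0\in\mathcal{V}_s$ through the telescoping identity
\begin{equation*}
\mu_\delta-\mu_0=\func{T}_0^N(\mu_\delta-\mu_0)+\sum_{j=0}^{N-1}\func{T}_0^{j}(\func{T}_\delta-\func{T}_0)\mu_\delta .
\end{equation*}

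The quantitative estimate then comes out as follows. I would recall, or briefly re-derive, the mechanism of Lemma \ref{dlogd} so that the constants are explicit.

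\begin{itemize}
\item The first term is bounded by (O3) and (O2): it is at most $\rho_2^N C_2\,2\func{Y}$.
\item Each summand is bounded by (O4) and (O1), with (O1) supplied by Lemma \ref{UF2ass} together with Lemma \ref{thshgf}. This gives at most $N M_2 C_2' R(\delta)$ for the whole sum, where $C_2'=2+B_u$.
\item Choosing $N=\lceil \log\delta/\log\rho_2\rceil$ makes $\rho_2^N\le\delta$.
\end{itemize}

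This yields $\|\mu_\delta-\mu_0\|_\infty\le C\delta+C'R(\delta)|\log\delta|$. The term $\delta$ must then be absorbed into $R(\delta)|\log\delta|$ for small $\delta$; this is the content of Lemma \ref{dlogd} as quoted from \cite{RRRSTAB} with $\zeta=1$. Since $\log\delta<0$, the sign convention puts the bound in the form $D_2R(\delta)\log\delta$ with $D_2<0$. We take $\delta_1<\delta_0$ so that the lemma applies and the admissibility conditions (U1)--(U3) hold simultaneously.

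The main obstacle is checking (O2)--(O4) carefully rather than the final computation. Condition (O3) is stated on $\mathcal{V}_s$ (zero-mass measures), whereas Proposition \ref{5.8} is stated on the larger space $\mathcal{V}$; Remark \ref{rem123} reconciles the two. Condition (O4) requires power-boundedness on signed measures, which Proposition \ref{agorasim} provides with $M_2=1$. Condition (O2) needs the uniform Lipschitz bound of Lemma \ref{thshgf} combined with Proposition \ref{ttty}, assumption (A1), and the bound $\func{J}$ from (U3).

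A subtle point is absorbing $\delta$ into $R(\delta)|\log\delta|$. If $R(\delta)$ decays faster than $\delta/|\log\delta|$ this absorption fails. In that case I would instead pick $N$ so that $\rho_2^N\le R(\delta)$, which gives a bound of the form $R(\delta)\log R(\delta)$. Alternatively, one can note that in this setting $R(\delta)\ge c\,\delta$ can be arranged, since enlarging $R$ preserves (U2).
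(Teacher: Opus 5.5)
Your proposal follows the paper's proof: Theorem \ref{d} is obtained by feeding the $R(\delta)$-family of Lemma \ref{rrr} into the abstract Lemma \ref{dlogd}. Your telescoping re-derivation, together with your checks of (O1)--(O4) via Lemmas \ref{UF2ass} and \ref{thshgf} and Propositions \ref{5.8} and \ref{agorasim}, is the mechanism behind that lemma. Your caveat is also well-founded: the telescoping bound gives order $R(\delta)|\log R(\delta)|$, which matches $R(\delta)|\log\delta|$ only when $|\log R(\delta)| = O(|\log\delta|)$ (for example, the linear case of Corollary \ref{htyttigui}); neither workaround recovers the literal form for faster-decaying $R$, so in that regime the statement rests, for you as for the paper, entirely on Lemma \ref{dlogd} as quoted.
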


\begin{proof}
    The result follows by applying the abstract stability Lemma \ref{dlogd} to the $R(\delta)$-family of transfer operators $\{\func{F}^*_\delta\}_{\delta \in [0,1)}$ identified in Lemma \ref{rrr}.
\end{proof}

\begin{remark}
    By definition, the weak and strong norms satisfy the inequality $\|\cdot\|_W \leq \|\cdot\|_\infty$. Consequently, assuming that the family $\{F_\delta\}_{\delta \in [0,1)}$ satisfies the conditions of Theorem \ref{d}, there exists a constant $D_2 < 0$ such that:
    \begin{equation*}
        \|\mu_\delta - \mu_0\|_W \leq D_2 R(\delta) \log \delta.
    \end{equation*}
    This estimate has immediate implications for physical observables. For any Lipschitz function $g: \Sigma \to \mathbb{R}$, we obtain the following bound on the variation of the expectation values:
    \begin{equation*}
        \left| \int g \, d\mu_\delta - \int g \, d\mu_0 \right| \leq D \|g\|_\theta R(\delta) \log \delta,
    \end{equation*}
    where $\|g\|_\theta = |g|_\infty + L_\Sigma(g)$ is the standard Lipschitz norm (see Equation \eqref{lnot}). This shows that for any such observable, the convergence $\int g \, d\mu_\delta \to \int g \, d\mu_0$ as $\delta \to 0$ occurs with a rate at least of order $R(\delta) \log \delta$.
\end{remark}

Several relevant classes of perturbations of $F$ allow for a linear choice of $R(\delta)$. In cases where $R(\delta)$ is of the form $R(\delta) = K_5 \delta$ for some constant $K_5 > 0$, we obtain the following specialized result.

\begin{corollary}[Quantitative Stability for Linear Perturbations] \label{htyttigui}
    Let $\{F_\delta\}_{\delta \in [0,1)}$ be an admissible $R(\delta)$-perturbation with $R(\delta) = K_5 \delta$. Let $\mu_\delta$ denote the unique $F_\delta$-invariant probability measure in $S^\infty$. Then, there exist constants $D_2 < 0$ and $\delta_1 \in (0, \delta_0)$ such that, for all $\delta \in [0, \delta_1)$, the following stability estimate holds:
    \begin{equation*}
        \|\mu_\delta - \mu_0\|_\infty \leq D_2 \delta \log \delta.
    \end{equation*}
\end{corollary}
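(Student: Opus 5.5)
The corollary is a direct specialization of Theorem \ref{d}, so the plan is to verify that the linear choice $R(\delta) = K_5\delta$ is an admissible rate function and then substitute it into \eqref{stabll}.

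\textbf{Step 1: admissibility of the rate.} Condition (U2) in the definition of an admissible $R(\delta)$-perturbation requires $\lim_{\delta\to 0^+} R(\delta)\log\delta = 0$. For $R(\delta) = K_5\delta$ this follows from the elementary limit $\delta\log\delta \to 0$ as $\delta\to 0^+$; one can use L'Hôpital on $\log\delta/\delta^{-1}$. The remaining hypotheses (U1), (U2.1), (U2.2), (U3), (A1) and (A2) are part of the assumption that $\{F_\delta\}$ is an admissible $R(\delta)$-perturbation with this $R$, so nothing further needs checking.

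\textbf{Step 2: invoke the abstract framework.} By Lemma \ref{rrr}, the family $\{\func{F}^*_\delta\}$ is an $R(\delta)$-family of operators with weak space $(\mathcal{L}^\infty,\|\cdot\|_\infty)$ and strong space $(S^\infty,\|\cdot\|_{S^\infty})$. Condition (O1) holds with $R(\delta)=K_5\delta$ and constant $C_2 = 2 + B_u$, by Lemma \ref{UF2ass} together with Lemma \ref{thshgf}. Theorem \ref{d} then yields constants $D_2'<0$ and $\delta_1\in(0,\delta_0)$ with
\[
\|\mu_\delta-\mu_0\|_\infty \le D_2' K_5\,\delta\log\delta \quad\text{for all } \delta\in[0,\delta_1).
\]

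\textbf{Step 3: absorb the constant.} Set $D_2 := D_2' K_5$. This is still negative since $K_5>0$, and with it the claimed estimate holds on the same interval $[0,\delta_1)$. The sign convention $D_2<0$ is consistent because $\log\delta<0$ on $(0,1)$, so the right-hand side is positive. At $\delta=0$ both sides vanish under the convention $0\log 0 = 0$.

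There is no genuine obstacle here. The only point needing care is confirming that the constant produced by Lemma \ref{dlogd} (through Theorem \ref{d}) depends on $R$ only multiplicatively, so that the factor $K_5$ can be pulled out cleanly. This is what allows the linear rate to be stated as $D_2\,\delta\log\delta$.
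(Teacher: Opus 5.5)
Your proposal is correct and matches how the paper obtains the corollary: it is Theorem \ref{d} specialized to $R(\delta)=K_5\delta$, with the factor $K_5>0$ absorbed into the negative constant. The caveat in your last paragraph is unnecessary, since Theorem \ref{d} already gives a fixed constant multiplying $R(\delta)\log\delta$ for the given family, so no finer dependence of that constant on $R$ needs to be checked.
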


\section{Exponential Decay of Correlations and Central Limit Theorem}\label{limit}

In this section, we establish the fundamental limit theorems for the dynamical system $(F, \mu_0)$. Specifically, we demonstrate that the system exhibits \textbf{exponential decay of correlations} for observables within the space $L^1(\mathcal{F}_0)$ (to be introduced below) and the Lipschitz space $\ho(\Sigma)$. Furthermore, we leverage these statistical properties to prove that $(F, \mu_0)$ satisfies the \textbf{Central Limit Theorem} for Lipschitz observables.

\subsection{Exponential Decay of Correlations over constant fiber functions}

Throughout this subsection, the map $F: \Sigma \to \Sigma$ is assumed to satisfy the conditions established in Section \ref{sec1}. We aim to prove the exponential decay of correlations for two classes of observables: functions that are constant along fibers and Lipschitz continuous functions. The following lemma provides the necessary regularity for the fiber-averaged observables, treating the base space as the subshift of finite type $\Sigma_A^+$.

\begin{lemma}\label{uiytirut}
    Let $\varphi \in \ho(\Sigma)$ be a Lipschitz function on the total space $\Sigma$. Then, there exists a disintegration $(\{\mu_{0,\gamma}\}_{\gamma \in \Sigma_A^+}, \phi_1)$ of the invariant measure $\mu_0$ such that the fiber-average function
    \begin{equation*}
    \gamma \longmapsto \int_K \varphi(\gamma, \cdot) \, d\mu_{0,\gamma}
    \end{equation*}
    is a Lipschitz function on the subshift of finite type $\Sigma_A^+$.
\end{lemma}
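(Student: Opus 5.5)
Fix the disintegration $\omega$ of $\mu_0$ provided by Theorem \ref{regg}. For it, $|\mu_0|^\omega_\theta \le C_1/(1-\theta)+\epsilon$; if the infimum is attained or we simply take a representative with finite constant, the argument is the same. Since $\mu_0$ is $F$-invariant, Lemma \ref{transformula} gives $\mathcal{P}_\sigma\phi_1=\phi_1$. Because $\mu_0\in S^\infty$ and the invariant density is unique, we get $\phi_1\equiv 1$. Hence $\mu_0|_\gamma=\pi_{2,\gamma}^*\mu_{0,\gamma}$ is a probability measure on $K$. The fiber average is then
\[
\Phi(\underline{x}):=\int_K \varphi(\underline{x},y)\,d\mu_0|_{\gamma_{\underline{x}}}(y).
\]
It is bounded by $|\varphi|_\infty$. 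The plan is to split the difference $\Phi(\underline{x}^1)-\Phi(\underline{x}^2)$ into two terms, one coming from the variation of the integrand and one from the variation of the measure.

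Write $\varphi_i:=\varphi(\underline{x}^i,\cdot)$ and $\nu_i:=\mu_0|_{\gamma_i}$. Then
\[
|\Phi(\underline{x}^1)-\Phi(\underline{x}^2)|\le \Big|\int_K(\varphi_1-\varphi_2)\,d\nu_1\Big| + \Big|\int_K \varphi_2\,d\nu_1-\int_K\varphi_2\,d\nu_2\Big|.
\]

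For the first term, Lipschitz continuity of $\varphi$ on $\Sigma$ with the product (max or sum) metric gives $|\varphi_1(y)-\varphi_2(y)|\le L_\Sigma(\varphi)\,d_\theta(\underline{x}^1,\underline{x}^2)$. Since $\nu_1$ is a probability, the first term is at most $L_\Sigma(\varphi)\,d_\theta(\underline{x}^1,\underline{x}^2)$.

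For the second term, normalize $\varphi_2$ by $c:=\max\{|\varphi|_\infty, L_\Sigma(\varphi)\}$ when $c>0$. Then $\varphi_2/c$ is an admissible test function in Definition \ref{wasserstein}, because $L_K(\varphi_2)\le L_\Sigma(\varphi)$ and $\operatorname{diam}K=1$. This yields
\[
\Big|\int\varphi_2\,d\nu_1-\int\varphi_2\,d\nu_2\Big|\le c\,\|\nu_1-\nu_2\|_W\le c\,|\mu_0|_\theta^\omega\, d_\theta(\underline{x}^1,\underline{x}^2)
\]
for $m$-a.e. pair. Combining the two terms,
\[
|\Phi|_\theta\le L_\Sigma(\varphi)+\|\varphi\|_{\ho}\,|\mu_0|^\omega_\theta \le \|\varphi\|_{\ho}\Big(1+\tfrac{C_1}{1-\theta}+\epsilon\Big).
\]

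The main obstacle is that this bound holds only off an $m$-null set, that is, as an essential supremum, whereas $\mathcal{F}_\theta$ requires a genuine Lipschitz function. To handle this, I would use that $\Phi$ is uniformly Lipschitz on a full-measure set $I\subset\Sigma_A^+$. Since $m$ is a Markov measure with full support, $I$ is dense. Therefore $\Phi|_I$ extends uniquely to a Lipschitz function $\tilde\Phi$ on all of $\Sigma_A^+$ with the same constant.

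I would then redefine the disintegration on the null set $\Sigma_A^+\setminus I$. For $\underline{x}\notin I$, I would take $\mu_{0,\gamma_{\underline{x}}}$ to be a Wasserstein limit of $\mu_0|_{\gamma_{\underline{x}_n}}$ along $\underline{x}_n\to\underline{x}$ with $\underline{x}_n\in I$. This limit exists by completeness, since these paths are Cauchy in $\|\cdot\|_W$ and probability measures on the compact space $K$ form a closed set. The modified family is still a disintegration by Lemma \ref{kv}, and it realizes $\Phi=\tilde\Phi$ everywhere, which gives the statement.
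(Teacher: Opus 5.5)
Your argument is correct and is essentially the paper's proof. You use the same triangle-inequality split: one term comes from the variation of the integrand and is bounded by $L_\Sigma(\varphi)\,d_\theta(\underline{x}^1,\underline{x}^2)$, and the other comes from the variation of the fiber measure and is bounded by $\max\{|\varphi|_\infty, L_\Sigma(\varphi)\}\,|\mu_0|^\omega_\theta\,d_\theta(\underline{x}^1,\underline{x}^2)$, using $\phi_1\equiv 1$ and a disintegration with finite Lipschitz constant from Theorem \ref{regg}; you only swap which fiber is held fixed.

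Your final step, which redefines the disintegration on an $m$-null set so that the essential-supremum bound becomes a genuine Lipschitz bound, goes beyond the paper, which tacitly treats the essential supremum as a supremum. That step is sound, with two corrections to its justification. First, the limit measure exists because the probability measures on $K$ form a compact set under $W_1^0$, not because $\|\cdot\|_W$ is complete on signed measures, which fails in general. Second, the modified family is a disintegration directly from the definition (it agrees a.e.\ with the original and is continuous in $\underline{x}$), not by the uniqueness statement Lemma \ref{kv}.
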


\begin{proof}
    By Theorem \ref{regg}, the Lipschitz constant of $\mu_0$ is finite. Thus, there exists a disintegration $\omega = (\{ \mu_{0,\gamma} \}_{\gamma \in \Sigma_A^+}, \phi_1)$ of $\mu_0$ such that $|\Gamma_{\mu_0}^{\omega}|_\theta < +\infty$. Since $\mu_0$ is the invariant probability measure, its marginal density $\phi_1$ satisfies $\phi_1 \equiv 1$ $m$-a.e.
    
    Now, consider a Lipschitz function $\varphi \in \ho(\Sigma)$. For each $\gamma \in \Sigma_A^+$, the restriction $\varphi(\gamma, \cdot): K \to \mathbb{R}$ is a Lipschitz function on the fiber $K$ with a Lipschitz constant $L_K(\varphi_\gamma) \leq L_\Sigma(\varphi)$, where $L_\Sigma(\varphi)$ is the global Lipschitz constant.
    
    To establish the Lipschitz regularity on the base subshift, take $\gamma_1, \gamma_2 \in \Sigma_A^+$. We decompose the difference of the fiber averages as follows:
    \begin{align*}
        \left| \int \varphi(\gamma_1, \cdot) \, d\mu_{0,\gamma_1} - \int \varphi(\gamma_2, \cdot) \, d\mu_{0,\gamma_2} \right| 
        &\leq \left| \int \varphi(\gamma_1, \cdot) \, d\mu_{0,\gamma_1} - \int \varphi(\gamma_1, \cdot) \, d\mu_{0,\gamma_2} \right| \\
        &\quad + \left| \int \varphi(\gamma_1, \cdot) \, d\mu_{0,\gamma_2} - \int \varphi(\gamma_2, \cdot) \, d\mu_{0,\gamma_2} \right|.
    \end{align*}
    
    Applying the definition of the Wasserstein metric to the first term and the Lipschitz property of $\varphi$ on $\Sigma$ to the second, we obtain:
    \begin{align*}
        \text{Term 1} &\leq \max\{L_\Sigma(\varphi), \|\varphi\|_\infty\} \|\mu_{0,\gamma_1} - \mu_{0,\gamma_2}\|_W \\
        &\leq \max\{L_\Sigma(\varphi), \|\varphi\|_\infty\} |\Gamma_{\mu_0}^{\omega}|_\theta \, d_\theta(\gamma_1, \gamma_2). \\
        \text{Term 2} &\leq \int_K | \varphi(\gamma_1, \cdot) - \varphi(\gamma_2, \cdot) | \, d\mu_{0,\gamma_2} \\
        &\leq L_\Sigma(\varphi) \, d_\theta(\gamma_1, \gamma_2) \int_K 1 \, d\mu_{0,\gamma_2} = L_\Sigma(\varphi) \, d_\theta(\gamma_1, \gamma_2).
    \end{align*}
    
    Combining these estimates, it follows that the fiber-average function is Lipschitz on $\Sigma_A^+$, completing the proof.
\end{proof}

\begin{athm}\label{çljghhjçh}
    Suppose that the skew-product $F: \Sigma \to \Sigma$, given by $F(\gamma, y) = (\sigma(\gamma), G(\gamma, y))$, satisfies hypotheses (G1) and (G2). Let $\mu_0$ be the unique $F$-invariant probability measure in $S^\infty$. Then, there exists a constant $0 < \tau_2 < 1$ such that, for every \textbf{constant fiber function} $\psi: \Sigma \to \mathbb{R}$ (i.e., $\psi(\gamma, y) = \psi(\gamma)$) satisfying $\psi \in L^1(m)$, and for every Lipschitz observable $\varphi \in \ho(\Sigma)$, we have:
    \begin{equation*}
    \left| \int_\Sigma (\psi \circ F^n) \, \varphi \, d\mu_0 - \int_\Sigma \psi \, d\mu_0 \int_\Sigma \varphi \, d\mu_0 \right| \leq \tau_2^n D(\psi, \varphi), \quad \forall \, n \geq 1,
    \end{equation*}
    where $D(\psi, \varphi) > 0$ is a constant depending on the norms of $\psi$ and $\varphi$.
\end{athm}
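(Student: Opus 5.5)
The plan is to reduce the correlation on $\Sigma$ to a correlation on the base $(\Sigma_A^+,\sigma,m)$, and then invoke Proposition \ref{iutryrt}. We use the disintegration $(\{\mu_{0,\gamma}\}_\gamma,\phi_1)$ of $\mu_0$ provided by Lemma \ref{uiytirut}, with $\phi_1\equiv 1$ $m$-a.e. This gives $\pi_1^*\mu_0=m$, a fact that follows from Lemma \ref{transformula} and the uniqueness in Theorem \ref{probun}. Define the fiber average
\begin{equation*}
\bar\varphi(\underline{x}) := \int_K \varphi(\underline{x},y)\, d\mu_{0,\gamma_{\underline{x}}}(y).
\end{equation*}
By Lemma \ref{uiytirut}, $\bar\varphi\in\mathcal{F}_\theta(\Sigma_A^+)$. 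Moreover $|\bar\varphi|_\infty\le|\varphi|_\infty$, and
\begin{equation*}
|\bar\varphi|_\theta\le \max\{L_\Sigma(\varphi),|\varphi|_\infty\}\,|\mu_0|_\theta+L_\Sigma(\varphi),
\end{equation*}
which is finite by Theorem \ref{regg}.

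First we rewrite each term. Since $\psi$ is constant on fibers and $\pi_1\circ F=\sigma\circ\pi_1$, we have $\psi\circ F^n(\underline{x},y)=\psi(\sigma^n\underline{x})$, which again is constant on fibers. Part (c) of Theorem \ref{rok}, applied to the integrable function $(\psi\circ F^n)\varphi$ (integrability holds since $\varphi$ is bounded and $\psi\circ\sigma^n\in L^1(m)$ by $\sigma$-invariance of $m$), then gives
\begin{equation*}
\int_\Sigma (\psi\circ F^n)\varphi\, d\mu_0=\int_{\Sigma_A^+}\psi(\sigma^n\underline{x})\int_K\varphi(\underline{x},y)\,d\mu_{0,\gamma_{\underline{x}}}(y)\,dm(\underline{x})=\int (\psi\circ\sigma^n)\,\bar\varphi\,dm.
\end{equation*}
The same computation yields $\int_\Sigma\psi\,d\mu_0=\int\psi\,dm$ and $\int_\Sigma\varphi\,d\mu_0=\int\bar\varphi\,dm$. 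To justify these for a general $\psi\in L^1(m)$, I would first prove them for simple functions and then pass to the limit by monotone and dominated convergence.

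Second, the left-hand side of the theorem now equals
\begin{equation*}
\left|\int(\psi\circ\sigma^n)\bar\varphi\,dm-\int\psi\,dm\int\bar\varphi\,dm\right|.
\end{equation*}
Proposition \ref{iutryrt}, applied with $\psi\in L^1_m$ and $\bar\varphi\in\mathcal{F}_\theta$, bounds this by $\tau_2^n D(\psi,\bar\varphi)$. Setting $D(\psi,\varphi):=D(\psi,\bar\varphi)$ completes the argument, and this constant depends only on $\psi$, $\varphi$ and $|\mu_0|_\theta$.

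The main obstacle is the bookkeeping in the disintegration step. Lemma \ref{uiytirut} produces a specific good representative of the disintegration, and the identity above has to hold for that representative. This is true because every disintegration satisfies part (c) of Theorem \ref{rok}, and any two differ only on an $m$-null set by Lemma \ref{kv}. One must also check $\phi_1\equiv1$, i.e. $\mathcal{P}_\sigma 1=1$ for the Markov measure, so that the quotient measure is exactly $m$. If one wants an explicit dependence on norms in $D(\psi,\varphi)$, it should come from the spectral-gap proof of Proposition \ref{iutryrt}, which gives something of the form $D|\psi|_{L^1}\|\bar\varphi\|_\theta$ via $P_\sigma^n$ acting on $\bar\varphi-\int\bar\varphi\,dm$.
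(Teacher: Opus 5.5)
Your proposal is correct and follows the paper's proof essentially step by step. You define the fiber average $\bar\varphi$ (the paper's $s$), obtain its Lipschitz regularity from Lemma \ref{uiytirut}, use the disintegration with $\phi_1\equiv 1$ and $\psi\circ F^n=\psi\circ\sigma^n$ to rewrite the correlation as a base correlation for $(\sigma,m)$, and conclude with Proposition \ref{iutryrt}, while your extra bookkeeping (integrability of $(\psi\circ F^n)\varphi$, extending Rokhlin's formula from sets to functions, and independence of the chosen disintegration via Lemma \ref{kv}) just makes explicit what the paper leaves implicit.
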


\begin{proof}
    Let $\psi: \Sigma \to \mathbb{R}$ be a function that depends only on the base coordinate, $\psi(\gamma, y) = \psi(\gamma)$, with $\psi \in L^1(m)$. Let $\varphi \in \ho(\Sigma)$ be a Lipschitz continuous observable on the total space.
    
    Since $\psi$ is constant along fibers, the composition satisfies $\psi \circ F^n(\gamma, y) = \psi \circ \sigma^n(\gamma)$ for all $n \geq 1$. Moreover, the invariance of the marginal measure $m$ implies:
    \begin{equation*}
    \int_\Sigma \psi(\gamma, y) \, d\mu_0(\gamma, y) = \int_{\Sigma_A^+} \psi(\gamma) \, dm(\gamma).
    \end{equation*}
    
    We now consider the fiber-average function $s: \Sigma_A^+ \to \mathbb{R}$ defined by:
    \begin{equation*}
    s(\gamma) := \int_K \varphi(\gamma, \cdot) \, d\mu_{0, \gamma},
    \end{equation*}
    where $\{\mu_{0, \gamma}\}_{\gamma}$ is the disintegration of $\mu_0$ provided by Lemma \ref{uiytirut}. By that same lemma, $s$ is a Lipschitz function on the base subshift $\Sigma_A^+$.
    
    Using the disintegration of $\mu_0$ and the fact that $\phi_1 \equiv 1$, we can rewrite the correlation integral as:
    \begin{align*}
        \left| \int_\Sigma (\psi \circ F^n) \, \varphi \, d\mu_0 - \int_\Sigma \psi \, d\mu_0 \int_\Sigma \varphi \, d\mu_0 \right| 
        &= \left| \int_{\Sigma_A^+} (\psi \circ \sigma^n)(\gamma) \left[ \int_K \varphi(\gamma, \cdot) \, d\mu_{0, \gamma} \right] dm - \int \psi \, dm \int s \, dm \right| \\
        &= \left| \int_{\Sigma_A^+} (\psi \circ \sigma^n) \, s \, dm - \int_{\Sigma_A^+} \psi \, dm \int_{\Sigma_A^+} s \, dm \right|.
    \end{align*}
    
    Since $m$ is a Markov measure and $s$ is Lipschitz on the subshift $\Sigma_A^+$, the correlation in the last line decays exponentially. This follows from the spectral gap of the transfer operator $\func{P}$ associated with $(\sigma, m)$ (see Proposition \ref{iutryrt}). Therefore, there exists $0 < \tau_2 < 1$ such that:
    \begin{equation*}
    \left| \int_{\Sigma_A^+} (\psi \circ \sigma^n) \, s \, dm - \int_{\Sigma_A^+} \psi \, dm \int_{\Sigma_A^+} s \, dm \right| \leq \tau_2^n D(\psi, \varphi),
    \end{equation*}
    which completes the proof.
\end{proof}

\subsection{Exponential Decay of Correlations over $L^1(\mathcal{F}_0)$}\label{f0}

In this subsection, we introduce the space $L^1(\mathcal{F}_0)$, which serves as a crucial domain for establishing the exponential decay of correlations. This structural result will subsequently facilitate the proof of the Central Limit Theorem for Lipschitz observables.

Let $\mathcal{B} = \mathcal{B}(\Sigma_A^+)$ denote the Borel $\sigma$-algebra of the base subshift $\Sigma_A^+$. Let $\mathcal{G}$ be a countable generator of $\mathcal{B}$, and let $\mathcal{A}$ be the algebra generated by $\mathcal{G}$. We define the collection of cylinders:
\begin{equation*}
\mathcal{A} \times K := \{ A \times K \subset \Sigma : A \in \mathcal{A} \}.
\end{equation*}
It is clear that $\mathcal{A} \times K$ forms an algebra of subsets of the total space $\Sigma$. We denote by $\mathcal{F}_0$ the $\sigma$-algebra generated by $\mathcal{A} \times K$. By construction, $\mathcal{F}_0$ is a sub-$\sigma$-algebra of the Borel $\sigma$-algebra on which the invariant measure $\mu_0$ is defined.

The following proposition characterizes the geometric structure of the sets in $\mathcal{F}_0$, showing that they are composed of entire stable leaves (fibers).

\begin{proposition}\label{nvbvdjkf}
    Let $\gamma \in \mathcal{F}^s$ be a stable leaf and $A \in \mathcal{F}_0$ be a measurable set. If $\gamma \cap A \neq \emptyset$, then $\gamma \subset A$.
\end{proposition}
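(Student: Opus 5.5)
We will show that every $A \in \mathcal{F}_0$ has the form $A = B \times K$ for some Borel set $B \subset \Sigma_A^+$; the claim then follows at once. If $\gamma = \gamma_{\underline{x}} = \{\underline{x}\} \times K$ meets $B \times K$, then $\underline{x} \in B$, so $\gamma \subset B \times K = A$.

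To obtain this form we use a good-sets argument. Define
\begin{equation*}
\mathcal{C} := \{ B \times K : B \in \mathcal{B}(\Sigma_A^+) \} = \{ \pi_1^{-1}(B) : B \in \mathcal{B}(\Sigma_A^+) \}.
\end{equation*}
We check that $\mathcal{C}$ is a $\sigma$-algebra on $\Sigma$. Preimages under $\pi_1$ commute with complements and countable unions, since $\Sigma \setminus \pi_1^{-1}(B) = \pi_1^{-1}(\Sigma_A^+ \setminus B)$ and $\bigcup_n \pi_1^{-1}(B_n) = \pi_1^{-1}(\bigcup_n B_n)$. In addition, $\Sigma = \pi_1^{-1}(\Sigma_A^+) \in \mathcal{C}$. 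Next, $\mathcal{A} \subset \mathcal{B}(\Sigma_A^+)$, because $\mathcal{A}$ is the algebra generated by the generator $\mathcal{G} \subset \mathcal{B}$. Hence $\mathcal{A} \times K \subset \mathcal{C}$. Since $\mathcal{F}_0$ is the smallest $\sigma$-algebra containing $\mathcal{A} \times K$, we get $\mathcal{F}_0 \subset \mathcal{C}$. So every $A \in \mathcal{F}_0$ equals $\pi_1^{-1}(B)$ for some Borel $B$.

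In fact $\mathcal{F}_0 = \pi_1^{-1}(\sigma(\mathcal{A})) = \pi_1^{-1}(\mathcal{B})$, but the inclusion $\mathcal{F}_0 \subset \mathcal{C}$ is all we need.

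There is no serious obstacle here. The one point requiring care is not to attempt an explicit description of the sets generated from $\mathcal{A} \times K$ by countable operations, which could involve transfinite iteration. Passing to the $\sigma$-algebra $\mathcal{C}$ of $\pi_1$-saturated sets avoids this entirely.
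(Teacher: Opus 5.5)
Your proof is correct. It uses the same good-sets principle as the paper, but with a different class and a different closure mechanism.

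The paper fixes a leaf $\gamma$ and takes $\mathcal{C}_\gamma=\{A\in\mathcal{F}_0 : A\cap\gamma\neq\emptyset \Rightarrow \gamma\subset A\}$. It checks that this is a monotone class containing the algebra $\mathcal{A}\times K$, and then invokes the Monotone Class Theorem. You instead take the leaf-independent class $\pi_1^{-1}(\mathcal{B})$ and verify directly that it is a $\sigma$-algebra, since preimages commute with complements and countable unions.

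Your route uses less. It needs no Monotone Class Theorem, and nothing about $\mathcal{A}\times K$ beyond $\mathcal{A}\subset\mathcal{B}$, not even that it is an algebra. It also yields more: the identification $\mathcal{F}_0=\pi_1^{-1}(\mathcal{B})$, so every set in $\mathcal{F}_0$ is a Borel cylinder $B\times K$, not merely saturated leaf by leaf.

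That stronger form is what the later proof of Theorem \ref{athmc} really needs. It shows immediately that an $\mathcal{F}_0$-measurable $\psi$ factors as $\tilde\psi\circ\pi_1$ with $\tilde\psi$ Borel on $\Sigma_A^+$. The leafwise statement alone only gives constancy of $\psi$ on each leaf.

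Incidentally, the paper's class $\mathcal{C}_\gamma$ is itself closed under complements and countable unions. So its monotone class step could also have been replaced by a direct $\sigma$-algebra check.
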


\begin{proof}
    For a fixed stable leaf $\gamma \in \mathcal{F}^s$, let $\mathcal{C}$ be the collection of sets defined by:
    \begin{equation*}
    \mathcal{C} := \{ A \in \mathcal{F}_0 : A \cap \gamma \neq \emptyset \implies \gamma \subset A \}.
    \end{equation*}
    It is straightforward to verify that $\mathcal{C}$ is a monotone class. Furthermore, $\mathcal{C}$ contains the generating algebra $\mathcal{A} \times K$, since any set $A \times K$ either contains the entire fiber $\{x\} \times K$ or is disjoint from it. By the Monotone Class Theorem, it follows that $\mathcal{C} \supset \mathcal{F}_0$. Since this holds for an arbitrary leaf $\gamma$, the proof is complete.
\end{proof}

\begin{athm}\label{athmc}
    Suppose that $F: \Sigma \to \Sigma$ satisfies hypotheses (G1) and (G2), and let $\mu_0$ be the unique $F$-invariant probability measure in $S^\infty$. Then, there exists a constant $0 < \tau_3 < 1$ such that, for every $\mathcal{F}_0$-measurable function $\psi \in L^1_{\mu_0}(\mathcal{F}_0)$ and every Lipschitz observable $\varphi \in \ho(\Sigma)$, we have:
    \begin{equation*}
    \left| \int_\Sigma (\psi \circ F^n) \, \varphi \, d\mu_0 - \int_\Sigma \psi \, d\mu_0 \int_\Sigma \varphi \, d\mu_0 \right| \leq \tau_3^n D(\psi, \varphi), \quad \forall \, n \geq 1,
    \end{equation*}
    where $D(\psi, \varphi) > 0$ is a constant depending on $\psi$ and $\varphi$.
\end{athm}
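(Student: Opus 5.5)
The plan is to reduce Theorem \ref{athmc} to Theorem \ref{çljghhjçh}. The key observation is that every $\mathcal{F}_0$-measurable function is a constant fiber function.

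\textbf{Step 1: $\mathcal{F}_0$-measurable functions are constant on fibers.} By Proposition \ref{nvbvdjkf}, every $A\in\mathcal{F}_0$ is a union of whole leaves $\gamma_{\underline{x}}=\{\underline{x}\}\times K$. So for $\psi$ $\mathcal{F}_0$-measurable and any $t\in\mathbb{R}$, the level set $\{\psi>t\}$ is a union of leaves. Hence $\psi(\underline{x},y_1)=\psi(\underline{x},y_2)$ for all $y_1,y_2\in K$: if the two values differed, some level set $\{\psi>t\}$ with $t$ strictly between them would meet $\gamma_{\underline{x}}$ without containing it. Therefore $\psi(\underline{x},y)=\tilde\psi(\underline{x})$ for a function $\tilde\psi:\Sigma_A^+\to\mathbb{R}$.

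\textbf{Step 2: measurability and integrability of $\tilde\psi$.} The sets of $\mathcal{F}_0$ are exactly $B\times K$ with $B\in\mathcal{B}$. Indeed, $\{B\times K: B\in\mathcal{B}\}$ is a $\sigma$-algebra containing $\mathcal{A}\times K$, and it is generated by it because $\mathcal{G}$ generates $\mathcal{B}$. Since $\{\tilde\psi>t\}\times K=\{\psi>t\}$, it follows that $\tilde\psi$ is Borel on $\Sigma_A^+$. Because $\pi_1^*\mu_0=m$ (the marginal density of $\mu_0$ is $\phi_1\equiv1$), we get
\[
\int_\Sigma|\psi|\,d\mu_0=\int_{\Sigma_A^+}|\tilde\psi|\,dm .
\]
Thus $\psi\in L^1_{\mu_0}(\mathcal{F}_0)$ corresponds exactly to $\tilde\psi\in L^1(m)$. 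If $\psi$ is only an $L^1$ class, pick an $\mathcal{F}_0$-measurable representative. Changing it on a $\mu_0$-null set in $\mathcal{F}_0$ changes $\tilde\psi$ only on an $m$-null set, so no integral in the statement is affected.

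\textbf{Step 3: apply the fiber-constant case.} Now $\psi$ is a constant fiber function with $\psi\in L^1(m)$. Theorem \ref{çljghhjçh} therefore applies directly with $\tau_3:=\tau_2$ and the same constant $D(\psi,\varphi)$. Unwinding that proof, the correlation equals
\[
\int (\tilde\psi\circ\sigma^n)\,s\,dm-\int\tilde\psi\,dm\int s\,dm,
\]
where $s$ is the fiber average from Lemma \ref{uiytirut}. This quantity is controlled by Proposition \ref{iutryrt}.

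\textbf{Main obstacle.} The only delicate point is Step 1. Proposition \ref{nvbvdjkf} is stated for sets, and it has to be transferred to functions. One must also make sure that passing to a $\mu_0$-representative does not break fiber-constancy. The level-set argument together with the identification $\mathcal{F}_0=\{B\times K: B\in\mathcal{B}\}$ handles both issues.
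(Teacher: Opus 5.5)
Your proposal is correct and follows the paper's own route: Proposition \ref{nvbvdjkf} forces every $\mathcal{F}_0$-measurable $\psi$ to be constant on leaves, and the statement is then the constant-fiber-function theorem with $\tau_3=\tau_2$ (the paper works with the preimages $\psi^{-1}(x)$, you with the level sets $\{\psi>t\}$). Your identification $\mathcal{F}_0=\{B\times K: B\in\mathcal{B}\}$, together with the identity $\int|\psi|\,d\mu_0=\int|\tilde\psi|\,dm$ coming from $\pi_1^*\mu_0=m$, handles the measurability and $L^1(m)$ step more cleanly than the paper, which only asserts it from $\phi_1\equiv 1$.
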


\begin{proof}
    The core of the proof lies in showing that any $\mathcal{F}_0$-measurable function $\psi$ is constant along the stable fibers $\gamma \in \mathcal{F}^s$. Let $\psi: \Sigma \to \mathbb{R}$ be such a function. For any $x$ in the range of $\psi$, the pre-image $\psi^{-1}(x)$ is an $\mathcal{F}_0$-measurable set. 
    
    If $\psi^{-1}(x) \neq \emptyset$, there exists a point $z \in \Sigma$ such that $\psi(z) = x$. Let $\gamma_z \in \mathcal{F}^s$ be the stable leaf containing $z$. Since $\gamma_z \cap \psi^{-1}(x) \neq \emptyset$ and $\psi^{-1}(x) \in \mathcal{F}_0$, Proposition \ref{nvbvdjkf} implies that the entire leaf is contained in the pre-image: $\gamma_z \subset \psi^{-1}(x)$. Thus, $\psi$ is constant on each stable leaf $\gamma$.
    
    Furthermore, since $\mu_0$ is a skew-product measure with marginal $m$ on the base $\Sigma_A^+$ and $\phi_1 \equiv 1$, the condition $\psi \in L^1(\mu_0)$ implies that the marginal restriction $\psi(\cdot, y)$ belongs to $L^1(m)$ for $m$-almost every $y \in K$. 
    
    As $\psi$ satisfies the criteria of being a constant fiber function and is integrable, the result follows immediately from the application of Theorem \ref{çljghhjçh}.
\end{proof}

\subsection{Gordin's Theorem}

To establish the Central Limit Theorem for our skew-product system, we employ a classical result from ergodic theory: Gordin's Theorem. This approach approximates the ergodic sums by a martingale difference sequence, provided that the observables satisfy certain summability conditions on their conditional expectations. For a comprehensive proof and further details, we refer the reader to \cite{Stoc}.

Let $(\Sigma, \mathcal{F}, \mu)$ be a probability space. Let $F: \Sigma \to \Sigma$ be a measurable map and $\mathcal{F}_0 \subset \mathcal{F}$ be a sub-$\sigma$-algebra such that the sequence of $\sigma$-algebras defined by $\mathcal{F}_n := F^{-n}(\mathcal{F}_0)$ for $n \in \mathbb{N}$ is non-increasing, i.e., $\mathcal{F}_{n+1} \subset \mathcal{F}_n$. 

A function $\xi: \Sigma \to \mathbb{R}$ is $\mathcal{F}_n$-measurable if and only if there exists an $\mathcal{F}_0$-measurable function $\xi_0$ such that $\xi = \xi_0 \circ F^n$. We denote the space of square-integrable $\mathcal{F}_n$-measurable functions by:
\begin{equation*}
L^2(\mathcal{F}_n) := \{ \xi \in L^2(\mu) : \xi \text{ is } \mathcal{F}_n\text{-measurable} \}.
\end{equation*}
Note that the non-increasing property of the $\sigma$-algebras implies a nested structure of Hilbert spaces: $L^2(\mathcal{F}_{n+1}) \subset L^2(\mathcal{F}_n)$ for all $n \geq 0$. Given any $\phi \in L^2(\mu)$, we denote by $\mathbb{E}(\phi \mid \mathcal{F}_n)$ the $L^2$-orthogonal projection of $\phi$ onto $L^2(\mathcal{F}_n)$.

\begin{theorem}[Gordin] \label{TeoremaDeGordin}
    Let $(\Sigma, \mathcal{F}, \mu)$ be a probability space, $F: \Sigma \to \Sigma$ a measurable map, and $\mu$ an $F$-invariant ergodic probability measure. Let $\mathcal{F}_0 \subset \mathcal{F}$ satisfy the condition that $\mathcal{F}_n := F^{-n}(\mathcal{F}_0)$ is a non-increasing family of $\sigma$-algebras. Let $\phi \in L^2(\mu)$ be an observable such that $\int \phi \, d\mu = 0$. Define the asymptotic variance:
    \begin{equation*}
    \sigma_\phi^2 := \int \phi^2 \, d\mu + 2 \sum_{j=1}^{\infty} \int \phi \cdot (\phi \circ F^j) \, d\mu.
    \end{equation*}
    If the sequence of conditional expectations is summable in $L^2$, i.e.,
    \begin{equation*}
    \sum_{n=0}^\infty \| \mathbb{E}(\phi \mid \mathcal{F}_n) \|_2 < \infty,
    \end{equation*}
    then the series defining $\sigma_\phi^2$ converges absolutely. Furthermore, $\sigma_\phi = 0$ if and only if $\phi$ is a coboundary ($\phi = u \circ F - u$ for some $u \in L^2(\mu)$). If $\sigma_\phi > 0$, then for any Borel set $A \subset \mathbb{R}$ whose boundary has zero Lebesgue measure, we have:
    \begin{equation*}
    \lim_{n \to \infty} \mu \left( x \in \Sigma : \frac{1}{\sqrt{n}} \sum_{j=0}^{n-1} \phi(F^j(x)) \in A \right) = \frac{1}{\sigma_\phi \sqrt{2\pi}} \int_A e^{-\frac{t^2}{2\sigma_\phi^2}} \, dt.
    \end{equation*}
\end{theorem}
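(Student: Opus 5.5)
The plan is the classical martingale (Gordin) approximation, written with the Koopman operator $U\xi := \xi\circ F$ and its $L^2(\mu)$-adjoint $P$, the transfer operator of $(F,\mu)$. Since $\mu$ is $F$-invariant, $U$ is an isometry of $L^2(\mu)$ and $PU=\mathrm{Id}$. The basic identity I will establish first is
$$\mathbb{E}(\xi\mid\mathcal{F}_n)=U^nP^n\xi \quad\text{for } \xi\in L^2(\mathcal{F}_0).$$
It follows from checking the defining property of conditional expectation against test functions $\eta_0\circ F^n$, with $\eta_0$ being $\mathcal{F}_0$-measurable, and using the duality $\int \xi\,(\eta_0\circ F^n)\,d\mu=\int P^n\xi\,\eta_0\,d\mu$. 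Consequently $\|\mathbb{E}(\phi\mid\mathcal{F}_n)\|_2=\|P^n\phi\|_2$.

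Here I will work with $\phi$ that is $\mathcal{F}_0$-measurable, which is the setting in which the $n=0$ term of the hypothesis carries all of $\phi$. A general $\phi\in L^2(\mu)$ would additionally require controlling $\phi-\mathbb{E}(\phi\mid\mathcal{F}_0)$ through the complementary, increasing-filtration half of Gordin's condition. I will flag this as a standing assumption to be verified in the application.

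\textbf{Construction of the martingale part.} The summability hypothesis makes
$$u:=\sum_{n\ge1}P^n\phi$$
convergent in $L^2(\mu)$. Set $\psi:=\phi+u-Uu$. A telescoping computation using $PU=\mathrm{Id}$ gives $P\psi=P\phi+Pu-u=0$. Hence $\mathbb{E}(\psi\mid\mathcal{F}_1)=UP\psi=0$, so $(\psi\circ F^j)_{j\ge0}$ is a stationary sequence of reverse martingale differences with respect to the filtration $(\mathcal{F}_j)$. The ergodic sums then decompose as
$$\sum_{j<n}\phi\circ F^j=\sum_{j<n}\psi\circ F^j+u\circ F^n-u.$$
The boundary term has $L^2$ norm at most $2\|u\|_2$, so after dividing by $\sqrt n$ it tends to $0$ in probability. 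The limit laws for $\phi$ and $\psi$ therefore coincide.

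\textbf{CLT and variance.} I will apply the Billingsley--Ibragimov CLT for stationary ergodic (reverse) martingale difference sequences. This can be done either by reversing time on blocks $\{0,\dots,n-1\}$ or by verifying the Lindeberg and conditional-variance conditions directly via Birkhoff's theorem applied to $\psi^2$ and $\mathbb{E}(\psi^2\mid\mathcal{F}_1)$. This gives convergence of $n^{-1/2}\sum_{j<n}\psi\circ F^j$ to $N(0,\|\psi\|_2^2)$. Convergence on Borel sets $A$ whose boundary is Lebesgue-null is the portmanteau theorem.

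It remains to identify $\|\psi\|_2^2=\sigma_\phi^2$. First, $\int\phi\,(\phi\circ F^j)\,d\mu=\int P^j\phi\cdot\phi\,d\mu$ is bounded by $\|P^j\phi\|_2\|\phi\|_2$, which is summable, so the series defining $\sigma_\phi^2$ converges absolutely. Second, orthogonality of the martingale differences gives $\frac1n\|S_n\psi\|_2^2=\|\psi\|_2^2$. Third, a Ces\`aro computation shows $\frac1n\|S_n\phi\|_2^2\to\sigma_\phi^2$. Since $S_n\phi-S_n\psi=u\circ F^n-u$ is bounded in $L^2$, the two normalized variances have the same limit, so $\|\psi\|_2^2=\sigma_\phi^2$.

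\textbf{Degenerate case.} If $\sigma_\phi=0$, then $\psi=0$, so $\phi=Uu-u=u\circ F-u$ is a coboundary. Conversely, if $\phi=v\circ F-v$ with $v\in L^2(\mu)$, then $S_n\phi$ is bounded in $L^2$, which forces $\sigma_\phi^2=\lim\frac1n\|S_n\phi\|_2^2=0$.

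I expect the main obstacle to be the martingale CLT step for a reverse filtration, namely checking the conditional-variance convergence $\frac1n\sum_{j<n}\mathbb{E}(\psi^2\circ F^j\mid\mathcal{F}_{j+1})\to\|\psi\|_2^2$. I would derive it from ergodicity of $\mu$ via Birkhoff's theorem applied to $\mathbb{E}(\psi^2\mid\mathcal{F}_1)=UP(\psi^2)$, together with a truncation argument for the Lindeberg condition.
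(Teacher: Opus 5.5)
You are right that the printed statement cannot hold without assuming $\phi\in L^2(\mathcal{F}_0)$. For $\mathcal{F}_0=\{\emptyset,\Sigma\}$ the summability hypothesis is vacuous, since $\mathbb{E}(\phi\mid\mathcal{F}_n)=\int\phi\,d\mu=0$, and an irrational rotation with $\phi=\cos 2\pi x$ already violates the absolute convergence of the variance series. The paper gives no proof and only cites Viana's notes; your martingale--coboundary route is the standard one.

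The genuine gap is your basic identity $\mathbb{E}(\xi\mid\mathcal{F}_n)=U^nP^n\xi$, with $P$ the adjoint of $U$ on all of $L^2(\mu)$. You checked the integral identity against $\eta_0\circ F^n$, but not measurability. $U^nP^n\xi$ is $F^{-n}\mathcal{F}$-measurable, and it is $\mathcal{F}_n$-measurable only if $P^n\xi$ is $\mathcal{F}_0$-measurable. That requires $P$ to preserve $L^2(\mathcal{F}_0)$, which fails in general. The correct formula is $\mathbb{E}(\xi\mid\mathcal{F}_n)=U^n\,\mathbb{E}(P^n\xi\mid\mathcal{F}_0)$, which gives only $\|\mathbb{E}(\phi\mid\mathcal{F}_n)\|_2\le\|P^n\phi\|_2$. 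In fact $\sum_n\|P^n\phi\|_2<\infty$ is Gordin's condition for the larger filtration $F^{-n}\mathcal{F}$. So as written your argument covers only the case $\mathcal{F}_0=\mathcal{F}$, where your standing assumption is automatic.

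The failure occurs in the paper's own IFS example with pairwise disjoint $I_i$. There $F$ is a bijection of $\Sigma_A^+\times\Lambda$, where $\Lambda$ is the attractor carrying the Hutchinson measure. Hence $U$ is unitary, $P=U^{-1}$, and $\|P^n\phi\|_2=\|\phi\|_2$, so $u=\sum_{n\ge1}P^n\phi$ diverges for every $\phi\neq 0$. Yet for $\phi=h\circ\pi_1$ with $h$ Lipschitz and $\int h\,dm=0$, which is $\mathcal{F}_0$-measurable, one has $\|\mathbb{E}(\phi\mid\mathcal{F}_n)\|_2=\|P_\sigma^n h\|_{L^2(m)}$, which is exponentially small. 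The same misidentification reappears in your bound $|\int\phi\,(\phi\circ F^j)\,d\mu|\le\|P^j\phi\|_2\|\phi\|_2$ and in $\mathbb{E}(\psi^2\mid\mathcal{F}_1)=UP(\psi^2)$.

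The repair is to work inside the Hilbert space $L^2(\mathcal{F}_0)$. Since $U$ maps $L^2(\mathcal{F}_0)$ into $L^2(\mathcal{F}_1)\subset L^2(\mathcal{F}_0)$, let $P_0$ be the adjoint of $U|_{L^2(\mathcal{F}_0)}$, that is, $P_0\xi=\mathbb{E}(P\xi\mid\mathcal{F}_0)$. Then:
\begin{itemize}
\item $P_0U=\mathrm{Id}$ on $L^2(\mathcal{F}_0)$;
\item $P_0^n\xi$ is $\mathcal{F}_0$-measurable and $\mathbb{E}(\xi\mid\mathcal{F}_n)=U^nP_0^n\xi$;
\item consequently $\|P_0^n\phi\|_2=\|\mathbb{E}(\phi\mid\mathcal{F}_n)\|_2$.
\end{itemize}
Set $u:=\sum_{n\ge1}P_0^n\phi$ and $\psi:=\phi+u-Uu$, both in $L^2(\mathcal{F}_0)$. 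Then your argument goes through unchanged: the identity $P_0\psi=0$, the reverse-martingale property, the telescoping, the variance identification and the coboundary characterization. The correlation bound becomes $|\int\phi\,(\phi\circ F^j)\,d\mu|\le\|\mathbb{E}(\phi\mid\mathcal{F}_j)\|_2\|\phi\|_2$. For the Billingsley--Ibragimov step, use $\mathbb{E}(\psi^2\circ F^j\mid\mathcal{F}_{j+1})=\mathbb{E}(\psi^2\mid\mathcal{F}_1)\circ F^j$, which follows from invariance of $\mu$ and $F^{-j}\mathcal{F}_1=\mathcal{F}_{j+1}$.

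Your caveat about the application is also well placed. For the paper's $\mathcal{F}_0$, whose sets are unions of whole fibers, a Lipschitz $\varphi$ on $\Sigma_A^+\times K$ is generally not $\mathcal{F}_0$-measurable. So the assumption cannot simply be verified there; one first needs a coboundary reduction to a fiber-constant observable, for instance using the fiber contraction (G1).
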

\subsection{Application: The Central Limit Theorem}

To apply Gordin's Theorem to our skew-product system, we utilize the specific filtration of $\sigma$-algebras $\{\mathcal{F}_n\}_{n \geq 0}$ introduced in Section \ref{f0}. The following lemma shows that the exponential decay of correlations established in Theorem \ref{athmc} implies the required summability of the conditional expectations.

\begin{lemma}\label{previous}
    For every Lipschitz continuous function $\phi \in \ho(\Sigma)$ satisfying $\int \phi \, d\mu_0 = 0$, there exists a constant $R = R(\phi) > 0$ such that:
    \begin{equation*}
    \|\mathbb{E}(\phi \mid \mathcal{F}_n)\|_2 \leq R \tau_3^n, \quad \text{for all } n \geq 0,
    \end{equation*}
    where $\tau_3 \in (0, 1)$ is the constant from Theorem \ref{athmc}.
\end{lemma}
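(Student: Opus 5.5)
The plan is to use the standard duality trick: the $L^2$ norm of a conditional expectation is attained by testing against unit vectors of the subspace. Since $\mathbb{E}(\phi\mid\mathcal{F}_n)$ is the orthogonal projection onto $L^2(\mathcal{F}_n)$, we have
\begin{equation*}
\|\mathbb{E}(\phi\mid\mathcal{F}_n)\|_2=\sup\left\{\int \xi\,\phi\,d\mu_0 : \xi\in L^2(\mathcal{F}_n),\ \|\xi\|_2\le 1\right\}.
\end{equation*}
By the characterization recalled before Gordin's Theorem, each such $\xi$ can be written as $\xi=\psi\circ F^n$ with $\psi$ $\mathcal{F}_0$-measurable. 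By invariance of $\mu_0$, we get $\|\psi\|_{L^2(\mu_0)}=\|\xi\|_2\le 1$, so $\psi\in L^2_{\mu_0}(\mathcal{F}_0)\subset L^1_{\mu_0}(\mathcal{F}_0)$. Since $\int\phi\,d\mu_0=0$, the integral $\int(\psi\circ F^n)\phi\,d\mu_0$ is exactly the correlation in Theorem \ref{athmc}, which is therefore bounded by $\tau_3^n D(\psi,\phi)$. We also need to check that the filtration is non-increasing. The map $F^{-1}(A\times K)=\sigma^{-1}(A)\times K$ shows that $\mathcal{F}_1\subset\mathcal{F}_0$, and this inclusion iterates.

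The main obstacle is uniformity. Theorem \ref{athmc} gives a constant $D(\psi,\phi)$ with unspecified dependence on $\psi$, but we need a single bound $R(\phi)$ valid over the whole unit ball of $L^2(\mathcal{F}_0)$. To get it, we reopen the proof of Theorems \ref{athmc} and \ref{çljghhjçh}. There $\psi$ is a function of the base only, so $\psi\in L^2(m)$ with $\|\psi\|_{L^2(m)}\le 1$. The correlation reduces to $\int(\psi\circ\sigma^n)\,s\,dm$, where $s$ is the fiber average from Lemma \ref{uiytirut}; it is Lipschitz and satisfies $\int s\,dm=\int\phi\,d\mu_0=0$. By duality with $P_\sigma$, this equals $\int\psi\,P_\sigma^n s\,dm$. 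Since $\int s\,dm=0$, we have $s\in\ker\Pi_\sigma$, and the spectral gap bound $\|P_\sigma^n s\|_\theta\le D r^n\|s\|_\theta$ applies. Combining this with Cauchy–Schwarz and $|\cdot|_{L^2(m)}\le|\cdot|_\infty$ gives
\begin{equation*}
\left|\int(\psi\circ F^n)\phi\,d\mu_0\right|\le \|\psi\|_{L^2(m)}\,|P_\sigma^n s|_\infty\le D r^n\|s\|_\theta .
\end{equation*}
So the dependence on $\psi$ is only through its $L^1$ (or $L^2$) norm. Here $\|s\|_\theta$ is controlled by the estimate in Lemma \ref{uiytirut} in terms of $\|\phi\|_\infty$, $L_\Sigma(\phi)$ and $|\mu_0|_\theta\le C_1/(1-\theta)$ (Theorem \ref{regg}).

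Taking the supremum over $\xi$ then yields $\|\mathbb{E}(\phi\mid\mathcal{F}_n)\|_2\le R\,\tau_3^n$, with $\tau_3=r$ (or the constant of Theorem \ref{athmc}) and $R:=D\|s\|_\theta$. The case $n=0$ is covered by the same bound, or trivially by $R\ge\|\phi\|_2$ after enlarging $R$.
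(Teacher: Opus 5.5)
Your proposal is correct and follows the paper's argument. Both use the dual characterization of $\|\mathbb{E}(\phi\mid\mathcal{F}_n)\|_2$, write the test functions as $\psi\circ F^n$ with $\psi\in L^2(\mathcal{F}_0)\subset L^1_{\mu_0}(\mathcal{F}_0)$, and then apply the decay of correlations of Theorem \ref{athmc}. The only difference is that you actually prove the uniformity over the unit ball, which the paper merely asserts (``$D(\psi,\phi)$ depends linearly on the norm of $\psi$''), by reducing to $\int \psi\, P_\sigma^n s\, dm$ with $\int s\,dm=0$ and invoking the spectral gap of $P_\sigma$; this makes $R=D\|s\|_\theta$ and the rate $r$ explicit.
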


\begin{proof}
    Recall that by Theorem \ref{athmc}, for any $\psi \in L^1_{\mu_0}(\mathcal{F}_0)$ and $\phi \in \ho(\Sigma)$ with zero mean, we have the correlation bound $|\int (\psi \circ F^n) \phi \, d\mu_0| \leq D(\psi, \phi) \tau_3^n$. 
    
    Using the duality property of $L^2$ and the fact that $\xi \in L^2(\mathcal{F}_n)$ if and only if $\xi = \psi \circ F^n$ for some $\psi \in L^2(\mathcal{F}_0)$, we can express the norm of the conditional expectation as:
    \begin{align*}
        \|\mathbb{E}(\phi \mid \mathcal{F}_n)\|_2 &= \sup \left\{ \left| \int_\Sigma \xi \phi \, d\mu_0 \right| : \xi \in L^2(\mathcal{F}_n), \|\xi\|_2 = 1 \right\} \\
        &= \sup \left\{ \left| \int_\Sigma (\psi \circ F^n) \phi \, d\mu_0 \right| : \psi \in L^2(\mathcal{F}_0), \|\psi\|_2 = 1 \right\}.
    \end{align*}
    Since $\|\psi\|_1 \leq \|\psi\|_2$ by Jensen's inequality, any $\psi$ with $\|\psi\|_2 = 1$ also satisfies $\psi \in L^1_{\mu_0}(\mathcal{F}_0)$. Applying Theorem \ref{athmc}, the term inside the supremum is bounded by $D(\psi, \phi) \tau_3^n$. Since $D(\psi, \phi)$ depends linearly on the norm of $\psi$, the supremum is finite and bounded by $R(\phi) \tau_3^n$.
\end{proof}

We now state the main statistical result for the unperturbed system.

\begin{athm}[\textbf{Central Limit Theorem}] \label{central}
    Suppose that $F: \Sigma \to \Sigma$ satisfies (G1) and (G2), and let $\mu_0$ be the unique $F$-invariant probability measure in $S^\infty$. For any Lipschitz continuous observable $\varphi \in \ho(\Sigma)$, let $\phi = \varphi - \int \varphi \, d\mu_0$. Define the asymptotic variance:
    \begin{equation*}
    \sigma_\varphi^2 := \int_\Sigma \phi^2 \, d\mu_0 + 2 \sum_{j=1}^{\infty} \int_\Sigma \phi \cdot (\phi \circ F^j) \, d\mu_0.
    \end{equation*}
    Then $\sigma_\varphi^2 < \infty$, and $\sigma_\varphi^2 = 0$ if and only if $\phi = u \circ F - u$ for some $u \in L^2(\mu_0)$. Moreover, if $\sigma_\varphi^2 > 0$, then for every interval $A \subset \mathbb{R}$:
    \begin{equation*}
    \lim_{n \to \infty} \mu_0 \left( z \in \Sigma : \frac{1}{\sqrt{n}} \sum_{j=0}^{n-1} \left( \varphi(F^j(z)) - \int \varphi \, d\mu_0 \right) \in A \right) = \frac{1}{\sigma_\varphi \sqrt{2\pi}} \int_A e^{-\frac{t^2}{2\sigma_\varphi^2}} \, dt.
    \end{equation*}
\end{athm}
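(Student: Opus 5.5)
The plan is to deduce Theorem \ref{central} directly from Gordin's Theorem \ref{TeoremaDeGordin}, applied to $(\Sigma,\mathcal{B}(\Sigma),\mu_0)$ with the sub-$\sigma$-algebra $\mathcal{F}_0$ of Section \ref{f0} and the observable $\phi=\varphi-\int\varphi\,d\mu_0$. This requires checking four things, which I will do in order: $\phi\in L^2(\mu_0)$ with zero mean; $\mathcal{F}_n=F^{-n}(\mathcal{F}_0)$ is non-increasing; $\mu_0$ is ergodic; and $\sum_n\|\mathbb{E}(\phi\mid\mathcal{F}_n)\|_2<\infty$.

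\textbf{Mean, integrability and the filtration.} The first condition is immediate: $\varphi$ is Lipschitz on the compact space $\Sigma$, hence bounded, and subtracting its mean keeps $\phi\in\ho(\Sigma)$. For the filtration, $\mathcal{F}_0$ is (up to the generator chosen) $\pi_1^{-1}(\mathcal{B}(\Sigma_A^+))$. Since $\pi_1\circ F=\sigma\circ\pi_1$, we get
$$F^{-1}(\pi_1^{-1}B)=\pi_1^{-1}(\sigma^{-1}B).$$
Hence $\mathcal{F}_1=\pi_1^{-1}(\sigma^{-1}\mathcal{B})\subset\mathcal{F}_0$. Applying $F^{-n}$ to this inclusion gives $\mathcal{F}_{n+1}\subset\mathcal{F}_n$ for all $n$.

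\textbf{Summability.} This is exactly Lemma \ref{previous}, which gives
$$\|\mathbb{E}(\phi\mid\mathcal{F}_n)\|_2\le R(\phi)\tau_3^n.$$
Summing the geometric series yields $\sum_n\|\mathbb{E}(\phi\mid\mathcal{F}_n)\|_2\le R(\phi)/(1-\tau_3)<\infty$. Gordin's theorem then gives absolute convergence of the series defining $\sigma_\varphi^2$ (so $\sigma_\varphi^2<\infty$), the coboundary characterization of $\sigma_\varphi=0$, and the Gaussian limit for every Borel set with null boundary. Every interval has a boundary of at most two points, so in particular the limit holds for intervals.

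\textbf{Ergodicity of $\mu_0$, the expected main obstacle.} I would derive it from the decay of correlations rather than from uniqueness in $S^\infty$ alone. The claim is that mixing holds for pairs $\psi\in L^2(\mu_0)$, $\varphi\in\ho(\Sigma)$, namely
$$\int(\psi\circ F^n)\varphi\,d\mu_0\to\int\psi\,d\mu_0\int\varphi\,d\mu_0.$$
Granting this, Lipschitz functions are dense in $L^2(\mu_0)$ on the compact metric space $\Sigma$, so approximating $\varphi$ upgrades the claim to mixing for all $L^2$ pairs, and mixing implies ergodicity.

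To establish the claim for Lipschitz $\varphi$ and general $\psi$, I would try the following. Start from the spectral gap (Theorem \ref{spgap}) together with Proposition \ref{5.8}, which give $\|F^{*n}(\varphi\mu_0)-(\int\varphi\,d\mu_0)\mu_0\|_\infty\to0$. The obstacle is that $\varphi\mu_0$ must lie in $S^\infty$. Its marginal density is the fiber average $s$ of Lemma \ref{uiytirut}, so this holds precisely by that lemma. Convergence in $\|\cdot\|_\infty$ then gives
$$\int\psi\,d\big(F^{*n}(\varphi\mu_0)\big)\to\int\psi\,d\mu_0\int\varphi\,d\mu_0$$
for every $\psi$ that is bounded and Lipschitz along fibers. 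By density this extends to all of $L^2(\mu_0)$.

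If this route proves delicate, I would instead use an invariant-set argument. Take an invariant set $E$ with $0<\mu_0(E)<1$; the normalized restrictions of $\mu_0$ to $E$ and to its complement are then both invariant. Since $m$ is ergodic for the aperiodic subshift, both restrictions have marginal $m$. I would then argue via the fiber contraction (G1), using Proposition \ref{5.8} applied to their difference, which has zero mass, to conclude that they coincide, forcing $\mu_0(E)\in\{0,1\}$.
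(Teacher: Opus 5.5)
Your main line is the paper's proof: Gordin's theorem with $\mathcal{F}_0=\pi_1^{-1}\mathcal{B}(\Sigma_A^+)$ and summability from Lemma~\ref{previous}. Your additional checks are sound. The inclusion $\mathcal{F}_{n+1}\subset\mathcal{F}_n$ follows from $\pi_1\circ F=\sigma\circ\pi_1$. Both ergodicity arguments also work. In the first, $\varphi\mu_0-(\int\varphi\,d\mu_0)\mu_0\in\mathcal{V}$ by Lemma~\ref{uiytirut}, and the density step is legitimate because $|\int(\psi\circ F^n)\varphi\,d\mu_0|\le\|\psi\|_2\|\varphi\|_2$ uniformly in $n$. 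In the second, a normalized restriction to an invariant set lies in $S^\infty$, since its marginal is $\sigma$-invariant and absolutely continuous, hence equal to $m$.

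The genuine gap is in the step you treat as routine, and the paper's proof has the same gap. Gordin's theorem for non-invertible maps requires $\phi$ to be $\mathcal{F}_0$-measurable. The martingale approximation rests on $\phi=\sum_{n\ge0}\bigl[\mathbb{E}(\phi\mid\mathcal{F}_n)-\mathbb{E}(\phi\mid\mathcal{F}_{n+1})\bigr]$, which uses $\mathbb{E}(\phi\mid\mathcal{F}_0)=\phi$. Theorem~\ref{TeoremaDeGordin} as stated omits this hypothesis, and without it the statement is false. Take $\mathcal{F}_0$ trivial: then every $\mathbb{E}(\phi\mid\mathcal{F}_n)$ vanishes, yet for $\phi(x)=\cos 2\pi x$ under an irrational rotation $x\mapsto x+\vartheta$ the correlations $\frac12\cos(2\pi j\vartheta)$ do not even tend to zero.

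In your setting, $\mathcal{F}_0$ consists of unions of whole fibers (Proposition~\ref{nvbvdjkf}). So $\phi$ is not $\mathcal{F}_0$-measurable unless $\varphi$ is essentially constant on fibers. Concretely, let $s$ be the fiber average of Lemma~\ref{uiytirut} and $\bar s=s-\int s\,dm$. Then $\mathbb{E}(\phi\mid\mathcal{F}_n)=\mathbb{E}(\bar s\circ\pi_1\mid\mathcal{F}_n)$ for all $n\ge0$. Lemma~\ref{previous} therefore sees only $\mathbb{E}(\phi\mid\mathcal{F}_0)$ and says nothing about $\phi-\mathbb{E}(\phi\mid\mathcal{F}_0)$. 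As it stands, the argument proves the CLT only for Lipschitz observables depending on $\underline{x}$ alone.

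Taking $\mathcal{F}_0=\mathcal{B}(\Sigma)$ instead does not rescue the argument. In Example~\ref{kjfhjsfg} the $I_i$ are disjoint, so $F$ is injective, $F^{-n}\mathcal{B}(\Sigma)=\mathcal{B}(\Sigma)$, and $\mathbb{E}(\phi\mid\mathcal{F}_n)=\phi$ for every $n$. The contracting fiber coordinate records the past of the symbolic orbit, and the missing idea is to exploit this, for instance by passing to the natural extension of $(\Sigma_A^+,\sigma,m)$, the two-sided Markov shift $(\hat\sigma,\hat m)$. For $\hat x=(x_i)_{i\in\mathbb{Z}}$ put $\hat x^{(k)}=(x_{-k},x_{-k+1},\dots)\in\Sigma_A^+$ and
\[
h(\hat x)=\Bigl(\hat x^{(0)},\ \lim_{n\to\infty}G(\hat x^{(1)},\cdot)\circ\cdots\circ G(\hat x^{(n)},\cdot)(y_0)\Bigr).
\]
This map has the properties you need:
\begin{itemize}
\item By (G1) the limit exists and is independent of $y_0$.
\item It satisfies $F\circ h=h\circ\hat\sigma$.
\item The measure $h_*\hat m$ is an $F$-invariant probability with marginal $m$, so it lies in $S^\infty$ and equals $\mu_0$ by Theorem~\ref{probun}.
\item By (G1)--(G2), $\varphi\circ h$ is H\"older.
\end{itemize}
The distributional CLT for $\varphi$ then follows from the classical CLT for H\"older observables of a mixing two-sided subshift of finite type. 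Reduce to a one-sided observable by Sinai's cohomology lemma, then apply Gordin on $\Sigma_A^+$ with the full Borel $\sigma$-algebra and the spectral gap of $P_\sigma$. The coboundary characterization needs a further argument to push the transfer function down through $h$.
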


\begin{proof}
    By Lemma \ref{previous}, the sequence of conditional expectations is exponentially small in $L^2$, which implies that $\sum_{n=0}^\infty \|\mathbb{E}(\phi \mid \mathcal{F}_n)\|_2 < \infty$. Since the measure $\mu_0$ is ergodic (as it is the unique invariant measure in $S^\infty$), all hypotheses of Gordin's Theorem (Theorem \ref{TeoremaDeGordin}) are satisfied. This completes the proof.
\end{proof}


\begin{thebibliography}{99}
\bibitem{ABV}
\newblock  J. F. Alves, C. Bonatti, and M. Viana,
\newblock SRB measures for partially hyperbolic systems whose central direction is mostly expanding,
\newblock \emph{Inventiones mathematicae}, \textbf{140}(2) (2000), 351--398.


\bibitem{PB} [10.1142/3657]
\newblock V. Baladi,
\newblock \emph{Positive Transfer Operators and Decay of Correlations},
\newblock  World Scientific, London, 2000. 

	\bibitem{RRR}[10.1088/1361-6544/aba888]
	\newblock  R. Bilbao, R. Bioni and R. Lucena,
	\newblock Holder Regularity and Exponential Decay of Correlations for a class of Piecewise Partially Hyperbolic Maps,
	\newblock \emph{Nonlinearity}, \textbf{33} (2020).



    
\bibitem{RRRSTAB}
\newblock  R. Bilbao, R. Bioni and R. Lucena,
\newblock Quantitative statistical stability for the equilibrium states of piecewise partially hyperbolic maps,
\newblock \emph{Discrete and Continuous Dynamical Systems}, \textbf{44}(3) (2024).\url{https://doi.org/10.3934/dcds.2023129}


\bibitem{BM}
	\newblock O. Butterley and I. Melbourne,
	\newblock Disintegration of Invariant
	Measures for Hyperbolic Skew Products,
	\newblock preprint, arXiv{1503.04319}. 

\bibitem{RR}
\newblock  R. A. Bilbao, R. Lucena,
\newblock Thermodynamic formalism for discontinuous maps and statistical properties for their equilibrium states, 
\newblock \emph{Chaos, Solitons \& Fractals}, \textbf{207} (2026).

\bibitem{GP} [10.1017/S0143385709000856]
\newblock S. Galatolo and M. J. Pacifico, 
	\newblock Lorenz-like flows: Exponential decay of correlations for the Poincaré map, logarithm law, quantitative recurrence,
	\newblock \emph{Ergodic Theory and Dynamical Systems}, \textbf{30} (2010), 1703--1737.


\bibitem{GLu} [10.3934/dcds.2020079]
	\newblock S. Galatolo and R. Lucena, 
	\newblock Spectral Gap and quantitative statistical stability for systems with contracting fibers and Lorenz like maps,
	\newblock \emph{Discrete and Continuous Dynamical Systems}, \textbf{40}(3) (2020). 


	\bibitem{Hc} [10.1512/iumj.1981.30.30055]
	\newblock J. E. Hutchinson, 
	\newblock Fractals and self-similarity,
	\newblock \emph{Indiana Univ. Math. J.}, \textbf{30} (1981), 713--747.

\bibitem{KL} 
\newblock G. Keller and C. Liverani, 
\newblock Stability of the spectrum for transfer operators,
\newblock \emph{Annali della Scuola Normale Superiore di Pisa. Classe di Scienze. Serie IV}, \textbf{28} (1999), 141--152.



\bibitem{ben} [10.1017/etds.2020.22]
	\newblock B. R. Kloeckner, 
	\newblock Extensions with shrinking fibers,
	\newblock \emph{Ergodic Theory and Dynamical Systems}, (2020), 1-40.


\bibitem{DR}
\newblock  D. Lima, R. Lucena,
\newblock Lipschitz regularity of the invariant measure of random dynamical systems,
\newblock \emph{J. Fixed Point Theory Appl.}, \textbf{27}(44) (2025).\url{https://doi.org/10.1007/s11784-025-01196-1}


	\bibitem{L}
	\newblock R. Lucena,
	\newblock  \emph{Spectral Gap for Contracting Fiber Systems and Applications},
	\newblock  Ph.D thesis, Universidade Federal do Rio de Janeiro in Brazil, 2015.
	

	\bibitem{Kva}[10.1017/CBO9781316422601] 
	\newblock K. Oliveira and M. Viana,
	\newblock \emph{Foundations of Ergodic Theory},
	\newblock  Cambridge University Press, February 2016. 
    \url{https://doi.org/10.1017/CBO9781316422601}



\bibitem{RE} 
\newblock J. Rousseau-Egele, 
\newblock Un Theoreme de la Limite Locale Pour une Classe de Transformations Dilatantes et Monotones par Morceaux, \newblock \emph{The Annals of Probability}, \textbf{11} (1983), 772--788.


\bibitem{Stoc}
\newblock M.Viana,
\newblock Stochastic dynamics of deterministic systems,
\newblock {\em Col\'oquio Brasileiro de Matem\'atica}, 
1997.
    
\end{thebibliography}
\end{document}